\def\O{\operatorname{O}}
\theoremstyle{plain}
\newtheorem{theorem}{Theorem}[section] 
\newtheorem{lemmy}[theorem]{Lemma}
\newtheorem{prop}[theorem]{Proposition}
\newtheorem{cor}[theorem]{Corollary}
\theoremstyle{remark}
\newtheorem{rem}[theorem]{Remark}
\theoremstyle{definition}
\newtheorem{defn}{Definition}[section]
\newcommand{\A}{\mathbb{A}}
\newcommand{\R}{\mathbb{R}}
\newcommand{\C}{\mathbb{C}}
\newcommand{\Z}{\mathbb{Z}}
\newcommand{\Q}{\mathbb{Q}}
\newcommand{\GL}{\operatorname{GL}}
\newcommand{\SL}{\operatorname{SL}}
\newcommand{\Mat}{\operatorname{Mat}}
\newcommand{\U}{\operatorname{U}}
\newcommand{\N}{\mathbb{N}}
\renewcommand{\Re}{\operatorname{Re}}
\newcommand{\Hom}{\operatorname{Hom}}
\newcommand{\diag}{\operatorname{diag}}
\newcommand{\reg}{\operatorname{reg}}
\title{Local analysis of the Kuznetsov formula and the density conjecture}
\author{Edgar Assing}
\author{Valentin Blomer}
\author{Paul D. Nelson}
\address{Mathematisches Institut, Endenicher Allee 60, 53115 Bonn}
\email{blomer@math.uni-bonn.de}
\email{assing@math.uni-bonn.de}
\address{Aarhus University, Ny Munkegade 118
  DK-8000 Aarhus, Denmark}
\email{paul.nelson@math.au.dk}
\begin{document}

\begin{abstract}
  We prove Sarnak's spherical density conjecture for the principal congruence subgroup of ${\rm SL}(n, \mathbb{Z})$ of arbitrary level.  Applications include a complete version of Sarnak's optimal lifting conjecture for principal congruence subgroups of ${\rm SL}(n, \mathbb{Z})$, as well as a transfer of the density theorem to certain co-compact situations.  The main ingredients are new lower bounds for Whittaker functions and strong estimates for the cardinality of ramified Kloosterman sets. 
\end{abstract}

\subjclass[2010]{Primary:  11F72, 11L05}
\keywords{Exceptional eigenvalues, density hypothesis, Kloosterman sums, Whittaker functions, local representations, Kuznetsov formula}

\thanks{The first two authors are supported in part by Germany's Excellence Strategy grant EXC-2047/1 - 390685813 and also   by SFB-TRR 358/1 2023 - 491392403. The second author is additionally supported in part by ERC Advanced Grant 101054336.  The third author is supported by a research grant (VIL54509) from VILLUM FONDEN.  Some work towards this paper was carried out while the third author was at the Institute for Advanced Study, supported by the National Science Foundation under Grant No. DMS-1926686}

\setcounter{tocdepth}{2} 	

\maketitle	

\section{Introduction}\label{sec:cnfm30cc81}

Sarnak's density hypothesis is a robust quantitative statement that can often replace the generalized Ramanujan conjecture in applications.  While proving the full Ramanujan conjecture remains out of the reach, there has been much recent progress towards the density hypothesis.  In this paper, we obtain a definitive result for principal congruence subgroups of ${\rm SL}_n( \mathbb{Z})$, extending the squarefree level case treated in \cite{AB}, as well as certain co-compact analogues.
Experience has shown that the properties of congruence subgroups and their associated automorphic forms display quite different features depending on whether the level is squarefree (``horizontal direction'') or a high $p$-power (``vertical direction'') and the analysis often requires a different set of tools. The present case is no exception, and in treating the horizontal and vertical direction simultaneously, we will face new representation-theoretic and combinatorial challenges.





\subsection{Statement of the main results}\label{sec:cnfm30cesk}
In what follows, we work with a level $q$ (a natural number) and a place $v$ of $\mathbb{Q}$ not dividing $q$, and consider automorphic forms $\varpi$ contributing to the spectral decomposition of $L^2(\Gamma(q) \backslash {\rm SL}_n(\mathbb{R})/{\rm SO}_n(\mathbb{R})),$ where $\Gamma(q) := \ker[\mathrm{SL}_n(\Z)\to \mathrm{SL}_n(\Z/q\Z)]$ is the principal congruence subgroup.  We assume that $\varpi$ is an eigenfunction of the invariant differential operators and, when $v$ is finite, of the Hecke operators defined at $v$.  In either case, the eigenvalues of $\varpi$ at $v$ are described by a multiset of Langlands parameters $\{\mu_{\varpi,v}(j) \mid 1 \leq j \leq n\}$, which we normalize to be purely imaginary when $\varpi$ is tempered.  We set $\sigma_{\varpi,v} := \max_j |\Re \mu_{\varpi,v}(j)|$.  In the following, the terms ``cusp form'' and ``automorphic form'' will include the property of being an eigenfunction at $v$ and $\infty$.  Given a finite family $\mathcal{F}$ of such $\varpi$ and $\sigma \geq 0$, we define
\begin{equation*}
  N_v(\sigma, \mathcal{F}) := \#\{ \varpi \in \mathcal{F} \mid  \sigma_{\varpi,v} \geq \sigma\},
\end{equation*}
which quantifies the failure of temperedness at $v$ in $\mathcal{F}$.

The Ramanujan conjecture predicts that if $\mathcal{F}$ consists of cusp forms, then $N_v(\sigma, \mathcal{F}) = 0$ when $\sigma > 0$.  This prediction does not generalize: the constant function $\varpi = 1$ satisfies $\sigma_{\varpi, v} = \frac{n - 1}{2}$ for every $v$, and for more general groups, it is known that $\sigma_{\varpi, v}$ can be positive even when $\varpi$ is cuspidal.  Sarnak's density hypothesis proposes the following more robust estimate
\begin{equation}\label{eq:cng2lgcnhy}
  N_v(\sigma, \mathcal{F}) \ll_{n,\epsilon} (\# \mathcal{F}  )^{1 - \frac{2 \sigma}{ n - 1} + \epsilon }
\end{equation}
for all reasonable families $\mathcal{F}$.  This may be understood as interpolating linearly between the trivial bound $\# \mathcal{F}$ and the upper bound $\O(1)$ for $\sigma = \frac{n - 1}{2}$, which is sharp when $\mathcal{F}$ contains a constant function.

We generalize and strengthen the main theorem from \cite{AB}, as follows: 
\begin{theorem}\label{th_mt}
  Let $M,\epsilon>0$.  Let $q\in \mathbb{N}$ be arbitrary.  Fix a place $v$ of $\mathbb{Q}$, and if $v = p$ is finite, assume that $p \nmid q$.  Let $\mathcal{F}_{\mathrm{cusp}} = \mathcal{F}_{\Gamma(q),\mathrm{cusp}}(M)$ be a maximal orthogonal collection of cusp forms $\varpi$ for $\Gamma(q) \subseteq \mathrm{SL}_n(\mathbb{Z})$ and with archimedean spectral parameter $\Vert \mu\Vert \leq M$.  There exists a constant $K$, depending only on $n$, such that for each $\sigma \geq 0$, we have
  \begin{equation*}
    N_v(\sigma,\mathcal{F}_{\mathrm{cusp}}) \ll _{v,\epsilon,n} M^K \cdot [\mathrm{SL}_n(\mathbb{Z})\colon \Gamma(q)]^{1-\frac{2\sigma}{n-1}(1+\frac{1}{n+1})+\epsilon}.
  \end{equation*}
\end{theorem}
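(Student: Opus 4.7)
The plan is to deduce the bound from an amplified Kuznetsov-type relative trace formula on $\mathrm{SL}_n$ at level $\Gamma(q)$. Schematically,
$$\sum_{\varpi \in \mathcal{F}_{\mathrm{cusp}}} h(\mu_{\varpi,\infty}) \, \alpha_v(\varpi) \, \frac{|a_\varpi(1)|^2}{\langle \varpi, \varpi \rangle} \;=\; \Delta + \sum_{w \in W} \sum_{c} \mathrm{KL}_w(\psi,\psi';c) \, \widetilde{h}_w(c),$$
where $\Delta$ collects the diagonal and degenerate Eisenstein contributions, $h$ is an archimedean spectral localizer at $\|\mu\|\leq M$ (absorbed into the $M^K$), and the auxiliary amplifier $\alpha_v$ --- a short Hecke polynomial when $v=p$ is finite, or an unramified spectral multiplier when $v=\infty$ --- is designed so that $\alpha_v(\varpi) \gg L^{2\sigma-\epsilon}$ whenever $\sigma_{\varpi,v} \geq \sigma$, while comprising $O(L^{O(1)})$ elementary terms. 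Discarding tempered contributions by positivity of $|a_\varpi(1)|^2$ then yields a lower bound for $N_v(\sigma,\mathcal{F}_{\mathrm{cusp}})$ from the spectral side.

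To convert this into a count of forms, I would invoke the new lower bounds for Whittaker functions promised in the abstract, obtaining uniformly
$$\frac{|a_\varpi(1)|^2}{\langle \varpi,\varpi\rangle} \;\gg_{M}\; [\mathrm{SL}_n(\mathbb{Z}):\Gamma(q)]^{-1}$$
after choosing, for each $\varpi$, a test vector adapted to the local representation at every ramified prime $p\mid q$. This step is delicate in the vertical regime (where $q$ carries large $p$-powers), because the new-vector Whittaker function can degenerate; one is forced to replace spherical or new-vector choices by carefully tailored vectors, which is the first of the two main representation-theoretic inputs of the paper.

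On the geometric side, the Kloosterman sums attached to each Weyl cell must be controlled using the strong estimates for the cardinality of ramified Kloosterman sets, the second key input. A trivial count of admissible Plücker moduli would lose powers of $q$ incompatible with the density exponent at non-squarefree level; the improvement instead exploits uniform combinatorial bounds on the ramified strata of $\Gamma(q)\backslash\mathrm{SL}_n(\mathbb{Z})/\Gamma(q)$, treated prime by prime and then reassembled. Matching the two sides and optimizing $L$ produces the target exponent $1-\frac{2\sigma}{n-1}\bigl(1+\frac{1}{n+1}\bigr)+\epsilon$, with the factor $1+\frac{1}{n+1}$ reflecting the diagonal-versus-off-diagonal gain from the root system of $\mathrm{SL}_n$ (as in the squarefree case of \cite{AB}). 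I expect the main obstacle to be the geometric estimate at primes with large exponent in $q$: the ramified Kloosterman sets do not factor over squarefree moduli, so obtaining a bound that simultaneously covers horizontal and vertical $q$ demands a unified analysis of the Bruhat fibers at every such prime, and this combinatorial challenge --- flagged explicitly in the introduction --- is the genuine novelty over \cite{AB}.
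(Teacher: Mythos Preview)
Your architecture is correct and matches the paper: Kuznetsov formula for $\Gamma(q)^{\natural}$, a Whittaker/Fourier-coefficient lower bound on the spectral side (Proposition~\ref{pro_four_coeff}), Kloosterman-set estimates on the geometric side, and optimization in an amplification parameter (your $L$, the paper's $Z$). Two points, however, are stated in a way that would not quite close.

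First, the Whittaker input is not a pointwise bound of the shape $|a_\varpi(1)|^2/\langle\varpi,\varpi\rangle \gg_M [\mathrm{SL}_n(\mathbb{Z}):\Gamma(q)]^{-1}$ obtained by choosing, form by form, an adapted local test vector. What the paper proves (Proposition~\ref{pro_four_coeff}) is an \emph{average} lower bound over an orthonormal basis of the full $K(q')$-fixed space of each cuspidal $\pi$, with a bounded level enlargement $q\mapsto q'=q_0 q$, namely $\sum_{\varpi\in\mathrm{ONB}(V_\pi^{(q')})}|A_\varpi(1)|^2 \gg (\|\mu_\pi\|q)^{-\epsilon}\dim(V_\pi^{(q)})\,\mathcal{N}_q/\mathcal{V}_q$. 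This is exactly what the Bessel-distribution computation (regular types in $\widehat{G_m}$) delivers; a pointwise statement is not available and not needed.

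Second, and more structurally, you plan to bound the off-diagonal for every Weyl element. The paper instead shows (Lemma~\ref{lm:divisibility}) that for $n\geq 4$ and $Z<q^{n+2}$ all Kloosterman sets except those for $w=\mathrm{id}$ and the single element $w_\ast=w_{1,n-2,1}$ are \emph{empty} by divisibility of the moduli. The genuine work on Kloosterman sets (Lemma~\ref{lem:kszeta}, Proposition~\ref{pro_geo}) is then confined to $w_\ast$, and for $Z\ll q^{n+2-\epsilon}$ its contribution is $\ll q^\epsilon\mathcal{N}_q$, matching the diagonal. Choosing $Z\asymp q^{n+2}$ and using $\mathcal{V}_q=q^{n^2-1+o(1)}$ gives the exponent $2\sigma(n+2)/(n^2-1)=\tfrac{2\sigma}{n-1}(1+\tfrac{1}{n+1})$. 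So the factor $1+\tfrac{1}{n+1}$ is not a generic root-system gain; it is the precise divisibility threshold $q^{n+2}$ at which Weyl elements other than $w_\ast$ first appear. Without isolating $w_\ast$ in this way, you would have to control all cells simultaneously, which the paper explicitly flags as the current bottleneck.
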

\begin{rem}
  If $M$ is not too small, then $[\mathrm{SL}_n(\mathbb{Z})\colon \Gamma(q)] \asymp_{n, M} \# \mathcal{F}$ as $q \rightarrow \infty$, so Theorem \ref{th_mt} improves upon Sarnak's density hypothesis \eqref{eq:cng2lgcnhy} via the additional term $\frac{1}{n+1}$.  For $q$ prime, a slightly weaker improvement was obtained in \cite{BM}.
\end{rem}

The classification of the discrete spectrum by Moeglin and Waldspurger allows us to treat the residual spectrum inductively.

\begin{cor}\label{cor_disc}
  Let $M,\epsilon>0$ and let $q\in \mathbb{N}$ be arbitrary. Fix a place $v$ of $\mathbb{Q}$, and if $v = p$ is finite, assume that $p \nmid  q$. Let $\mathcal{F}_{\mathrm{disc}} = \mathcal{F}_{\Gamma(q),\mathrm{disc}}(M)$ be a maximal orthogonal collection of automorphic forms for $\Gamma(q) \subseteq  \mathrm{SL}_n(\mathbb{Z})$ appearing in the discrete spectrum with archimedean spectral parameter $\Vert \mu\Vert  \leq M$. There exists a constant $K$ depending only on $n$, such that
  \begin{equation}
    N_v(\sigma,\mathcal{F}_{\mathrm{disc}}) \ll _{v,\epsilon,n} M^K \cdot [\mathrm{SL}_n(\mathbb{Z})\colon \Gamma(q)]^{1-\frac{2\sigma}{n-1}+\epsilon}, \nonumber
  \end{equation}
  for $\sigma\geq	 0$.
\end{cor}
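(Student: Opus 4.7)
The plan is to invoke the Moeglin--Waldspurger classification of the discrete automorphic spectrum of $\GL_n$ to reduce Corollary~\ref{cor_disc} to the cuspidal case on smaller linear groups, where Theorem~\ref{th_mt} applies. Every representation occurring in the discrete spectrum of $\GL_n(\A)$ is a Speh representation $\operatorname{Speh}(\tau,b)$ attached to a factorization $n = ab$ and a unitary cuspidal automorphic representation $\tau$ of $\GL_a(\A)$. The case $b = 1$ is precisely Theorem~\ref{th_mt}, and the case $a = 1$ is trivial (Hecke characters, contributing $O(q^\epsilon)$ terms with $\sigma_{\varpi,v} = (n-1)/2$), so I may restrict to $2 \leq a < n$ and $b = n/a \geq 2$.

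For $\operatorname{Speh}(\tau,b)$ the Langlands parameters at $v$ form the multiset
\[
\left\{ \mu_{\tau,v}(i) + \tfrac{b+1}{2} - j : 1 \leq i \leq a,\ 1 \leq j \leq b \right\},
\]
from which one reads off $\sigma_{\operatorname{Speh}(\tau,b),\,v} = \sigma_{\tau,v} + (b-1)/2$. Moreover, if $\operatorname{Speh}(\tau,b)$ has non-zero vectors fixed by $\Gamma(q) \subseteq \SL_n(\Z)$, then $\tau$ occurs in the cuspidal spectrum on some $\Gamma_a(q') \subseteq \SL_a(\Z)$ with $q' \mid q$, and the archimedean bound $\Vert \mu \Vert \leq M$ on $\GL_n$ implies an analogous bound on $\GL_a$ with constants depending only on $n$. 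Justifying this bookkeeping carefully---in particular matching the principal congruence structure across the two groups via the local theory of Speh representations---is the main technical step I anticipate.

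Granted this, I would apply Theorem~\ref{th_mt} on $\GL_a$ at the shifted threshold $\sigma' := \max(0, \sigma - (b-1)/2)$, for each such pair $(a, b)$. Using $[\SL_m(\Z):\Gamma_m(q)] \asymp_m q^{m^2 - 1}$, the resulting upper bound has size $\ll_{n, v, \epsilon} M^K \cdot q^{(a^2-1)-2\sigma'(a+2)+\epsilon}$. Dominating this by the target $q^{(n^2-1)-2\sigma(n+1)+\epsilon}$ reduces to the elementary inequality
\[
2\sigma(n-a-1) + (b-1)(a+2) \;\leq\; n^2 - a^2,
\]
which is readily verified for all $\sigma \in [0, (n-1)/2]$ whenever $b \geq 2$. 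Summing over the finitely many divisors $a \mid n$ and combining with the cuspidal case yields the claim.

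I note that the extra factor $(1 + \tfrac{1}{a+1})$ in the cuspidal bound of Theorem~\ref{th_mt} is not essential for this reduction: even the plain Sarnak density hypothesis on $\GL_a$ would be enough, since for $b \geq 2$ the spectral shift $(b-1)/2$ is more than compensated by the gap $n^2 - a^2 > 0$ between the congruence indices. This is precisely why Corollary~\ref{cor_disc} recovers only the plain Sarnak exponent $1 - 2\sigma/(n-1)$, losing the $\tfrac{1}{n+1}$ gain of Theorem~\ref{th_mt}, which is particular to the cuspidal contribution.
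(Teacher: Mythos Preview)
Your approach is correct and is essentially the one the paper takes: the paper defers to \cite[Section~7]{AB}, which carries out precisely this Moeglin--Waldspurger reduction to Theorem~\ref{th_mt} on $\GL_a$ for each factorization $n=ab$. The bookkeeping you flag---controlling $\dim\operatorname{Speh}(\tau,b)^{K(q)}$ in terms of $\dim\tau^{K_a(q)}$ via the induced picture---is indeed the main point, and the substantial slack in your inequality $2\sigma(n-a-1)+(b-1)(a+2)\le n^2-a^2$ for $b\ge 2$ is exactly what absorbs the resulting polynomial-in-$q$ multiplicity factor (only trivial dimension estimates are needed, as the paper notes).
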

\begin{rem}
  The exponent in Corollary \ref{cor_disc} is sharp, in view of the contribution of the constant function to $\mathcal{F}_{\mathrm{disc}}$.
\end{rem}

By bootstrapping further, we may extend our estimates to the complete spectrum.  For the sake of applications, it is convenient to formulate this extension in terms of a weighted sum/integral over the full spectrum:

\begin{cor}\label{cor_full_spec}
  There exists a constant $K > 0$, depending only on $n$, with the following property.  Let $M, T > 1$, $q$ arbitrary, and suppose that $T \leq M^{-K} q^{n+1}$. Fix a place $v$ of $\mathbb{Q}$. If $v = p$ is finite, assume that $p \nmid q$.  Then
  \begin{equation*}
    \underset{\| \mu_{\varpi} \| \leq M}{\int_{\Gamma(q)}}  T^{2 \sigma_{\varpi,v}}  d\varpi  \ll_{v, \varepsilon, n} M^{K} q^{\varepsilon} \cdot   [\mathrm{SL}_n(\Z)\colon \Gamma(q)] .
  \end{equation*} 
\end{cor}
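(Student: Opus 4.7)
The plan is to reduce the full spectral integral to the discrete case by splitting the spectrum into its discrete and Eisenstein contributions via Langlands' spectral decomposition. The arithmetic heart of the argument is the identity
\begin{equation*}
V := [\SL_n(\Z):\Gamma(q)]^{1/(n-1)} \asymp_n q^{n+1},
\end{equation*}
so the hypothesis $T \leq M^{-K}q^{n+1}$ is precisely the threshold making the resulting dyadic integral over $\sigma \in [0,(n-1)/2]$ bounded.

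For the discrete contribution, set $N(\sigma) := N_v(\sigma,\mathcal{F}_{\mathrm{disc}})$. Riemann--Stieltjes integration by parts gives
\begin{equation*}
\sum_{\varpi \in \mathcal{F}_{\mathrm{disc}},\|\mu_\varpi\|\leq M} T^{2\sigma_{\varpi,v}} = N(0) + 2(\log T)\int_0^{(n-1)/2} T^{2\sigma} N(\sigma)\,d\sigma.
\end{equation*}
Substituting Corollary \ref{cor_disc} yields $\ll M^K[\SL_n(\Z):\Gamma(q)]^{1+\epsilon}\int_0^{(n-1)/2}(T/V)^{2\sigma}\,d\sigma$. Taking $K$ large enough, the hypothesis forces $T/V \ll 1$, the integral is $O_n(1)$, and one concludes $\ll M^K q^\epsilon [\SL_n(\Z):\Gamma(q)]$.

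For the Eisenstein contribution, the continuous spectrum is parameterized by standard Levis $L = \GL_{n_1}\times\cdots\times\GL_{n_k}$ of $\SL_n$ (with $\sum n_i = n$, $k\geq 2$), discrete automorphic representations $\pi = \otimes_i \pi_i$ on $L$ of level compatible with $\Gamma(q)$, and imaginary continuous parameters $\lambda\in i\mathfrak{a}_L^*$. Since $\lambda$ is imaginary, the Langlands parameters at $v$ of $E(\pi,\lambda)$ are unitary twists of those of the $\pi_i$, so $\sigma_{E(\pi,\lambda),v} = \max_i\sigma_{\pi_i,v}$ is independent of $\lambda$, and the $\lambda$-integration over $\|\mu_\varpi\|\leq M$ contributes a harmless volume factor $O(M^{k-1})$ absorbed into $M^K$. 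Using $\max_i \sigma_i \leq \sum_i \sigma_i$ we bound $T^{2\max_i\sigma_{\pi_i,v}} \leq \prod_i T^{2\sigma_{\pi_i,v}}$, whereupon the $\pi$-sum factors as $\prod_i \sum_{\pi_i} T^{2\sigma_{\pi_i,v}}$. Each factor is estimated by the dyadic argument above applied to $\GL_{n_i}$ (using the analog of Corollary \ref{cor_disc} in smaller rank, and a trivial character count when $n_i = 1$), yielding $\ll M^K q^\epsilon \max(V_i,T)^{n_i-1}$ with $V_i\asymp q^{n_i+1}$. Because $V_i \leq V$ and $T\leq V$, the product satisfies
\begin{equation*}
\prod_i \max(V_i,T)^{n_i-1} \leq V^{\sum_i(n_i-1)} = V^{n-k} \leq V^{n-2},
\end{equation*}
and the saved factor $V$ (made available by $k\geq 2$) absorbs the polynomial-in-$M$ losses and the finitely many associate classes of Levis.

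The main obstacle is the combinatorial bookkeeping in the Eisenstein part: one must identify precisely which congruence subgroups of $\GL_{n_i}$ govern the discrete spectra of the Levi factors induced by $\Gamma(q)$-invariance, organize the sum over associate classes of Levis compatibly, and verify that the Plancherel density in the $\lambda$-integral (encoded by intertwining operators) is bounded polynomially in $M$ of degree depending only on $n$. The inductive invocation of Corollary \ref{cor_disc} on smaller groups is routine, but the interaction between the principal congruence structure and the parabolic induction is where genuine care is needed.
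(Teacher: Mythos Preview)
Your approach is correct and coincides with the paper's, which does not spell out the argument but refers to \cite[Section~7]{AB} for the verbatim proof: split into discrete and Eisenstein parts, handle the discrete part by partial summation against Corollary~\ref{cor_disc}, and treat the Eisenstein part inductively through the Langlands decomposition over proper Levi subgroups, using that the imaginary inducing parameter does not affect $\sigma_{\varpi,v}$.

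One small imprecision worth flagging: your claimed bound $\ll M^K q^{\epsilon}\max(V_i,T)^{n_i-1}$ for the $i$th Levi block is not literally correct when $n_i=1$, since the count of Dirichlet characters of level $q$ is $\asymp q$, not $q^{\epsilon}$. Similarly, passing from $\prod_i \dim \pi_i^{K_{L_i}(q)}$ to $\dim\bigl(\mathrm{Ind}_P^G(\pi)\bigr)^{K(q)}$ introduces a flag-variety factor of size roughly $q^{\dim(G/P)}$ that must also be tracked. Both of these are harmless: the saving $V^{k-1}\geq q^{n+1}$ you extract from $k\geq 2$ dominates any such polynomial-in-$q$ loss (indeed $q^{\dim N}\cdot\prod_i q^{n_i^2-1}\ll q^{n^2-1}$ with room to spare), so after inserting these factors explicitly the argument goes through exactly as you outline. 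You correctly identify this bookkeeping as the main place requiring care.
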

Here the left hand side denotes a combined sum/integral over all spectral components $\varpi$ in $L^2(\Gamma(q) \backslash {\rm SL}_n(\mathbb{R})/{\rm SO}_n(\mathbb{R}))$ with $\| \mu_{\varpi} \| \leq M$ that are (at least) eigenfunctions at $v$ and $\infty$; cf.\ \cite[(2.18)]{AB} for the notation. 
In particular, $d\varpi$ is the counting measure on the discrete part of the spectrum. 

\subsection{The methods}\label{sec:cnfm30cgoc}
Our results generalize those of \cite[Theorem~1.1]{AB} concerning the squarefree case.  The proofs given there require certain spectral and geometric estimates, but the methods relied crucially on the squarefreeness assumption: in the spectral arguments, to reduce to local computations that involve only \emph{depth zero} supercuspidals, and in the geometric arguments, to count the number of double cosets occurring in the definition of Kloosterman sums for congruence subgroups of prime level.  The extensions in this paper require new ideas, discussed in the following paragraphs, that we have attempted to develop robustly so as to be more broadly useful:
\begin{itemize}
\item Our representation-theoretic results should be useful in any application of the $\GL_n$ Kuznetsov formula to automorphic forms of given depth.
\item Our counting results provide a prototypical example of how to bound Kloosterman sets optimally when the congruence subgroup  does not exhaust the torus of $\mathrm{SL}_n(\mathbb{Z})$. 
\end{itemize}

Starting with the spectral side, the key estimate is an average lower bound for first Fourier coefficients, which reduces to a local problem concerning Bessel distributions associated to supercuspidal representations.  We indicate the main statement in Section~\ref{sec:cnh2efy71t} and give the proof in Section~\ref{sec_f}. 
It turns out that the averaged first Fourier coefficient counts the multiplicity of certain regular types,
see Lemma~\ref{lm:red_to_mult} below for a precise statement.  Producing the desired lower bound thus amounts to showing that such regular types always exist.
To solve this representation-theoretic problem requires deep results concerning the construction of regular representations over finite rings and their multiplicity in the Gelfand--Graev representation.  The proof is inspired by ideas coming from Kirillov's orbit method, which have been relevant for several recent advances in the analytic theory of automorphic forms (cf.\  \cite{NV}).


The geometric side is estimated in Section~\ref{sec_g}.  It features sums of Kloosterman sums. The simplest bound for a Kloosterman sum is a bound for the underlying double cosets. This may be called the trivial bound  because it neglects savings from the additive characters, but it is far from trivial: we need to count the double cosets uniformly in the level, which is to say that we need to fully exploit the fact that the diagonal matrices in the principal congruence subgroup have large index in the torus of $\SL_n(\mathbb{Z})$; cf.\ the passage from \eqref{eq:triv_gen_ks} to \eqref{general_expectation}. 
For squarefree levels, an elementary   argument  in \cite{AB} based on  counting solutions to systems of congruences sufficed, but this fails even in the prime power case.  Instead, in the proof of Lemma~\ref{lem:kszeta} below, we organize the combinatorics systematically via certain $p$-adic integrals  to obtain the sharp bound \eqref{general_expectation} for the critical Weyl element $w_{\ast}$ uniformly for all moduli and all levels $q$. 




\subsection{Lower bounds for Whittaker functions and the orbit method}\label{sec:cnh2efy71t}
We have noted above that one of the two pillars of our proof is a lower bound for Fourier coefficients, established in Section~\ref{sec_f}.  Here we record one formulation of the main local result of that section, and indicate why it is natural from the perspective of the orbit method.

Let $F$ be a non-archimedean local field of characteristic zero, with ring of integers $\mathfrak{o}$ and maximal ideal $\mathfrak{p}$.  Let $\mathfrak{q} \subseteq \mathfrak{o}$ be an ideal.  Let $\psi$ be a unitary character of $F$, trivial on $\mathfrak{q}$ but not on $\mathfrak{p}^{-1} \mathfrak{q}$, and denote also by $\psi$ the character $u \mapsto \psi(\sum u_{i, i + 1})$ of the strictly upper-triangular subgroup $N \leq \GL_n(F)$.  (We caution that $\psi$ is normalized differently in Section~\ref{sec_f}.)  Let $\pi$ be a generic irreducible representation of $\pi$, realized in its $\psi$-Whittaker model.  It is easy to see that each $W \in \pi^{K(\mathfrak{q})}$ is supported on matrices whose Iwasawa $A$-component $\diag(a_1, \dotsc, a_n)$ satisfies $\lvert a_i \rvert \leq \lvert a_{i + 1} \rvert$.  On the other hand, there is no obvious symmetry argument for why $W$ should vanish at the identity.  We establish the following (see Remark~\ref{rem:cnh3r9n3n2}):

\begin{theorem}\label{theorem:cnh3r840lw}
  Let notation be as above.
  \begin{enumerate}
  \item\label{enumerate:cnh38jx2q0} Suppose the residual characteristic $p$ of $F$ satisfies $p > 2 n$.  If $\pi^{K(\mathfrak{q})} \neq 0$, then there exists $W \in \pi^{K(\mathfrak{q})}$ with $W(1) \neq 0$.
  \item In general, there is an ideal $\mathfrak{d} \subseteq \mathfrak{o}$, independent of $\pi$, such that if $\mathfrak{q} \subseteq \mathfrak{d}$ and $\pi^{K(\mathfrak{d}^{-1} \mathfrak{q})} \neq 0$, then there exists $W \in \pi^{K(\mathfrak{q})}$ with $W(1) \neq 0$.
  \end{enumerate}
\end{theorem}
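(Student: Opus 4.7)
My plan is to recast the Whittaker-value statement as a representation-theoretic multiplicity problem and to resolve the latter via Kirillov's orbit method, following the strategy indicated in Section~\ref{sec:cnh2efy71t}. First I would note that $W(1) \neq 0$ for some $W \in \pi^{K(\mathfrak{q})}$ is equivalent to the $(N, \psi)$-equivariant Whittaker functional $\lambda : \pi \to \mathbb{C}$, $W \mapsto W(1)$, restricting nontrivially to the finite-dimensional subspace $\pi^{K(\mathfrak{q})}$. Since $\psi$ is trivial on $N \cap K(\mathfrak{q})$, this restriction descends to the finite quotient $K/K(\mathfrak{q}) = \GL_n(\mathfrak{o}/\mathfrak{q})$, and the goal becomes: exhibit an irreducible constituent of $\pi^{K(\mathfrak{q})}$ that is $\psi$-generic in the finite-group sense and on which the ambient $\lambda$ does not vanish.

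Next, via the forthcoming Lemma~\ref{lm:red_to_mult}, I would repackage this into a multiplicity count saying that $\pi$ must contain a certain ``regular type'' $(J, \chi)$, where $J$ is a compact open subgroup related to $K(\mathfrak{q})$ and $\chi$ is an irreducible representation (or character) of $J$ extending $\psi$ in a precise orbit-theoretic sense. The theorem would then follow once one shows that this regular type appears in $\pi|_J$ with positive multiplicity whenever $\pi^{K(\mathfrak{q})} \neq 0$, respectively $\pi^{K(\mathfrak{d}^{-1} \mathfrak{q})} \neq 0$.

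The construction of the regular type would proceed through the orbit method on the pro-$p$ group $K(\mathfrak{q})$. I would identify the successive quotients $K(\mathfrak{q}^i)/K(\mathfrak{q}^{i+1})$ with additive layers $\mathfrak{q}^i \mathfrak{gl}_n(\mathfrak{o}) / \mathfrak{q}^{i+1} \mathfrak{gl}_n(\mathfrak{o})$ via an exponential or Cayley map, parametrise characters of these layers by coadjoint orbits in the dual Lie algebra, and single out the orbit of the element dual under the trace pairing to $\sum u_{i, i+1}$, namely the lower-triangular cyclic nilpotent $\sum E_{i+1, i}$. Regularity of this orbit in the Lie-theoretic sense (its stabiliser is the centraliser of a regular nilpotent) is precisely what forces the corresponding induced representation to be irreducible and to appear in every generic $\pi$ with nonzero $K(\mathfrak{q})$-fixed vectors, by a Mackey/Frobenius analysis of the Gelfand--Graev representation of $\GL_n(\mathfrak{o}/\mathfrak{q})$. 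Under the hypothesis $p > 2 n$ of part~(1), all denominators occurring in this construction are units in $\mathfrak{o}$, so the argument goes through directly.

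The main obstacle, and the reason for part~(2), is that the orbit-method machinery (the exponential map, Campbell--Hausdorff linearisation, and the character-orbit dictionary) involves denominators up to $n!$ that can fail to be units when $p$ is small. My plan is to collect all such denominators into a single absolute ideal $\mathfrak{d}$ depending only on $n$, and to carry out the construction inside the deeper congruence subgroup $K(\mathfrak{d}^{-1} \mathfrak{q})$, where the identifications remain valid. The assumption $\pi^{K(\mathfrak{d}^{-1}\mathfrak{q})} \neq 0$ then supplies the fixed vectors needed to extract the regular type at this deeper depth, while the conclusion $W(1) \neq 0$ is still asserted at the original level $K(\mathfrak{q})$. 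The real difficulty is not setting up the orbit method but verifying that the regular type always appears; this multiplicity statement, depending on the ``deep results concerning regular representations over finite rings'' alluded to in Section~\ref{sec:cnh2efy71t}, is the representation-theoretic core of the proof.
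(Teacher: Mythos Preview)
Your overall architecture is right: reduce to the Bessel distribution, invoke Lemma~\ref{lm:red_to_mult} to convert $\mathcal{S}_{\pi,m}(\boldsymbol{\psi}_m)$ into a count of regular $K$-types, and then show this count is positive. The Gelfand--Graev step (that regular representations of $G_m$ occur in $\mathrm{Ind}_{U}^{G_m}\widetilde{\boldsymbol{\psi}}$ with multiplicity one) is indeed handled by the finite-ring results of Hill and Patel--Singla, as you anticipate.

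However, your mechanism for showing that a regular type actually occurs in $\pi|_K$ is where the proposal diverges from the paper and becomes a genuine gap. You propose to build the regular type attached to the regular nilpotent $\sum E_{i+1,i}$ via an exponential/Cayley identification and then argue by Mackey theory that it appears in any generic $\pi$ with $\pi^{K(\mathfrak{q})}\neq 0$. The paper does not do this, and it is unclear how a direct Mackey argument would force the type into an arbitrary (say supercuspidal) $\pi$. Instead, the paper first reduces to supercuspidal $\pi$ (via \cite[Lemma~6.10]{AB}), and then the existence of the regular type is obtained from Howe's local character expansion (Proposition~\ref{how_exp_quant}): one computes the multiplicity $m_{\pi,m}(Y_{(1,\dotsc,1)})$ by plugging the projector onto $\theta_{Y_{\reg}}$ into the expansion $\Theta_\pi = \sum_\alpha c_{\mathfrak{o}_\alpha}(\pi)\Theta_{\eta_\alpha}$ and using that only $\alpha=(1,\dotsc,1)$ contributes. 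The condition $p>2n$ enters not through exponential-map denominators but through DeBacker's theorem giving the sharp range of validity $\mathfrak{g}_{\rho(\pi)+}$ for this expansion; for small $p$ one only knows validity after shifting by an unspecified constant $C_0$, which is exactly the origin of the ideal $\mathfrak{d}$ in part~(2).

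There is a further subtlety your proposal misses entirely: the character-expansion argument only works when $m > \rho(\pi)+1$ (Lemma~\ref{depth_pos_case}), because one needs the test function supported inside the domain of validity. At the boundary $m = \rho(\pi)+1$, the relevant $\beta\in\mathcal{B}_m(\pi)$ are \emph{not} nilpotent but standard minimal, and the paper handles this case separately (Lemma~\ref{expl_depth}) via the Howe--Moy Hecke algebra isomorphism, reducing to a smaller $\GL_r(E)$ and then either inducting or twisting down to the $m>\rho(\pi)+1$ case. Your nilpotent-orbit picture does not cover this regime.
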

We expect that the conclusion of assertion \eqref{enumerate:cnh38jx2q0} holds without the assumption concerning residue characteristic.  The case $\mathfrak{q} = \mathfrak{o}$ amounts to the well-known fact that spherical Whittaker functions do not vanish at the identity.

We now give some heuristic justification of Theorem \ref{theorem:cnh3r840lw} via the metaphor of microlocal analysis and the orbit method (glossing over all details).  One associates to $\pi$ a coadjoint orbit $\mathcal{O}_\pi$, which may be understood as a conjugacy class of matrices describing the eigenvalues for $\pi$ under the action of small group elements.  The Whittaker functional $W \mapsto W(1)$ corresponds in this picture to elements of that coadjoint orbit of the form, e.g., for $n = 3$,
\begin{equation*}
  \begin{pmatrix}
    \ast    & \ast & \ast \\
    T & \ast & \ast \\
    0 & T & \ast \\
  \end{pmatrix},
\end{equation*}
with $T$ a generator of $\mathfrak{q}^{-1}$ corresponding to $\psi$.  Our result is related to the fact that if $\mathcal{O}_\pi$ contains some matrix with entries no larger than $T$, then it contains one of the above shape.  This fact follows readily from the theory of rational canonical form, using that $\mathcal{O}_\pi$ consists of regular matrices.

The actual proof of Theorem~\ref{theorem:cnh3r840lw} involves a systematic application of the theory of types developed by Howe, Moy and others.  One can see shadows in that argument of the above sketch, with a key role played by regular matrices.

\subsection{Applications}\label{sec:cnfm30ci4o} The two primary arithmetic applications of the density theorem are the following counting and lifting results for matrices, which for $n=2$ go back to Sarnak--Xue \cite{SX}.  We refer to \cite{GK} for a general discussion, additional context and some history.

\begin{cor}[Uniform Counting]\label{cor:count}
  For any $q\in \N$ and any $\epsilon>0$, $R \geq 1$ we have
  \begin{equation}
    \#\{ \gamma\in \Gamma(q)\colon \Vert \gamma\Vert\leq R  \} \ll_{\epsilon, n} (Rq)^{\epsilon}\Big(\frac{R^{n(n-1)}}{q^{n^2-1}}+R^{\frac{n(n-1)}{2}}\Big).\nonumber
  \end{equation}
\end{cor}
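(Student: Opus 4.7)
I would deduce Corollary~\ref{cor:count} from Corollary~\ref{cor_full_spec} by the classical pre-trace argument of Sarnak--Xue \cite{SX}, adapted to $\mathrm{SL}_n$. Set $G = \mathrm{SL}_n(\R)$ and $K_\infty = \mathrm{SO}(n)$. The plan is to fix a smooth, bi-$K_\infty$-invariant, nonnegative test function $k$ on $G$ that majorises $\mathbf{1}_{B_R}$ (where $B_R = \{g\in G\colon \|g\|\leq R\}$), is supported in a mild enlargement of $B_R$, and whose spherical transform $\widehat{k}(\mu)$ is essentially concentrated at archimedean parameters with $\|\mu\|\leq M := (Rq)^\epsilon$. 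By positivity and the pre-trace formula,
\begin{equation*}
  N(R,q) := \#\{\gamma\in\Gamma(q)\colon \|\gamma\|\leq R\} \leq \sum_{\gamma\in\Gamma(q)} k(\gamma) = \int_{\Gamma(q)} \widehat{k}(\mu_{\varpi,\infty})\,|\varpi(1)|^2\,d\varpi,
\end{equation*}
where the sum/integral runs over the full $L^2$-spectrum of $\Gamma(q)\bs G/K_\infty$.

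The constant function contributes exactly $\widehat{k}(\rho)/\mathrm{vol}(\Gamma(q)\bs G)\asymp \mathrm{vol}(B_R)/q^{n^2-1}\asymp R^{n(n-1)}/q^{n^2-1}$, yielding the first term of the claimed bound. For the remaining spectrum, the standard spherical-function estimate (via the Cartan integral formula and the bound $|\phi_\mu(g)|\ll e^{(\sigma-\rho)H(g)}$) gives
\begin{equation*}
  |\widehat{k}(\mu)|\ll R^{n(n-1)/2}\,R^{2\sigma}, \qquad \sigma := \max_i |\Re\mu_i|,
\end{equation*}
uniformly for $\|\mu\|\leq M$, with rapid decay beyond. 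Converting the pre-trace weight $|\varpi(1)|^2$ to the counting measure $d\varpi$ via the local Weyl law—which on average contributes a factor $[\mathrm{SL}_n(\Z):\Gamma(q)]^{-1}$, up to $M^{O(1)}$—one reduces the non-constant contribution to
\begin{equation*}
  \frac{R^{n(n-1)/2}\,M^{O(1)}}{[\mathrm{SL}_n(\Z):\Gamma(q)]}\int_{\|\mu_\varpi\|\leq M} R^{2\sigma_{\varpi,\infty}}\,d\varpi.
\end{equation*}
Applying Corollary~\ref{cor_full_spec} at $v = \infty$ with $T = R$, valid for $R \leq M^{-K}q^{n+1}$, bounds this by $(Rq)^\epsilon R^{n(n-1)/2}$, giving the second term.

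For the complementary range $R \gtrsim q^{n+1}$ the corollary is no longer directly applicable at the scale $T = R$; there I would use it at the largest admissible scale $T\asymp q^{n+1}$ and combine with the classical lattice-point asymptotic (e.g.\ Duke--Rudnick--Sarnak or Eskin--McMullen) pinning the count near the expected density $R^{n(n-1)}/q^{n^2-1}$, which is the dominant term in that regime. The principal technical obstacle is the careful normalisation bookkeeping between the pre-trace weight $|\varpi(1)|^2$ and the counting measure $d\varpi$ of Corollary~\ref{cor_full_spec}, so that the factor $[\mathrm{SL}_n(\Z):\Gamma(q)]$ cancels cleanly; once this local Weyl conversion is in place, every other ingredient—spherical transform bounds and the Cartan-coordinate volume computation $\mathrm{vol}(B_R)\asymp R^{n(n-1)}$—is classical, and the estimate drops out immediately from the density input.
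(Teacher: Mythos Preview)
Your proposal is correct and matches the paper's approach: the paper simply defers to \cite[Section~8]{AB} and \cite{JK}, which carry out exactly the Sarnak--Xue pre-trace argument you describe, feeding in the full-spectrum density bound (Corollary~\ref{cor_full_spec}) at $v=\infty$. One small simplification for the range $R\gtrsim q^{n+1}$: rather than invoking Duke--Rudnick--Sarnak, you can just apply Corollary~\ref{cor_full_spec} at $T\asymp q^{n+1}$ and use the trivial bound $\sigma_{\varpi,\infty}\leq (n-1)/2$ to absorb the excess factor $(R/q^{n+1})^{2\sigma}\leq (R/q^{n+1})^{n-1}$, which lands you directly in the first term $R^{n(n-1)}/q^{n^2-1}$.
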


The key point here is uniformity simultaneously in $q$ and $R$. 

\begin{cor}[Optimal Lifting]\label{cor:lift}
  For each $\epsilon>0$, there exists $\delta>0$ such that for every $q\in \N$, the number of matrices $\overline{\gamma}\in \mathrm{SL}_n(\Z/q\Z)$ that do not have a lift $\gamma\in \mathrm{SL}_n(\Z)$ with $\Vert \gamma\Vert \leq q^{1+\frac{1}{n}+\epsilon}$ is at most $\O_{\epsilon, n}(q^{n^2-1-\delta})$.
\end{cor}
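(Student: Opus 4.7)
The plan is to derive the lifting statement from Corollary~\ref{cor_full_spec} via the classical Sarnak--Xue variance method \cite{SX, GK}. Set $R := q^{1+1/n+\epsilon}$ and $\Lambda := [\mathrm{SL}_n(\Z):\Gamma(q)] \asymp q^{n^2-1}$, and call $\overline\gamma_0 \in \mathrm{SL}_n(\Z/q\Z)$ \emph{bad} if $N_q(\overline\gamma_0, R) := \#(\Gamma(q)\gamma_0 \cap B(R))$ is zero. Writing $B$ for the number of bad elements, we aim for $B \ll q^{n^2-1-\delta}$ with $\delta = \delta(\epsilon, n) > 0$. The numerical coincidence powering the whole proof is that the $1+\tfrac{1}{n+1}$ improvement of Theorem~\ref{th_mt} is precisely equivalent (via the admissibility range $T \leq q^{n+1}$ of Corollary~\ref{cor_full_spec}) to the optimal lifting exponent $1+\tfrac{1}{n}$.

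First I would pick a smooth bi-$K_\infty$-invariant approximation $\psi$ to $\mathbbm{1}_{B(\sqrt R)}$ on $\mathrm{SL}_n(\R)$ and set $\phi := \psi * \psi^*$. Then $\phi \geq 0$, $\hat\phi \geq 0$, $\phi$ is essentially supported in $B(R)$, $\|\phi\|_1 \asymp R^{n(n-1)}$, and the spherical transform obeys the bound $|\hat\phi(\mu_\varpi)|^2 \ll R^{n(n-1)} \cdot T^{2\sigma_{\varpi,\infty}}$ with $T := R^{n-1}$ (from the Plancherel-type estimate on $\hat\psi$ together with the bound $|\varphi_\mu(x)| \ll \|x\|^{(n-1)\sigma_{\varpi,\infty}}$ for spherical functions). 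Form $F(y) := \sum_{\gamma \in \Gamma(q)} \phi(\gamma y)$ on $\Gamma(q)\backslash G$, a function of mean $M \asymp q^{(n^2-n)\epsilon}$. For bad $\overline\gamma_0$, $F(\gamma_0) = 0$, and a standard discrete-to-continuous comparison (valid because $\phi$ varies on scales much larger than the injectivity radius of $\mathrm{SL}_n(\Z)$ in $\Gamma(q)\backslash G$) combined with Parseval gives the Sarnak--Xue inequality
\[
B\, M^2 \ll \sum_{\overline\gamma_0} (F(\gamma_0) - M)^2 \ll \|F - M\|_2^2 = \sum_{\varpi \neq \mathbbm{1}} |\hat\phi(\mu_{\varpi, \infty})|^2\, |\varpi(1)|^2.
\]

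Next, since $(n-1)(1+1/n+\epsilon) = n-1/n + O(\epsilon)$ is strictly below $n+1$ for small $\epsilon$, the choice $T = R^{n-1}$ lies within the admissible range of Corollary~\ref{cor_full_spec}. That corollary, combined with the Plancherel-type normalization $|\varpi(1)|^2 \asymp 1/\Lambda$ for spherical components of bounded archimedean parameter, yields $\sum_{\varpi\neq\mathbbm{1}} T^{2\sigma_{\varpi,\infty}}|\varpi(1)|^2 \ll q^\epsilon$. Inserting this with $R^{n(n-1)} = q^{n^2-1+n(n-1)\epsilon}$, $M^2 = q^{2(n^2-n)\epsilon}$, and $\Lambda = q^{n^2-1}$,
\[
B \ll \frac{q^\epsilon R^{n(n-1)}}{M^2} = q^\epsilon \cdot \frac{\Lambda^2}{R^{n(n-1)}} = q^{n^2-1 - (n(n-1)-1)\epsilon},
\]
so $\delta := (n(n-1)-1)\epsilon > 0$ (valid for $n \geq 2$) completes the argument.

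The main technical point is the spherical transform bound of the first step, which must synthesize the tempered oscillation savings on $\hat\psi$ with the complementary-series growth for exceptional parameters so that $T = R^{n-1}$ fits into the $T \leq q^{n+1}$ window of Corollary~\ref{cor_full_spec}; the arithmetic $R^{n-1} = q^{n-1/n+O(\epsilon)}$ is exactly the coincidence that makes the optimal exponent $1+1/n$ work. Everything else -- the formation of $F$, the variance identity, the discrete-to-continuous passage, and the Plancherel normalization of $|\varpi(1)|^2$ -- is standard within the Sarnak--Xue framework, so the entire novelty resides in the density theorem that has already been established in this paper.
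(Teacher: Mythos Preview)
Your overall strategy is exactly that of the paper: the proof of Corollary~\ref{cor:lift} is not given in detail but deferred to \cite[Section~8]{AB} and \cite{JK}, which carry out precisely the Sarnak--Xue variance argument you describe, with Corollary~\ref{cor_full_spec} as the spectral input. So at the level of method there is nothing to compare.

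There is, however, a genuine error in your spherical transform estimate. You claim $|\hat\phi(\mu_\varpi)|^2 \ll R^{n(n-1)} T^{2\sigma}$ with $T=R^{n-1}$, but the correct exponent is $T=R^{n}$. Indeed, for $\phi$ essentially $\mathbbm{1}_{B(R)}$ (or $\psi*\psi^*$ with $\psi\approx\mathbbm{1}_{B(\sqrt R)}$, which gives the same transform), one has $\hat\phi(\text{trivial})=\|\phi\|_1\asymp R^{n(n-1)}$ and $|\hat\phi(\text{tempered})|\ll \|\phi\|_{2+\varepsilon}\asymp R^{n(n-1)/2}$; interpolating between $\sigma=0$ and $\sigma=(n-1)/2$ forces $|\hat\phi(\mu)|\ll R^{n(n-1)/2}(R^n)^{\sigma}$, hence $|\hat\phi|^2\ll R^{n(n-1)}(R^n)^{2\sigma}$. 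Your justification combines a Plancherel $L^2$-baseline with the pointwise bound $|\varphi_\mu(x)|\ll\|x\|^{(n-1)\sigma}$, but these do not compose to give $T=R^{n-1}$; the second bound is relative to the $L^1$-baseline, not the $L^2$-one. (A sanity check: for $\SL_2$ your $T=R$ gives $|\hat\phi(\text{trivial})|^2\ll R^3$, whereas the actual value is $R^4$.)

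This matters for your ``numerical coincidence.'' With $R=q^{1+1/n+\varepsilon}$ the correct $T=R^n=q^{n+1+n\varepsilon}$ sits \emph{exactly at} the threshold $T\leq q^{n+1}$ of Corollary~\ref{cor_full_spec}, not comfortably below it as your $T=R^{n-1}=q^{n-1/n+O(\varepsilon)}$ would suggest. That saturation is the true coincidence linking the density exponent to the lifting exponent $1+1/n$. The argument still goes through---one exploits the $q^\varepsilon$-slack in Corollary~\ref{cor_full_spec}, the stronger cuspidal exponent from Theorem~\ref{th_mt} (allowing $T$ up to $q^{n+2}$), and the spectral gap separating the constant function from the rest of the residual spectrum---but this boundary case requires genuine care, which is precisely what \cite{JK} supplies. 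Your sketch, as written, claims a margin that does not exist.
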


Corollaries~\ref{cor:count} and ~\ref{cor:lift} generalize \cite[Theorem~5 and 6]{JK} to arbitrary $q$. See also \cite[Theorem~1.4 and 1.5]{AB} for conditional versions.


A density theorem can often be turned into a spectral gap. For instance, in the case of the Hecke congruence subgroup $\Gamma_0(q)$ with $q$ prime, the argument  goes as follows.  Suppose $\sigma$ is the real part of a Langlands parameter at $v$ of a cusp form contributing to $L^2_{\mathrm{cusp}}(\Gamma_0(q)\backslash \mathrm{SL}_n(\mathbb{R})/\mathrm{SO}_n(\mathbb{R}))$.  Then, by the density theorem from \cite{Bl}, we have
\begin{equation}
  1 \ll_{\epsilon, n, v} q^{1-\frac{4\sigma}{n-1}+\epsilon}\text{ for $q$ prime.}\nonumber
\end{equation}
This implies $\sigma\leq \frac{n-1}{4}$.  For $n=2$, this translates to Selberg's famous $\frac{3}{16}$-Theorem.  For $n=3$, this agrees with the Jacquet--Shalika bound, which says $\sigma\leq\frac{1}{2}$.  For all other $n$, the statement is not very interesting.

If we run the same argument for the principal congruence subgroup, then we can exploit that non-trivial eigenspaces have large multiplicity.  Indeed, the strong density hypothesis of Theorem~\ref{th_mt} gives
\begin{equation}
  q^{\frac{n(n-1)}{2}} \ll_{\epsilon, n, v} q^{(n^2-1)(1-\frac{2\sigma}{n-1}(1+\frac{1}{n+1}))+\epsilon}. \nonumber
\end{equation}
This translates again into $\sigma\leq \frac{n-1}{4}$.  In this sense, our saving is similar in strength to that in \cite{Bl}.
For $n=3$, we tighten the screws and obtain the following non-trivial improvement.

\begin{theorem}\label{th_gl3}
  In the setting of Theorem \ref{th_mt} with $n = 3$,
  there exists a constant $K$ such that
  \begin{equation*}
    N_v(\sigma,\mathcal{F}_{\mathrm{cusp}}) \ll_{v,\epsilon} M^K \cdot q^{8-12\sigma+\epsilon}
  \end{equation*}
  for all $\sigma\geq	 0$.
\end{theorem}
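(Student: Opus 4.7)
The plan is to reprise the framework of the proof of Theorem~\ref{th_mt} applied to the rank two case, namely the $\mathrm{SL}_3$ Kuznetsov formula on $\Gamma(q)$ combined with an amplifier at $v$ and an archimedean test function localized on spectral parameters of norm $\leq M$, and to extract the additional savings that the small rank allows. Concretely, I would choose an amplifier $f_v$ at $v$ of scale $T$ (a power of $q$ to be optimized), whose transform satisfies $\tilde f_v(\mu)\gg T^{2\sigma}$ on the non-tempered locus $\sigma_{\varpi,v}\geq \sigma$, together with a nonnegative archimedean test function $h_\infty$ bounded below by $1$ on $\|\mu\|\leq M$. Applying the Kuznetsov formula to the pair $(f_v,h_\infty)$ and invoking the averaged lower bound for the first Fourier coefficients provided by Theorem~\ref{theorem:cnh3r840lw}, the spectral side dominates
\[
  [\mathrm{SL}_3(\mathbb{Z})\colon \Gamma(q)]^{-1}\, T^{2\sigma}\, N_v(\sigma,\mathcal{F}_{\mathrm{cusp}}).
\]

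The geometric side decomposes, via the Bruhat cells of $\mathrm{SL}_3$, into a diagonal main term together with contributions from the long Weyl element $w_{\ast}$ and two intermediate Weyl elements $w_1,w_2$ corresponding to the simple reflections in $S_3$. For $w_{\ast}$, I would apply the sharp bound on the number of double cosets from Lemma~\ref{lem:kszeta}, which is tight uniformly in $q$ and in the moduli. The new ingredient, and the reason the exponent improves beyond Theorem~\ref{th_mt} in this case, is the estimate for $w_1$ and $w_2$: their Kloosterman sums factor through $\mathrm{GL}_2$ Kloosterman sums, to which the Weil bound delivers genuine square-root savings after averaging over the modulus. This cancellation is unavailable for general $n$, where the analogous intermediate Weyl elements still involve $\mathrm{GL}_k$ Kloosterman sums with $k\geq 3$ for which only counting bounds are available.

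Balancing $T$ against the geometric bounds in the standard way (with the optimal $T$ now taken to be a larger power of $q$ than in the proof of Theorem~\ref{th_mt}) yields the claimed exponent $q^{8-12\sigma+\epsilon}$; the polynomial factor $M^K$ absorbs archimedean losses from the Bessel transforms at spectral parameters of norm $\leq M$. The principal obstacle is securing the Weil-type bound for $w_1,w_2$ uniformly in the arithmetic structure of $q$, in particular for high prime-power components of the level. While the factorization of the intermediate Kloosterman sums through $\mathrm{GL}_2$ sums is transparent at squarefree levels, in the prime-power case it requires the same flavor of combinatorial analysis of Kloosterman sets developed in Section~\ref{sec_g} for the long Weyl element, in order to rule out $q$-dependent losses arising from the large index of the diagonal part of $\Gamma(q)$ in the torus. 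Once this uniformity is in place, the optimization of $T$ is routine and the conclusion follows.
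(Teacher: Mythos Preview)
Your overall framework is right (Kuznetsov formula, Fourier-coefficient lower bound, decomposition over Weyl elements, optimization of the amplifier length), but the attribution of where the extra saving for $n=3$ comes from is essentially reversed, and several references are off.

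First, for $n=3$ the admissible Weyl elements besides the identity are $w_1=w_{1,2}$, $w_1'=w_{2,1}$ and $w_l$; the first two are the $3$-cycles, not the simple reflections. Second, Lemma~\ref{lem:kszeta} is stated and proved only for $n\geq 4$ and cannot be invoked here; the relevant bound for the ramified Kloosterman sets of $w_\ast=w_l$ in rank two is Lemma~\ref{deg3_prime_long}. Third, and most importantly, the non-trivial Weil-type input is applied to the \emph{long} Weyl element, not to $w_1,w_1'$. In the proof of Lemma~\ref{gl_3_geo} the $w_l$-sum is factored into a ramified part (controlled by Lemma~\ref{deg3_prime_long}) and an unramified part, and it is to the latter that one applies the $\GL(3)$ Kloosterman bound \eqref{Weil_on_av}; this produces $\mathcal{S}_{w_l,N}^{(3)}(Z)\ll \mathcal{N}_q(1+Z/q^6)$. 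The contributions of $w_1,w_1'$ are handled by pure counting (Lemma~\ref{lm:gl3_set_est_vor}) together with the relevance constraint, which forces the moduli to lie on a one-parameter family $c=(m\gamma^2,\gamma)$ and already yields $\mathcal{S}_{w_1,N}^{(3)}(Z)\ll (\mathcal{N}_q/q^2)\,Z^{1/2}/q^2$; no Weil bound is used or needed there. The binding constraint when optimizing is $Z/q^6\ll 1$ from $w_l$, giving $Z\asymp q^6$ and hence $N_v(\sigma,\mathcal{F}_{\mathrm{cusp}})\ll \mathcal{V}_q\,Z^{-2\sigma}\asymp q^{8-12\sigma+\epsilon}$.

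So the mechanism is the opposite of what you describe: the square-root cancellation enters through the long-element $\GL(3)$ Kloosterman sum, while the intermediate elements are dealt with by divisibility and counting. If you only used the counting bound for $w_l$ (as you propose via Lemma~\ref{lem:kszeta}), you would not reach $Z\asymp q^6$, and the stated exponent would not follow.
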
 

An amusing consequence of the argument indicated above is the bound
\begin{equation*}
  \sigma_{\varpi,v}\leq \frac{5}{12}
\end{equation*}
towards the (generalized) Ramanujan conjecture on ${\rm GL}(3)$ for every cusp form $\varpi$ which is spherical at $v$.  This is worse than the best known bound (i.e.\ $\sigma_{\varpi,\infty}\leq \frac{5}{14}$) obtained from progress towards functoriality, but improves on the Jacquet--Shalika barrier (i.e.\ $\sigma_{\varpi,v}\leq \frac{1}{2}$) and makes no recourse to functorial lifts. \medskip

The proof of Theorem \ref{th_mt} uses, among other things, the Kuznetsov formula, and is thus restricted to subgroups with cusps.  Nevertheless, it is important to note that it can be transferred to inner forms of $\mathrm{SL}_n$ using the Jacquet--Langlands correspondence.  We take the opportunity to do so in the most basic case, establishing the (spherical) density hypothesis for certain co-compact quotients of $\mathrm{SL}_n(\R)$. The proof is particularly streamlined by the fact that we have density result for $\Gamma(q)$ wih not necesarrily squarefree $q$ available.

We require a bit more notation.  Let $B$ be a degree $n\geq 3$ division algebra over $\Q$ and let $\mathcal{O}\subseteq B$ be an order. Write $\mathcal{O}^1\subseteq \mathcal{O}^{\times}$ for the subgroup of norm one units in the order $\mathcal{O}$.  Let $\mathrm{discr}(\mathcal{O})$ denote the discriminant of $\mathcal{O}$. We assume that $B$ is split over $\R$ and fix an embedding $\phi\colon D\to \mathrm{Mat}_{n}(\R)$ such that $\phi(D)$ is a $\Q$-form of $\mathrm{Mat}_{n}(\R)$. Then $\Gamma_{\mathcal{O}}=\phi(\mathcal{O}^1)\subseteq \mathrm{SL}_n(\R)$ is a co-compact arithmetic subgroup of $\mathrm{SL}_n(\R)$ (see, for example, \cite[Proposition~6.8.9]{Mo}).  For $q\in \mathbb{N}$, we define the principal congruence subgroups by $\Gamma_{\mathcal{O}}(q) := \Gamma_{\mathcal{O}}\cap \Gamma(q).\nonumber$ The space $L^2(\Gamma_{\mathcal{O}}(q)\backslash \mathrm{SL}_n(\R)/\mathrm{SO}_n)$ features a discrete decomposition involving Hecke--Maa\ss\ forms $\varpi$.  As in the non-compact case, we can attach a set of Langlands parameters $\{\mu_{\varpi,v}(j)\colon 1\leq j\leq n \}$ to each unramified place $v$.  Let $\mathcal{F}_{\Gamma_{\mathcal{O}}(q)}(M)$ denote the family of Hecke--Maa\ss\ forms $\varpi$ for $\Gamma_{\mathcal{O}}(q)$ with archimedean spectral parameter $\Vert\mu_{\varpi,\infty}\Vert\leq M$.  We can now state the following theorem:

\begin{theorem}\label{th_cocomp}
  Let $\mathcal{O}$ be as above and let $M,\epsilon>0$.  Further, let $q\in \mathbb{N}$ be such that $(q,\mathrm{discr}(\mathcal{O}))=1$.  Fix a place $v$ of $\mathbb{Q}$, and if $v = p$ is finite, assume that $p \nmid  q\, \mathrm{discr}(\mathcal{O})$. There exists a constant $K$, depending only on $n$, such that
  \begin{equation}
    N_v(\sigma,\mathcal{F}_{\Gamma_{\mathcal{O}}(q)}(M)) \ll _{\mathcal{O},v,\epsilon,n} M^K \cdot [\Gamma_{\mathcal{O}}\colon \Gamma_{\mathcal{O}}(q)]^{1-\frac{2\sigma}{n-1}+\epsilon}, \nonumber
  \end{equation}
  for $\sigma\geq	 0$.
\end{theorem}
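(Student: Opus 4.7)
The plan is to transfer the problem from the inner form $B^\times$ to $\GL_n$ via the global Jacquet--Langlands correspondence, and then invoke Corollary~\ref{cor_disc}; the availability of the latter for arbitrary (not necessarily squarefree) level is exactly what makes this route clean. View $G := B^\times$ as an algebraic group over $\Q$, so that $G(\R) \cong \GL_n(\R)$ via $\phi$. Strong approximation for the derived subgroup of $G$ identifies $\mathcal{F}_{\Gamma_{\mathcal{O}}(q)}(M)$ with the set of Hecke--Maass forms underlying irreducible discrete automorphic representations $\pi$ of $G(\A)$ which are $K$-fixed, where $K = \prod_p K_p \subset G(\A_{\fin})$ satisfies $K_p = \mathcal{O}_p^\times$ at $p \nmid q$ and $K_p = \mathcal{O}_p^\times \cap \Gamma(q)_p$ at $p \mid q$; the latter makes sense since $(q, \mathrm{discr}(\mathcal{O})) = 1$ forces $\mathcal{O}_p \cong M_n(\Z_p)$ at such primes. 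Apply the global $\jl$ correspondence of Deligne--Kazhdan--Vigneras, completed in general degree by Badulescu, to send $\pi$ to a discrete automorphic representation $\pi' := \jl(\pi)$ of $\GL_n(\A)$ whose local components at the ramified places of $B$ are essentially square-integrable. At every place where $B$ splits---in particular at $\infty$ and at our chosen $v$---the local $\jl$ is the identity, so $\mu_{\pi',v}(j) = \mu_{\pi,v}(j)$ and $\sigma_{\pi',v} = \sigma_{\pi,v}$.

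Next we control the conductor of $\pi'$. At $p \mid q$, triviality of local $\jl$ provides a $\Gamma(q)_p$-fixed vector in $\pi'_p$. At each of the finitely many $p \mid \mathrm{discr}(\mathcal{O})$, the local component $\pi_p$ admits an $\mathcal{O}_p^\times$-fixed vector; since $\mathcal{O}_p^\times$ is the unique maximal compact subgroup of $B_p^\times$ modulo center, $\pi_p$ must be one-dimensional, and its $\jl$-image is a twist of the Steinberg whose conductor is bounded purely in terms of $\mathcal{O}$. Thus $\pi'$ has $\Gamma(qD)_p$-fixed vectors everywhere with $D = D_{\mathcal{O}}$ a positive integer independent of $q$ and $\pi$, and therefore contributes to $\mathcal{F}_{\Gamma(qD), \mathrm{disc}}(M)$. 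The multiplicity at the ramified primes is uniformly bounded, since the relevant $K$-fixed dimensions vary over a finite set depending only on $\mathcal{O}$. This yields
\begin{equation*}
  N_v(\sigma, \mathcal{F}_{\Gamma_{\mathcal{O}}(q)}(M)) \ll_{\mathcal{O}} N_v(\sigma, \mathcal{F}_{\Gamma(qD), \mathrm{disc}}(M)) \ll_{v,\epsilon,n} M^K [\SL_n(\Z) : \Gamma(qD)]^{1 - \frac{2\sigma}{n-1} + \epsilon},
\end{equation*}
where the second inequality is Corollary~\ref{cor_disc}. A factor-by-factor index comparison at the primes $p \nmid \mathrm{discr}(\mathcal{O})$, where $\mathcal{O}_p^\times \cong \GL_n(\Z_p)$, gives $[\SL_n(\Z) : \Gamma(qD)] \asymp_{\mathcal{O}} [\Gamma_{\mathcal{O}} : \Gamma_{\mathcal{O}}(q)]$, delivering the claimed bound.

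The main obstacle is the careful bookkeeping at the ramified primes of $B$: one must track the local conductor and multiplicity of $\pi'_p$ and verify that all implied constants depend only on $\mathcal{O}$, and not on $q$ or the particular $\pi$. This requires either invoking the explicit local $\jl$ for division algebras---under which a character is sent to a known twist of the Steinberg---or quoting conductor and depth preservation results in the style of Bushnell--Henniart. A clean dictionary relating classical Hecke--Maass eigenforms on the cocompact double-coset space to adelic automorphic representations, correctly handling the central character and the embedding $\phi$, is also needed. Once this setup is in place, Theorem~\ref{th_cocomp} follows from Corollary~\ref{cor_disc} together with the elementary index comparison sketched above.
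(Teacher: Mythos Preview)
Your overall strategy---transfer via global Jacquet--Langlands, invoke Corollary~\ref{cor_disc} at level $qD_{\mathcal{O}}$, then compare indices---is exactly the paper's route (Proposition~\ref{main_ing_coc} followed by the two-line argument in Section~\ref{sec:cnfm30d8bc}).

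There is, however, a genuine gap in your handling of the primes $p \mid \mathrm{discr}(\mathcal{O})$. You assert that $\mathcal{O}_p^\times$ is the unique maximal compact subgroup of $B_p^\times$ modulo center, and conclude that $\pi_p$ is one-dimensional. This requires two things that are not assumed: that $B_p$ is itself a division algebra (for a degree-$n$ division algebra over $\Q$ the local completions can be matrix algebras over smaller division algebras), and that $\mathcal{O}_p$ is the maximal order in $B_p$ (the theorem allows an arbitrary order, whose discriminant may be supported on primes where $B$ splits but $\mathcal{O}$ is not locally maximal). In either failure mode $\mathcal{O}_p^\times$ need not be normal, and there is no reason for $\pi_p$ to be a character.

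The paper avoids this by arguing more softly at the bad primes. On the $B^\times$ side, Bernstein's uniform admissibility bounds $\dim \sigma_p^{K_{\mathcal{O},p}}$ by a constant depending only on $\mathcal{O}$. On the $\GL_n$ side, one observes that up to unramified twist there are only finitely many irreducible $\sigma_p$ with a nonzero $K_{\mathcal{O},p}$-fixed vector (this is \cite[Lemma~1]{BH}); one then uses depth preservation under the local Jacquet--Langlands correspondence \cite{ABPS} together with \cite{MY} to produce a single exponent $d_{\mathcal{O},p}$ such that every $\mathrm{JL}_p(\sigma_p)$ has a $K_p(p^{d_{\mathcal{O},p}})$-fixed vector. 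This yields the required $D_{\mathcal{O}}$ without ever needing $\pi_p$ to be one-dimensional. Your final paragraph gestures at exactly this alternative (``depth preservation results in the style of Bushnell--Henniart''); that is the argument you actually need, and the explicit Steinberg description should be dropped.
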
 

\begin{rem}
  It would be interesting to estimate $N_v(\sigma,\mathcal{F}_{\Gamma_{\mathcal{O}}(q)}(M))$ as the order itself varies (see \cite{FHMM} for when $n = 2$).  This requires a refined understanding of the local Jacquet--Langlands transfer at places where $\mathcal{O}$ ramifies. 
\end{rem}


\subsection{Odds and Ends}\label{sec:cnfm30ckjc}

We record some further questions and problems:
\begin{enumerate}
\item Our density theorem uses the quantity $\sigma_{\varpi,v}=\max_j\vert \Re(\mu_{\varpi,v}(j))\vert$ as a measure for non-temperedness.  This is not the only possibility.  Another natural option is to use $p_{\varpi,v}$, defined to be the infimum of all $q$ such that the $p$-adic matrix coefficient of $\varpi$ is in $L^q(\mathrm{GL}_n(\Q_p)/Z(\Q_p))$.  This notion was employed in Sarnak's original formulation of the density hypothesis, see \cite{Sa, GK, SX} for example.  For $n\geq 4$, the two measures of non-temperedness $p_{\varpi,v}$ and $\sigma_{\varpi,v}$ do not carry the same information.  We emphasize, however, that our approach via the Kuznetsov formula can in principle access more information than just $\sigma_{\varpi,v}$ by modifying the set-up in the spectral side (e.g.\ using different Hecke operators or archimedean test functions).  These modifications of course also change the analysis on the geometric side.  Taking a closer look at this might be interesting in view of applications of the Kuznetsov formula in higher rank beyond the density hypothesis.
\item Even in the realm of $\mathrm{GL}_n$, there are many natural generalizations of Theorem~\ref{th_mt}.  For instance, one could work over general number fields (the case $n=2$ is included in \cite{FHMM}).  Since the most difficult steps in the proof are local in nature, this should impose no major difficulties.  Indeed, the case of totally fields should be rather obvious, while the general case would need a little bit of work over the complex places. Another extension would be to include non-spherical representations at the archimedean places. This seems to be a much harder problem.
\item The asymptotic parameter in our density theorem is the principal congruence subgroup's level, or equivalently, its covolume while the spectral radius $M$ in Theorem \ref{th_mt} is essentially fixed. It is equally natural and interesting to view $M$ as the growing parameter and keep $q$ fixed, or to aim for a hybrid bound. 
  An archimedean density hypothesis (with applications explained in \cite{JK2}) is known for $\mathrm{SL}_3$ by \cite{BBR}. See also \cite{Ja} for a different archimedean family. 
\item It is a key feature of \cite{AB, Bl} and the paper at hand that on the geometric side, the Weyl element $w_{\ast}$, defined in \eqref{speical_Weyl} below, plays a special role. 
  Our methods to treat of $w_{\ast}$ are powerful enough that in the present paper, the contribution of $w_{\ast}$ is not the bottleneck in the treatment of the geometric side, but rather our insufficient understanding of Kloosterman sums (and sets) of other Weyl elements, which would show up if one wants to further improve the estimate from Theorem~\ref{th_mt}.  This, among other things, should serve as a good motivation to study Kloosterman sums and (ramified) Kloosterman sets associated to arbitrary Weyl elements. In fact, other Weyl elements become relevant straight away when trying to establish the density hypothesis for lattices $\Gamma$ different from $\Gamma_0(q)$ and $\Gamma(q)$; cf.\  \cite{A1}.
\item Last but not least, it is of course interesting to consider the density hypothesis beyond $\mathrm{SL}_n$.  In principle, there is clear recipe how the Kuznetsov formula should be applied to the problem of counting exceptional generic cusp forms for quasi-split (classical) groups. However, in order to implement this strategy, one is faced with interesting problems in local harmonic analysis: one needs some quantitative estimates for Bessel distributions as well as some understanding of ramified Kloosterman sets. The latter can be translated into an estimate for local intertwining operators. To us, it seems that solving these problems in general is very hard.  In addition, as the Kuznetsov formula can never say anything about non-generic forms, one needs to add these artificially. This happens already for $\mathrm{SL}_n$, where the discrete, non-cuspidal spectrum needs to be treated differently. In the case of classical groups, one can hope to use the endoscopic classification to remedy the situation.  Prototypical results in this direction are given in \cite{A2} and \cite{EGG}, for example.
\end{enumerate}

\subsection{Notation} \label{sec:nor}
We conclude this introduction by recording some notation used throughout the paper.

For the rest of paper, $n$ and the place $v$ will be kept fixed and we will from now on suppress them from  the asymptotic notation. In addition, all implied constants may depend on $\epsilon$ where applicable, and following usual practice, the value of $\epsilon$ may change from line to line. 


For a finite place $v=p$ we define the $p$-adic valuation $v_p$ (normalised by $v_p(p)=1$) on $\Q_p$ and the corresponding absolute value $\vert x\vert_p = p^{-v_p(x)}$. We write $p^m \parallel q$ if $v_p(q) = m$. As in \cite{AB} and many other sources we use the self-explanatory notation $(a, b^{\infty})$ and $a \mid b^{\infty}$ for positive integers $a, b$. The local zeta factor is given by $\zeta_p(s)=(1-p^{-s})$. Note that we can express the Euler totient function $\varphi$ as
$\varphi(q) = \# (\mathbb{Z}/q\mathbb{Z})^{\times} = q\cdot\prod_{p\mid q} \zeta_p(1).$ 

The Weyl group $W$ of $\mathrm{SL}_n(\R)$ is isomorphic to $S_n$. For each $w\in W$ we will once and for all fix a representative in $\mathrm{SL}_n(\Z)$, which will also be denoted by $w$. Note that in order to force the determinant to be $1$ we will chose a sign in the first row of our representative. Of special interest will be Weyl elements of the form
\begin{equation}\label{admis}
  w_{d_1,\ldots,d_r} = \left(
    \begin{matrix}
      & & & I_{d_1}^{\pm} \\ &  & I_{d_2} & \\ & \iddots & & \\ I_{d_r} & & &   
    \end{matrix}
  \right),  
\end{equation}
where $I_d$ denotes the $d\times d$ identity matrix and $I_d^{\pm}=\diag(\pm 1,1,\ldots,1)$. Specifically the long Weyl element is given by $w_l=w_{1,\ldots,1}$. A special role is played by the element
\begin{equation}
  w_{\ast} = w_{1,n-2,1} = \left(
    \begin{matrix}
      0&0&-1\\0&I_{n-2}&0\\1&0&0
    \end{matrix}\right) \label{speical_Weyl}
\end{equation}
and also
\begin{equation*}
  w_1 := w_{1, n - 1},
  \qquad
  w_1 ' := w_{n - 1, 1}.
\end{equation*}
For $n=3$, we encounter the Weyl elements
\begin{equation}
  w_1=w_{1,2}=\left(
    \begin{matrix}
      0& 1 & 0\\0&0&1 \\ 1 & 0 & 0 
    \end{matrix}
  \right),\, w_1'=w_{2,1}=\left(
    \begin{matrix}
      0& 0 & 1\\1&0&0 \\ 0 & 1 & 0 
    \end{matrix}
  \right) \text{ and }w_l=w_{\ast} = \left(
    \begin{matrix}
      0&0&-1\\ 0&1&0 \\ 1&0&0
    \end{matrix}
  \right)\nonumber
\end{equation}
Given $w\in W$ the corresponding permutation is given by $i\mapsto j$ if $w_{ij}\neq 1$.

For a (commutative) ring $R$ (with unit) we define the following important subgroups of $\mathrm{GL}_n(R)$. The diagonal torus is denotes by $T(R)\cong (R^{\times})^n$ and it contains the centre $Z(R)\cong R^{\times}$. Furthermore we write $T_0(R) = T(R)\cap \mathrm{SL}_n(R)$. The group of unipotent upper triangular matrices is called $U(R)$. Given $w\in W$ we write
\begin{equation}
  U_w(R) =w^{-1}U(R)^{\top}w\cap U(R).\nonumber
\end{equation}
The Bruhat decomposition over $\Q$ is given by
\begin{equation}
  \mathrm{SL}_n(\Q) = \bigsqcup_{w\in W}G_w(\Q) \text{ for }G_w(\Q) = U(\Q)T_0(\Q)wU_w(\Q). \nonumber
\end{equation} 
Note that $B = TU$ (resp. $B_0=T_0U$) is the standard minimal parabolic subgroup of $\mathrm{GL}_n$ (resp. $\mathrm{SL}_n$). Other standard parabolic subgroups will usually be denoted by $P$.

The following matrices in $T$ (resp.\ $T_0$) will be of special interest. First, given $q\in \N$ we define
\begin{equation*}
  D_q=\diag(q^{n-1},q^{n-2},\ldots, q,1)\in T(\Q). 
\end{equation*}
Furthermore, given $c=(c_1,\ldots,c_{n-1})\in \mathbb{N}^{n-1}$ we write
\begin{equation}
  c^{\ast} = \diag\Big(\frac{1}{c_{n-1}},\frac{c_{n-1}}{c_{n-2}},\ldots,\frac{c_2}{c_1},c_1\Big)\in T_0(\Q).\nonumber
\end{equation}
Given $c$ as above it will be convenient to set $c_0=1=c_n$. We define $c_{\mathrm{op}} = (c_{n-1},\ldots,c_1)$ by reversing the order of the entries.

Over $\R$ the all important compact subgroup is $K_{\infty} = \mathrm{SO}_n(\R)$. For a finite place $v=p$ we set $K_p=\mathrm{GL}_n(\Z_p)$. The local principal congruence subgroup is given by $K_p(q) = (1+q\mathrm{Mat}_{n}(\Z_p))\cap K_p$. (Note that we take the intersection with $K_p$ only to ensure that $K_p(q)=K_p$ whenever $p\nmid q$.) We also fix the (standard) Iwahori subgroup $B_{0,p}\subseteq K_p$ given by
\begin{equation}
  B_{0,p} = \{ k\in K_p\colon [k\text{ mod }p]\in TU(\mathbb{Z}/p\Z)\}.\nonumber
\end{equation}
Finally set $K_p(q)^{\natural} = D_q^{-1}K_p(q)D_q\subseteq \mathrm{GL}_n(\Q_p)$.

Globally we will encounter the subgroups $K=\prod_v K_v$, $K(q) = K_{\infty}\cdot \prod_{p}K_p(q)$ and $K(q)^{\natural} = K_{\infty}\cdot \prod_{p}K_p(q)^{\natural}$. These are the adelic counterparts to the classical lattices $\mathrm{SL}_n(\Z)$, $\Gamma(q)$ amd $\Gamma(q)^{\natural} = D_q^{-1}\Gamma(q)D_q$. The corresponding (classical) quotients are
\begin{equation}
  X_q = \Gamma(q)^{\natural}\backslash \mathrm{SL}_n(\R)/\mathrm{SO}_n(\R) \text{ and }X(q) = \Gamma(q)\backslash \mathrm{SL}_n(\R)/\mathrm{SO}_n(\R).\nonumber
\end{equation}

At the finite places we fix the standard additive characters $\psi_p$ of $\Q_p$ which are trivial on $\Z_p$ but non-trivial on $p^{-1}\Z_p$. We use this character to define the Fourier transform\footnote{This is the inverse of the usual Fourier transform, which we choose for notational convenience.}
\begin{equation*}
  \widehat{f}(Y)=\int_{\mathrm{Mat}_n(\mathbb{Q}_p)}f(-X)\overline{\psi_p(\mathrm{Tr}(XY))}dX.
\end{equation*}
Here $dX$ is normalized to be self-dual. In the context of this Fourier transform it will also be useful to introduce the notation $f_t(X) = f(t^{-1}X)$ for a function $f\colon \mathrm{Mat}_n(\mathbb{Q}_p) \to \mathbb{C}$ and $t\in \Q_p$.

We lift the characters $\psi_p$ to the non-degenerate character $\boldsymbol{\psi}_{\Q_p}$ of $U(\Q_p)$ given by
\begin{equation}
  \boldsymbol{\psi}_{\Q_p}(u) = \psi_p(u_{1,2}+\ldots+u_{n-1,n}).\nonumber
\end{equation}
These are in particular trivial on $U(\Z_p)$.

At the archimedean place we define $\psi_{\infty}(x) =e(x)=e^{2\pi ix}$. For $N=(N_1,\ldots,N_{n-1})\in \Z^{n-1}$ we lift $e(\cdot)$ (or $\psi_{\infty}$) to $U(\R)$ by setting
\begin{equation}\label{char}
  \theta_{N}(u) = e(N_{n-1}u_{1,2}+\ldots+N_1u_{n-1,n}). 
\end{equation}
We will also write $\theta_N^v(u) = \theta_{N}(v^{-1}uv)$ for $v \in V$, the set of diagonal matrices with entries $\pm 1$. 

Let us briefly talk about measures on $\mathrm{GL}_n(\Q_p)$. First of all we normalize the Haar measure on $K_p$ to be a probability measure.  We equip $P(\Q_p)$ with a left invariant measure $dp$ and define the modulus $\delta_P$ so that $\delta_P^2\cdot dp$ is right invariant. We can then define the Haar measure of $\mathrm{GL}_n(\Q_p)$ via
\begin{equation*}
  \int_{\mathrm{GL}_n(\mathbb{Q}_p)}f(g) dg = \int_{K_p}\int_{P(\mathbb{Q}_p)} f(kp)\delta_P(p)^2dkdp.
\end{equation*}

Finally, let $K$ be either a finite field or a finite extension of $\mathbb{Q}_p$. Let $\mathcal{P}^0(n)$ be the set of ordered partitions $\alpha=(\alpha_1,\ldots,\alpha_{r(\alpha)})$ of $n$. Thus we have $\alpha_1+\ldots+\alpha_{r(\alpha)} = n$ and $\alpha_1\geq \alpha_2\geq \ldots \alpha_{r(\alpha)}>0$. There is a standard one to one correspondence between $\mathcal{P}^0(n)$ and nilpotent $\mathrm{GL}_n(K)$ orbits in $\mathrm{Mat}_n(K)$, which we write as
\begin{equation}
  \alpha \mapsto \mathcal{O}_{\alpha} = \mathrm{Ad}(\mathrm{GL}_n(K))Y_{\alpha},\nonumber
\end{equation} 
where the matrix $Y_{\alpha}$ is constructed from $\alpha$ as follows. For $1\leq i\leq \alpha_1$ we let $\alpha_i' = \#\{j\in 1,\ldots,r(\alpha)\colon \alpha_j\geq i \}$. Now we let $Y_{\alpha}\in \mathrm{Mat}_n(K)$ be the nilpotent element in Jordan canonical form with block sizes $\alpha_i'$. An important example is $Y_{1,\ldots,1}$, which is the regular nilpotent element in Jordan canonical form.

\section{The spectral estimate}\label{sec_f}
Given a cuspidal automorphic representation, we recall the notation $\pi\mid L^2(X_q)$ and $V_{\pi}$ from \cite[Section~2.6.3]{AB}: roughly speaking, $V_{\pi} = V_{\pi}^{(q)}$ is the de-adelization of the space $\pi^{K(q)^{\natural}}$.  In the following, we will also consider the de-adelization of the slightly larger space $\pi^{K(q')^{\natural}}$ for $q \mid q'$, which we denote by $V_{\pi}^{(q')}$.

Following the discussion in \cite[Section~2.5]{AB}, we define the $N$th Fourier coefficient of a classical cusp form $\varpi\in V_{\pi}^{(q)}$ for $N \in \mathbb{N}^{n-1}$ by
\begin{equation}
  \mathcal{W}_{\varpi}(y,N) = \int_{U(\mathbb{Z})\backslash U(\R)}\varpi(xy)\theta_N(x)^{-1}dx = \frac{A_{\varpi}(N)}{N^{2\eta}}W_{\mu_{\varpi}}(N\cdot \mathrm{y}(y)), \nonumber
\end{equation}
where $\mathrm{y}(\diag(y_1, \ldots, y_n)) = (\frac{y_{n-1}}{y_n}, \ldots, \frac{y_1}{y_2})$ and $\eta = (\frac{1}{2}j(n-j))_{1 \leq j \leq n-1}$ are as in \cite[(2.4) and (2.6)]{AB}, the power $N^{2\eta}$ is taken componentwise, and the archimedean Whittaker function $W_{\mu}$ is given in \cite[(2.16)]{AB}.

The goal of this section is to prove the following crucial estimate, generalizing \cite[Theorem~1.3]{AB}:

\begin{prop}\label{pro_four_coeff}
  There exists $q_0=q_0(n)$, depending only on $n$, so that for each $q \in \mathbb{N}$ and each cuspidal automorphic representation $\pi \mid L^2(X_q)$, we have
  \begin{equation}\label{prop21}
    \sum_{\varpi\in \mathrm{ONB}(V_{\pi}^{(q')})} \vert A_{\varpi}(1)\vert^2 \gg (\Vert\mu_{\pi}\Vert q)^{-\epsilon} \dim(V_{\pi}^{(q)}) \cdot \frac{\mathcal{N}_q}{\mathcal{V}_q},\nonumber
  \end{equation}
  where $q'=q_0\cdot q$ and
  \begin{equation}\label{constants}
    \mathcal{V}_q := [{\rm SL}_n(\mathbb{Z}) : \Gamma(q)] = q^{n^2 - 1+ o(1)}, \quad \mathcal{N}_q := [\Gamma(q)^{\natural} \cap U(\mathbb{Q}) : U(\mathbb{Z})] = q^{n(n-1)(n-2)/6}.
  \end{equation}
\end{prop}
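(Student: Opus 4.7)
The plan is to factorize the global average as a product of local Bessel-type sums via the uniqueness of local Whittaker models, then extract a lower bound at each ramified place from Theorem~\ref{theorem:cnh3r840lw}. Expressing $\varpi$ in its global Whittaker model, $A_\varpi(1)$ is, up to a normalization by a Rankin--Selberg $L$-ratio coming from $\langle \varpi,\varpi\rangle$, a product over all places $v$ of local Whittaker values; Parseval on the finite-dimensional space $V_\pi^{(q')}$ then yields
\begin{equation*}
\sum_{\varpi\in \mathrm{ONB}(V_\pi^{(q')})} |A_\varpi(1)|^2 \;=\; C_\pi \cdot \prod_v S_v,
\end{equation*}
where $S_v = \sum_W |W(1)|^2/\langle W,W\rangle$ runs over an ONB of the local $K_v((q'_v)^{\natural})$-invariants in the Whittaker model, and $C_\pi$ collects the global volume and $L$-value constants.

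The archimedean factor satisfies $S_\infty \gg \Vert \mu_\pi\Vert^{-O(1)}$ by standard estimates on $\GL_n$ Whittaker functions near the identity, and unramified primes $p\nmid q'$ contribute the expected Euler factor via Casselman--Shalika; both are handled as in~\cite{AB} and absorbed into the $(\Vert\mu_\pi\Vert q)^\epsilon$ slack. The new difficulty concentrates at ramified primes $p\mid q'$: we need $S_p$ to be at least of order $\dim(\pi_p^{K_p(q_p)^{\natural}})$ times an appropriate local volume factor. Applying Theorem~\ref{theorem:cnh3r840lw} with $\mathfrak{q} = q'_p \mathfrak{o}$ and $\mathfrak{d}$ the local ideal of the theorem (trivial for $p>2n$, of bounded valuation otherwise -- this is precisely what the global constant $q_0$ is for), one obtains some $W_0\in \pi_p^{K_p(q'_p)^{\natural}}$ with $W_0(1)\neq 0$.

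To amplify this single non-vanishing into a quantitative bound, I would exploit translates: for each $k$ in a set of coset representatives of $K_p(q_p)^{\natural}/K_p(q'_p)^{\natural}$, the vector $\pi_p(k)W_0$ lies in $\pi_p^{K_p(q'_p)^{\natural}}$ and has Whittaker value $W_0(k)$ at the identity. Expanding $S_p$ in a basis containing these translates, the local Whittaker Plancherel identity bounds $S_p$ from below by a suitably normalized integral of $|W_0|^2$ over $K_p(q_p)^{\natural}$; this integral contributes the factor $\dim(\pi_p^{K_p(q_p)^{\natural}})$, while the residual measure-theoretic constants assemble across primes into $\mathcal{N}_q/\mathcal{V}_q$ (the denominator from $\prod_p [K_p:K_p(q_p)^{\natural}]$, the numerator from the $D_q$-conjugated unipotent volume).

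The main obstacle is precisely this amplification step: turning the qualitative non-vanishing provided by Theorem~\ref{theorem:cnh3r840lw} into a quantitative lower bound of optimal order $\dim(V_\pi^{(q)})$. Controlling how translates of a single distinguished Whittaker vector span $\pi_p^{K_p(q_p)^{\natural}}$ becomes delicate when $\pi_p$ is deeply ramified, and this is exactly where the orbit-method and types-theoretic ideas flagged in Section~\ref{sec:cnh2efy71t} -- regular coadjoint orbits and multiplicities in the Gelfand--Graev representation -- should provide the structural input needed to make the bound uniform in $\pi$.
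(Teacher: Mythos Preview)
Your factorization into local Bessel sums $S_v$ is correct and matches the paper's setup. The gap is in where you locate the factor $\dim(V_\pi^{(q)})$: you try to extract it from the ramified local sums $S_p$, and this is why you are forced into the ``amplification'' step you correctly flag as problematic. The paper avoids this entirely.

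In the paper's argument, the factorization (imported from \cite[Prop.~6.1]{AB}) already carries an explicit volume factor $(q')^{n(n-1)/2}$ in front of $\prod_{p\mid q'}\mathcal{S}_p$, coming from the normalization of measures and the unramified Euler product. One then uses the \emph{upper} bound $\dim(V_\pi^{(q')}) \ll (q')^{n(n-1)/2+\epsilon}$ of Marshall--Shin \cite[Lemma~A.1]{MR3981603} to convert this volume factor into $\dim(V_\pi^{(q)})$. With the dimension already accounted for, all that is needed from the ramified places is the bare inequality $\mathcal{S}_p \geq 1$. This is precisely Proposition~\ref{proposition:cngrlcwv3h}, which is strictly stronger than the qualitative nonvanishing of Theorem~\ref{theorem:cnh3r840lw} that you invoke: Theorem~\ref{theorem:cnh3r840lw} gives only $\mathcal{S}_p > 0$ (via the Lapid--Mao identity of Remark~\ref{rem:cnh3r9n3n2}), with no uniform lower bound. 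The quantitative statement $\mathcal{S}_p \geq 1$ comes from the type-theoretic work of \S\ref{sec:cnfm30cpqb}--\ref{sec:ex_reg_ty}, where $\mathcal{S}_p$ is identified with a sum of multiplicities of regular representations of $\GL_n(\mathfrak{o}/\mathfrak{p}^m)$ (Lemma~\ref{lm:red_to_mult}), and one shows at least one such multiplicity is positive (Lemma~\ref{lm:exist_reg}).

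So: drop the amplification-by-translates idea, track the factor $(q')^{n(n-1)/2}$ explicitly in your $C_\pi$, and upgrade your local input from Theorem~\ref{theorem:cnh3r840lw} to Proposition~\ref{proposition:cngrlcwv3h}.
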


\begin{rem}
  We strongly expect that one can actually take $q_0=1$, but it is convenient to use the flexibility to enlarge the level slightly.
  We refer to Remark~\ref{rem:exp_q_0} below for more details. We will see that $q_0=\prod_{p\leq 2n}p^{d}$, for $d$ as in Lemma~\ref{depth_pos_case}, is admissible.
\end{rem}

The proof of Proposition \ref{pro_four_coeff} may be reduced, as in \cite[Section~6.1]{AB}, to a local statement, which we now formulate over any non-archimedean local field $F$ of characteristic zero.  We denote by $\mathfrak{o}$ the ring of integers, $\mathfrak{p}$ its maximal ideal, $\mathfrak{k} := \mathfrak{o} / \mathfrak{p}$ the residue field, and $q$ (resp.\ $p$) the cardinality (resp. characteristic) of $\mathfrak{k}$.  We fix a uniformizer $\varpi$, i.e., a generator of $\mathfrak{p}$, as well as an unramified character $\psi_F : F \rightarrow \U(1)$, thus $\psi_F$ is trivial on $\mathfrak{o}$ but not on $\mathfrak{p}^{-1}$.

The group $\mathrm{GL}_n$ and all relevant subgroups are defined over $F$.  We denote by $K=\mathrm{GL}_n(\mathfrak{o})$ the standard maximal compact subgroup and, for a nonnegative integer $m$, by $K(m) := \ker(K \rightarrow \GL_n(\mathfrak{o}/\mathfrak{p}^m))$ the corresponding principal congruence subgroup.  Of course, if $F=\mathbb{Q}_p$, then $K=K_p$ and $K(m) = K_p(p^m)$. In what follows, we will usually think $m=v_p(q')$.

Let $\pi$ be a generic irreducible unitary representation of $\GL_n(F)$, equipped with an invariant inner product $\langle , \rangle$.  For each nonnegative integer $m$ and nondegenerate unitary character $\boldsymbol{\psi}$ of $U(F)$, we define the Bessel distribution
\begin{equation}\label{eq:cngrzst9kl}
  \mathcal{S}_{\pi,m}(\boldsymbol{\psi})
  :=
  \sum_{v\in \mathrm{ONB}(\pi^{K(m)})}\int_{U(F)}^{\mathrm{st}} \langle \pi(u)v,v\rangle\boldsymbol{\psi}(u)^{-1}du,
\end{equation}
where $\int_{U(F)}^{\mathrm{st}}$ denotes a stable integral in the sense of Lapid--Mao \cite[\S2.1]{MR3267120}.  We are particularly interested in $\boldsymbol{\psi} = \boldsymbol{\psi}_m$ of the form
\begin{equation*}
  \boldsymbol{\psi}_m(u) = \psi_F\left(\varpi^{-m}\sum a_ju_{j,j+1}\right),
\end{equation*}
where $m \in \mathbb{Z}_{\geq 1}$ and  $a_1,\ldots,a_{n-1}\in \mathfrak{o}^{\times}$.  Let $\delta(\pi)$ denote the ``principal depth'' of $\pi$, namely, the smallest nonnegative integer for which the fixed subspace $\pi^{K(\delta(\pi))}$ is nontrivial.  The main local result of this section is then the following.
\begin{prop}\label{proposition:cngrlcwv3h}
  With notation and assumptions as above, there exists $d = d(n,F) \in \mathbb{Z}_{\geq 0}$, depending only upon $n$ and $F$, so that for each natural number $m \geq \delta(\pi) + d$, we have $\mathcal{S}_{\pi, m}(\boldsymbol{\psi}_m) \geq 1$.  Moreover, for $p > 2 n$, we may take $d = 0$.
\end{prop}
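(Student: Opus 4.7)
The plan is to unpack the Bessel distribution in the Whittaker model, reduce the desired lower bound to the existence of a single $K(m)$-fixed vector with nonvanishing Whittaker value at the identity, and construct such a vector via the theory of types combined with rational canonical form.

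\textbf{Whittaker reformulation.}  By the Lapid--Mao formula for the stable integral, for each $v \in \pi$ one has
\[
\int^{\mathrm{st}}_{U(F)} \langle \pi(u)v, v\rangle\, \boldsymbol{\psi}_m(u)^{-1}\, du \;=\; c(\pi)\,|W_v(1)|^2,
\]
where $W_v$ is the Whittaker function of $v$ in the $\boldsymbol{\psi}_m$-model and $c(\pi)>0$ is an explicit positive constant under our normalization of inner products and Haar measures.  Summing over an orthonormal basis of $\pi^{K(m)}$ expresses $\mathcal{S}_{\pi,m}(\boldsymbol{\psi}_m)$ as a positive multiple of $\sum_v |W_v(1)|^2$, so the task reduces to producing a single $v \in \pi^{K(m)}$ with $|W_v(1)|^2$ bounded below by an absolute constant.

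\textbf{Reduction to a regular type.}  Following the orbit-method heuristic sketched in \S\ref{sec:cnh2efy71t}, and the precise form given by Lemma~\ref{lm:red_to_mult}, such a vector exists iff the abelian character $\boldsymbol{\psi}_m$ occurs in $\pi|_{K(m)}$, iff the coadjoint orbit $\mathcal{O}_\pi \subset \mathrm{Mat}_n(F)$ attached to $\pi$ contains, modulo $\mathrm{Mat}_n(\mathfrak{o})$, an element of the regular companion shape
\[
\begin{pmatrix} * & \cdots & \cdots & * \\ \varpi^{-m} a_1 & \ddots & & \vdots \\ & \ddots & \ddots & \vdots \\ & & \varpi^{-m} a_{n-1} & * \end{pmatrix}.
\]
Because $m \geq \delta(\pi) + d$, the definition of the principal depth forces $\mathcal{O}_\pi$ to contain a matrix $X$ with $\varpi^{m} X \in \mathrm{Mat}_n(\mathfrak{o})$, and the genericity of $\pi$ upgrades this to the statement that the image of $\varpi^m X$ in $\mathrm{Mat}_n(\mathfrak{o}/\mathfrak{p}^m)$ is regular.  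Rational canonical form over the local ring $\mathfrak{o}/\mathfrak{p}^m$ then brings $X$ into companion form, and a subsequent conjugation by a permutation matrix and an element of the torus $T(\mathfrak{o})$ normalizes the subdiagonal to any prescribed unit tuple $(a_1,\dots,a_{n-1})\in (\mathfrak{o}^\times)^{n-1}$, producing the desired occurrence and hence the claimed lower bound $\mathcal{S}_{\pi,m}(\boldsymbol{\psi}_m)\geq 1$ after tracking the Lapid--Mao normalization constant.

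\textbf{Role of $d$ and main obstacle.}  The Kirillov/orbit-method identification of characters of $K(k)/K(2k)$ with the additive quotient $\mathrm{Mat}_n(\mathfrak{o}/\mathfrak{p}^{k})$ relies on the truncated $p$-adic exponential applied to $n\times n$ nilpotent matrices; this is well-defined exactly when the factorials $k!$ with $k \leq 2n$ are $p$-adic units, which holds iff $p > 2n$, yielding $d = 0$ in that range.  For smaller $p$ one must instead descend to a deeper Moy--Prasad subgroup before exponentiating, absorbing the $p$-adic valuations of the offending factorials into an additive depth shift.  Executing this shift uniformly, while preserving both the regularity of the coadjoint representative that is needed for rational canonical form and the compatibility with the specific Whittaker character $\boldsymbol{\psi}_m$, is the main technical hurdle; it is precisely what forces the additive constant $d = d(n,F)$ and accounts for the auxiliary factor $q_0=\prod_{p\le 2n}p^d$ in Proposition~\ref{pro_four_coeff}.
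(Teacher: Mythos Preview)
Your opening reduction via Lapid--Mao and Lemma~\ref{lm:red_to_mult} is correct: the task becomes showing that $\mathcal{B}_m(\pi) \subset \mathrm{Mat}_n(\mathfrak{k})$, the set of characters of $K(m-1)/K(m)$ occurring in $\pi^{K(m)}$, contains a regular element. (Your intermediate claim ``$\boldsymbol{\psi}_m$ occurs in $\pi|_{K(m)}$'' is garbled --- $\boldsymbol{\psi}_m$ is a character of $U(F)$, not of $K(m)$ --- but I read it as shorthand for this.)

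The gap is the step ``genericity of $\pi$ upgrades this to the statement that the image of $\varpi^m X$ is regular.'' For supercuspidal $\pi$ on $\mathrm{GL}_n$, genericity is automatic and so carries no information; and even granting a well-defined coadjoint orbit $\mathcal{O}_\pi$ consisting of regular elements of $\mathrm{Mat}_n(F)$, regularity over $F$ does not descend to $\mathfrak{k}$ (e.g.\ $\mathrm{diag}(1,1+\varpi)$ becomes scalar mod~$\mathfrak{p}$). You are promoting the heuristic of \S\ref{sec:cnh2efy71t} to a proof, and the paper explicitly warns that it is not one. Your rational-canonical-form step likewise presupposes the answer, and in any case is a statement over fields, not over the non-reduced ring $\mathfrak{o}/\mathfrak{p}^m$. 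The paper instead supplies this step with substantial machinery: after reducing to supercuspidals, for $m > \rho(\pi)+1$ Howe's local character expansion (Proposition~\ref{how_exp_quant}, Lemma~\ref{depth_pos_case}) forces the regular nilpotent into $\mathcal{B}_m(\pi)$, while the boundary case $m = \rho(\pi)+1$ requires the Howe--Moy Hecke-algebra isomorphism together with twisting and induction on $n$ (Lemma~\ref{expl_depth}). The constraint $p > 2n$ enters through DeBacker's homogeneity theorem (giving $C_0=1$) and through Murnaghan's matching of types in the boundary case, not through convergence of a truncated exponential on nilpotent matrices as you suggest --- though the two are not unrelated, since DeBacker's hypotheses ultimately concern the exponential on the Lie algebra.
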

\begin{rem}\label{rem:cnh3r9n3n2}
  We note that by \cite[Lemma 4.4]{MR3267120}, one has $$\mathcal{S}_{\pi,m}(\psi) = \sum_{v\in \mathrm{ONB}(\pi^{K(m)})} \lvert \ell_{\psi}(v) \rvert^2$$ for a suitable $\psi$-Whittaker functional $\ell_{\psi}$ on $\pi$.  This explains why Proposition~\ref{proposition:cngrlcwv3h} implies Theorem~\ref{theorem:cnh3r840lw}.  (One can directly argue the converse, but we do not need to do so.)
\end{rem}
\begin{proof}[Proof of Proposition~\ref{pro_four_coeff}, assuming Proposition~\ref{proposition:cngrlcwv3h}]
  For any $q_0$, we have, as in the proof of \cite[Prop 6.1]{AB}, the lower bound
  \begin{equation*}
    \sum_{\varpi\in \mathrm{ONB}(V_{\pi}^{(q')})} \vert A_{\varpi}(1)\vert^2 \gg (\Vert\mu_{\pi}\Vert q')^{-\epsilon}   \frac{\mathcal{N}_{q'}}{\mathcal{V}_{q'}}
    (q')^{n(n - 1)/2}
    \prod_{p | q'} \mathcal{S}_p,
  \end{equation*}
  where $\mathcal{S}_p$ is given, for $p^{m_p} || q'$, by
  \begin{equation*}
    \sum_{v\in \mathrm{ONB}(\pi_p^{K_p(p^{m_p})})}\int_{U(\mathbb{Q}_p)}^{\mathrm{st}} \langle \pi_p(u)v,v\rangle\boldsymbol{\psi}_{\mathbb{Q}_p}(D_{q'} u D_{q'}^{-1})^{-1} \, d u.
  \end{equation*}
  Using the definition of $D_{q'}$, we verify readily that $\mathcal{S}_p$ is of the form $\mathcal{S}_{\pi_p, {m_p}}(\boldsymbol{\psi}_{m_p})$, for suitable $a_j \in \mathbb{Z}_p^\times$, chosen so that $\boldsymbol{\psi}_{\Q_p}(D_{q'}^{-1}uD_{q'}) = \boldsymbol{\psi}_m(u)$.\footnote{We remark that there is an unfortunate typo in \cite[Section~6.1]{AB}, where one should essentially replace $D_q$ by $D_q^{-1}$ throughout.  Here we rectify this by modifying the character in the definition of the Bessel distribution.}  We take $q_0 := \prod_{p \leq 2 n} p^{d(n,\mathbb{Q}_p)}$, with exponents as in the conclusion of Proposition \ref{pro_four_coeff}.  Since $\pi \mid L^2(X_q)$, we then have $m_p \geq \delta(\pi_p) + d(n,\mathbb{Q}_p)$.  The hypotheses of that proposition are thus satisfied, so we have $\mathcal{S}_p \geq 1$.  The estimate $(q')^{n(n-1)/2 + \epsilon} \gg \dim(V_{\pi}^{(q')})$ is given by \cite[Lemma A.1]{MR3981603}.  We may swap the constants $\mathcal{N}_{q'}$ and $\mathcal{V}_{q'}$ for their counterparts involving $q$ at the cost of a constant factor, depending only upon $q_0$.  The required estimate follows.
\end{proof}

The proof of Proposition~\ref{proposition:cngrlcwv3h} occupies the remainder of this section.  Many cases of that proposition were established in \cite[\S6]{AB}, and the reductions given there apply here.  In particular, by \cite[Lemma 6.10]{AB}, the proof reduces to the case of supercuspidal $\pi$.  For such $\pi$, we have a mild simplification: matrix coefficients have compact support modulo the center, so the stable integral in \eqref{eq:cngrzst9kl} may replaced by an ordinary integral.  The case $\delta(\pi) = 0$ is trivial (it concerns characters on $\GL_1(F)$), while the case $\delta(\pi) = 1$ (with $d = 0$) follows from \cite[Lemma 6.5]{AB}.  Our remaining task is thus to address the case of supercuspidal $\pi$ with $\delta(\pi) \geq 2$.  The proof is completed in \S\ref{sec:cngrz3r57v} below.


\subsection{Reducing the support of the integral}\label{sec:cnfm30cn1b}
We start by showing that we can replace the integral over $U(F)$ in the definition of $\mathcal{S}_{\pi,p}(\boldsymbol{\psi}_m)$ by an integral over $U(\mathfrak{o})$:
\begin{lemmy}\label{eq:cngv4310dw}
  Let $\pi$ be a supercuspidal representation of $\GL_n(F)$.  For each $m$, we have
  \begin{equation*}
    \mathcal{S}_{\pi,m}(\boldsymbol{\psi}_m) = \sum_{v\in \mathrm{ONB}(\pi^{K(m)})} \int_{U(\mathfrak{o})} \langle \pi(u)v,v\rangle\boldsymbol{\psi}_{m}(u)^{-1}du. 
  \end{equation*}
\end{lemmy}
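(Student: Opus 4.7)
The argument splits in two stages. First, since $\pi$ is supercuspidal, matrix coefficients $g\mapsto\langle\pi(g)v,v\rangle$ are compactly supported modulo $Z(F)$, and because $U(F)\cap Z(F)=\{1\}$ the integrand is compactly supported on $U(F)$; the stable integral therefore collapses to the ordinary integral over $U(F)$. The lemma reduces to showing the tail-vanishing
\begin{equation*}
\int_{U(F)\setminus U(\mathfrak{o})}f(u)\,\boldsymbol{\psi}_m(u)^{-1}\,du=0,\qquad f(u):=\mathrm{Tr}(\pi(u)P_m),
\end{equation*}
where $P_m$ is the orthogonal projection onto $\pi^{K(m)}$.

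Two invariances drive the rest of the proof: because $K$ normalizes $K(m)$, $f$ is invariant under conjugation by $K$, hence under $T(\mathfrak{o})\cup U(\mathfrak{o})$; and $\boldsymbol{\psi}_m$ is trivial on $U(F)\cap K(m)=U(\mathfrak{p}^m)$. The main device is $T(\mathfrak{o})$-averaging: substituting $u\mapsto tut^{-1}$ leaves both $f$ and $du$ unchanged, so averaging over $t\in T(\mathfrak{o})/Z(\mathfrak{o})$ produces the identity
\begin{equation*}
\int_{U(F)}f(u)\boldsymbol{\psi}_m(u)^{-1}\,du=\int_{U(F)}f(u)\prod_{j=1}^{n-1}\Bigl(\tfrac{1}{\mathrm{vol}(\mathfrak{o}^\times)}\int_{\mathfrak{o}^\times}\psi_F(\varpi^{-m}a_ju_{j,j+1}s_j)^{-1}\,ds_j\Bigr)du.
\end{equation*}
Each factor is a Ramanujan-type sum on $\mathfrak{o}^\times$ of conductor $\mathfrak{p}^{m-v_F(u_{j,j+1})}$, which vanishes unless $v_F(u_{j,j+1})\ge m-1$. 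Since $m\ge\delta(\pi)\ge 2$, this forces every superdiagonal entry of $u$ into $\varpi^{m-1}\mathfrak{o}\subseteq\mathfrak{o}$. Applying the same manipulation to $\int_{U(\mathfrak{o})}f\boldsymbol{\psi}_m^{-1}\,du$ yields the same restricted integral of $f$ against the Ramanujan factors, so the difference $\int_{U(F)}-\int_{U(\mathfrak{o})}$ is concentrated on those $u$ whose superdiagonal entries already lie in $\mathfrak{o}$ but whose inner entries do not.

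The remaining off-superdiagonal contribution will be handled by induction along the descending central series $U=U^{(1)}\supset [U,U]=U^{(2)}\supset\cdots$. Within each commutator layer $U^{(k)}/U^{(k+1)}$, the $T(\mathfrak{o})$-invariance of $f$ again factors its dependence through valuation profiles. Although no character of $\boldsymbol{\psi}_m$ is available at inner layers, $U(\mathfrak{o})$-conjugation supplies a substitute through the formula
\begin{equation*}
(u_0uu_0^{-1})_{i,i+k}=u_{i,i+k}+\sum_{i<\ell<i+k}\bigl((u_0)_{i,\ell}u_{\ell,i+k}-u_{i,\ell}(u_0)_{\ell,i+k}\bigr):
\end{equation*}
when the lower-layer entries of $u$ already lie in $\mathfrak{o}$ (the inductive hypothesis), these shifts are integral, so $f$ becomes invariant under $\mathfrak{o}$-translation of each $(i,i+k)$-entry, and a Fourier decomposition on $U^{(k)}/U^{(k+1)}\cong F^{n-k}$ collapses that layer's integration from $F$ down to $\mathfrak{o}$ coordinate by coordinate.

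The main obstacle is the clean execution of this induction: one must verify that at each step the $U(\mathfrak{o})$-conjugation shifts really are $\mathfrak{o}$-integral, which requires propagating the integrality constraint consistently downward through the root structure of $U$ while keeping track of the interaction with the Moy--Prasad filtration of $K(m)$. This combinatorial bookkeeping is the technical heart of the lemma.
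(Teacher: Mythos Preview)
Your first reduction (supercuspidal $\Rightarrow$ ordinary integral) and the $T(\mathfrak{o})$-averaging step for the superdiagonal are fine: averaging $\boldsymbol{\psi}_m(tut^{-1})^{-1}$ over $t\in T(\mathfrak{o})$ does produce Ramanujan factors that force $u_{j,j+1}\in\varpi^{m-1}\mathfrak{o}$. The trouble is entirely in the inner layers $k\ge 2$, where two separate failures occur.

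First, the claimed $\mathfrak{o}$-translation invariance does not follow from your $U(\mathfrak{o})$-conjugation formula. The shift in $u_{i,i+k}$ you wrote down is a sum of products $(u_0)_{i,\ell}u_{\ell,i+k}$ and $u_{i,\ell}(u_0)_{\ell,i+k}$ with $(u_0)\in U(\mathfrak{o})$; letting $(u_0)$ vary, these shifts sweep out only the ideal generated by the neighbouring entries $u_{\ell,i+k},u_{i,\ell}$, not all of $\mathfrak{o}$. At the first inner layer those neighbours lie in $\varpi^{m-1}\mathfrak{o}$, so you only obtain $\varpi^{m-1}\mathfrak{o}$-periodicity, and the situation deteriorates as $k$ grows. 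Second, and more seriously, even if $f$ were $\mathfrak{o}$-periodic in an inner coordinate, that would \emph{not} collapse $\int_F$ to $\int_{\mathfrak{o}}$. There is no character on the inner coordinates, so you are integrating a compactly supported $\mathfrak{o}$-periodic function against the trivial character; such an integral equals $\int_{\mathfrak{o}}$ times the number of $\mathfrak{o}$-cosets in the support, not $\int_{\mathfrak{o}}$ itself. No ``Fourier decomposition'' changes this. Already for $n=3$ one cannot conclude anything about $\int_{u_{13}\in F\setminus\mathfrak{o}}f(u)\,du_{13}$ from the invariances you invoke.

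What is missing is a mechanism that manufactures a character on the inner coordinates. The paper does this by exploiting bi-$K(m)$-invariance of $f$ (not merely $K$-conjugation invariance): it peels off the last column $W(F)\cong F^{n-1}$ and conjugates $w\in W(F)$ by lower-triangular elements $a(\varpi^m\mathbf{y})\in K(m)$. Because $a(\varpi^m\mathbf{y})$ normalises $W$ and the $(n-1,n)$-entry of $a(\varpi^m\mathbf{y})wa(\varpi^m\mathbf{y})^{-1}$ acquires the term $\varpi^m\mathbf{y}'\cdot\mathbf{w}'$, the character $\boldsymbol{\psi}_m$ picks up $\psi_F(a_{n-1}\mathbf{y}'\cdot\mathbf{w}')$; integrating over $\mathbf{y}\in\mathfrak{o}^{n-1}$ then forces \emph{every} entry of $\mathbf{w}$ into $\mathfrak{o}$, and an induction on $n$ handles $U^{(n-1)}$. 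Your $U(\mathfrak{o})$-conjugation never interacts with $\boldsymbol{\psi}_m$ in this way, which is why the argument stalls.
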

\begin{proof}
  The proof is a slight refinement of \cite[Proposition~3.1]{La}. The argument relies purely on bi-$K(m)$-invariance of the matrix coefficients and character orthogonality.
  
  We start by giving the argument for the case of $\mathrm{GL}_2$, where our main trick already becomes obvious. Let $A$ denote the subgroup of $T$ given by matrices of the form $a(y)=\diag(a,1)$. Further, let $f(g)=\langle \pi(g)v,v\rangle$. We will only use that $f(g)$ is bi-$A(1+\mathfrak{p}^m)$-invariant. Put $n(x) = \left(
    \begin{smallmatrix}
      1&x\\0&1
    \end{smallmatrix}
  \right)$. Now we simply compute
  \begin{align}
    &\int_{U(F)}\langle \pi(u)v,v\rangle\boldsymbol{\psi}_{m}(u)^{-1}du = \int_{F}f(n(x))\psi_{F}(a_1x\varpi^{-m})dx \nonumber\\
    &\quad = \int_{\mathfrak{o}}\int_{F}f(a(1+\varpi^my)^{-1}n(x)a(1+\varpi^my))\psi_{F}(a_1x\varpi^{-m})dx\, dy \nonumber\\
    &\quad = \int_{\mathfrak{o}}\int_{F}f(n((1+\varpi^my)^ {-1}x))\psi_{F}(a_1x\varpi^{-m})dx\, dy \nonumber\\
    &\quad = \int_{F}f(n(x))\psi_{F}(a_1x\varpi^{-m})\int_{\mathfrak{o}}\psi_{F}(a_1yx)dy\, dx = \int_{\mathfrak{o}}f(n(x))\psi_{F}(a_1x\varpi^{-m})dx.\nonumber
  \end{align}
  This is exactly the statement we are after.

  Now we turn towards the general case. We use the usual embedding
  \begin{equation*}
    \mathrm{GL}_{n-1}(F)\ni h \mapsto \left(
      \begin{matrix}
        h&0\\0&1
      \end{matrix}
    \right)\in \mathrm{GL}_n(F)
  \end{equation*}
  and identify $\mathrm{GL}_{n-1}(F)$ with its image. Let $U^{(n-1)}(F)=U(F)\cap \mathrm{GL}_{n-1}(F)$ and set
  \begin{equation}
    W(F) = \left\{ w=\left(
        \begin{matrix}
          I_{n-1} & \mathbf{w}\\0&1 
        \end{matrix}
      \right)\colon \mathbf{w}\in F^{n-1}\right\}.\nonumber
  \end{equation}
  Now define
  \begin{equation}
    f(h) = \int_{W(F)}\langle \pi(hw)v,v\rangle\boldsymbol{\psi}_{m}(w)^{-1}dw.\nonumber
  \end{equation}
  In particular, we can write
  \begin{equation}
    \int_{U(F)}\langle \pi(u)v,v\rangle\boldsymbol{\psi}_{m}(u)^{-1}du = \int_{U^{(n-1)}(F)}f(u)\boldsymbol{\psi}_{m}(u)^{-1}du. \nonumber
  \end{equation}
  Note that $f$ (as a function on $\mathrm{GL}_{n-1}(F)$) is bi-invariant under
  \begin{equation}
    \left\{ \left(
        \begin{matrix}
          k&0\\0&1
        \end{matrix}
      \right)\colon k\equiv I_{n-2} \text{ mod }\mathfrak{p}^m  \right\} \subseteq \mathrm{GL}_{n-1}(F).\nonumber
  \end{equation}
  To see this, put $k_i=\left(
    \begin{matrix}
      k_i' & 0 \\ 0& I_2
    \end{matrix}
  \right)$ with $k_i'\equiv I_{n-2} \text{ mod }\mathfrak{p}^m$, $i=1,2$ and write $\mathbf{w}=\left(
    \begin{matrix}
      \mathbf{w}' \\ w_{n-1}
    \end{matrix}
  \right)$. Since $k_1,k_2\in K(m)$ we have
  \begin{align}
    f(k_1hk_2) &= \int_{W(F)}\langle \pi(hk_2wk_2^ {-1})v,v\rangle\boldsymbol{\psi}_{m}(w)^{-1}dw \nonumber\\
               &
                 = \int_{W(F)}\langle \pi(hw)v,v\rangle\boldsymbol{\psi}_{m}(w)^{-1}dw = f(h).\nonumber
  \end{align}
  In the middle step, we have used that conjugating $w$ with $k_2$ leaves $w_{n-1,n}$ untouched:
  \begin{equation}
    k_2wk_{2}^{-1} = \left(
      \begin{matrix}
        I_{n-2} & 0 & k_2'\mathbf{w}' \\ 0&1&w_{n-1,n} \\0&0 &1
      \end{matrix}
    \right).\nonumber
  \end{equation}

  This invariance of $f$ allows us to argue inductively with $n=2$ being the initial case. We arrive at
  \begin{equation}
    \int_{U^{(n-1)}(F)}f(u)\boldsymbol{\psi}_{m}(u)^{-1}du=\int_{U^{(n-1)}(\mathfrak{o})}f(u)\boldsymbol{\psi}_{m}(u)^{-1}du.\nonumber
  \end{equation}
  This implies that
  \begin{equation*}
    \int_{U(F)}\langle \pi(u)v,v\rangle\boldsymbol{\psi}_{m}(u)^{-1}du = \int_{U^{n-1}(\mathfrak{o})}\boldsymbol{\psi}_{m}(u)^{-1}\int_{W(F)}\langle \pi(uw)v,v\rangle\boldsymbol{\psi}_{m}^{-1}(w)dw\, du. 
  \end{equation*}
  We are done as soon as we can show that the integral over $W(F)$ can be replaced by an integral over $W(\mathfrak{o})$. Given $\mathbf{y} = \left(
    \begin{matrix}
      y_1,\ldots,y_{n-1}
    \end{matrix}
  \right) = \left(
    \begin{matrix}
      \mathbf{y}' & y_{n-1}
    \end{matrix}
  \right)\in \mathfrak{o}^{n-1}$ we define
  \begin{equation}
    a(\varpi^m\mathbf{y}) =
    \left(
      \begin{matrix}
        1_{n-2} & 0 & 0\\ \varpi^m\mathbf{y}' & 1+\varpi^my_{n-1} & 0\\0&0&1  
      \end{matrix}
    \right)\in K(m)\cap \mathrm{GL}_{n-1}(F).\nonumber 
  \end{equation}
  For $u\in U^{n-1}(\mathfrak{o})\subseteq K$ and $\mathbf{y}\in \mathfrak{o}^{n-1}$ we have
  \begin{equation*}
    ua(\varpi^m\mathbf{y})^{-1}u^{-1} \in K(m)\cap \mathrm{GL}_{n-1}(F). 
  \end{equation*}
  This allows us to compute
  \begin{align}
    &\int_{W(F)}\langle \pi(uw)v,v\rangle\boldsymbol{\psi}_{m}(w)^{-1}dw \nonumber \\
    &= \int_{W(F)}\int_{\mathfrak{o}^{n-1}}\langle \pi(ua(\varpi^m\mathbf{y})^{-1}wa(\varpi^m\mathbf{y}))v,v\rangle\boldsymbol{\psi}_{mp}(w)^{-1}d\mathbf{y}\,dw \nonumber \\
    &= \int_{W(F)}\int_{\mathfrak{o}^{n-1}}\langle \pi(uw)v,v\rangle\boldsymbol{\psi}_{m}(a(\varpi^m\mathbf{y})wa(\varpi^m\mathbf{y})^{-1})^{-1}d\mathbf{y}\, dw \nonumber \\
    &= \int_{W(F)}\int_{\mathfrak{o}^{n-1}}\langle \pi(uw)v,v\rangle\psi_{F}(a_{n-1}(\mathbf{y}'\cdot \mathbf{w}'+y_{n-1}w_{n-1}+\varpi^{-m}w_{n-1}))^{-1}d\mathbf{y}\, dw \nonumber\\
    &= \int_{W(F)}\langle \pi(uw)v,v\rangle\boldsymbol{\psi}_{m}(w)^{-1}\int_{\mathfrak{o}^{n-1}}\psi_{F}(a_{n-1}(\mathbf{y}'\cdot \mathbf{w}'+y_{n-1}w_{n-1}))^{-1}d\mathbf{y} \, dw \nonumber \\
    &=\int_{W(\mathfrak{o})}\langle \pi(uw)v,v\rangle\boldsymbol{\psi}_{m}(w)^{-1}dw.\nonumber
  \end{align}
  In the last step we have simply used (additive) character orthogonality. Combining the $W$ and the $U^{n-1}$-integral completes the proof.
\end{proof}

\subsection{Regular representations of \texorpdfstring{$K$}{K}}\label{sec:cnfm30cpqb}
Let $\pi$ be a smooth admissible representation of $\GL_n(F)$, and let $m \geq 1$.  Since $K(m)$ is normal in $K$, the space $\pi^{K(m)}$ is $K$-invariant, so we may view it as a representation of $K$, or of the quotient
\begin{equation*}
  G_m = \mathrm{GL}_n(\mathfrak{o}/\mathfrak{p}^m) \cong K/K(m).
\end{equation*}
We can decompose the latter representation into irreducible constituents:\footnote{We will not distinguish between a representation of $G_m$ and its lift to $K$. Hopefully this will not lead to confusion.}
\begin{equation}
  \pi^{K(m)} = \bigoplus_{\sigma\in \widehat{G_m}} m_{\pi,m}(\sigma)\cdot \sigma.\nonumber
\end{equation}
Since $\pi$ is admissible, almost all of the multiplicities $m_{\pi,m}(\sigma)$ are zero.

Assume henceforth that $m \geq 2$.  The quotient $K(m-1)/K(m)$ is then abelian, and we have an isomorphism
\begin{equation}\label{eq:cngrzw1yis}
  \Mat_n(\mathfrak{k}) \rightarrow \text{character group of } K(m-1) / K(m)  
\end{equation}
given by $\beta \mapsto \theta_\beta$, where
\begin{equation*}
  \theta_{\beta}(x) := \psi_F\left(\mathrm{tr}\Big(\beta \frac{x-1}{\varpi^m}\Big)\right).
\end{equation*}
Strictly speaking, we first lift $\beta$ to an element of $\Mat_n(\mathfrak{o})$, then observe that the above formula defines a character of $K(m-1)$, independent of the choice of lift and trivial on $K(m)$.

\begin{rem}\label{remark:cngrk8lcsv}
  Characters of the form $\theta_{\beta}$ (and certain generalizations) play a major role in type theory and in the construction of supercuspidal representations for $\mathrm{GL}_n(F)$, as described for example in \cite{BK}. The construction is usually specified in terms of an additive character $\tilde{\psi}_F$ of conductor $\mathfrak{p}_F$.  Here we work with the unramified character $\psi_F$, following \cite{How3} for example. The price is a simple shift in indices that must be taken into account when to referring to certain parts of the literature.
\end{rem}

There are several equivalent characterizations of when an element $\beta$ of $\Mat_n(\mathfrak{k})$ is \emph{regular}.  One is that the centralizer $C_{\mathrm{Mat}_n(\mathfrak{k})}(\beta)$ has dimension $n$; another is that $\beta$ admits a cyclic vector (cf.\ \cite[Section~3]{HN}).  Following \cite{Hi}, we make the following definition.

\begin{defn}
  For $m > 1$, an irreducible representation $\sigma$ of $G_m$ is \emph{regular} if $\sigma \vert_{K(m-1)}$ contains $\theta_{\beta}$ for some regular ${\beta}\in \mathrm{Mat}_{n}(\mathfrak{k})$.
\end{defn}

Regular representations play an important role in the representation theory of $\mathrm{GL}_n$ over finite local principal ideal rings.  They can be constructed using, e.g., ideas from Kirillov's orbit method.  We refer to \cite{St} for a nice survey.

We are now ready to prove the central result of this section:
\begin{lemmy}\label{lm:red_to_mult}
  For $m \geq 2$, we have
  \begin{equation}
    \mathcal{S}_{\pi,m}(\boldsymbol{\psi}_m) = M(\pi,m), \qquad  M(\pi,m) := \sum_{\substack{\sigma\in \widehat{G_m}\\ \text{\rm regular}}} m_{\pi,m}(\sigma).  \nonumber
  \end{equation}
\end{lemmy}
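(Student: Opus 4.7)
The plan is to expand $\mathcal{S}_{\pi,m}(\boldsymbol{\psi}_m)$ into characters of $G_m = K/K(m)$ and reduce the statement to the decomposition of the Gelfand--Graev representation of $\GL_n(\mathfrak{o}/\mathfrak{p}^m)$. Since $\pi$ is supercuspidal, Lemma~\ref{eq:cngv4310dw} lets me replace the stable integral over $U(F)$ by a compact integral over $U(\mathfrak{o})$. Because $\pi^{K(m)}$ is finite-dimensional (by admissibility), I swap the sum over the orthonormal basis with the integral and recognize $\sum_v \langle \pi(u)v,v\rangle$ as $\tr(\pi(u)|_{\pi^{K(m)}})$. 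Using the decomposition $\pi^{K(m)} = \bigoplus_{\sigma} m_{\pi,m}(\sigma)\,\sigma$, this yields
\[
  \mathcal{S}_{\pi,m}(\boldsymbol{\psi}_m) = \sum_{\sigma\in\widehat{G_m}} m_{\pi,m}(\sigma) \int_{U(\mathfrak{o})} \chi_\sigma(u)\,\boldsymbol{\psi}_m(u)^{-1}\, du,
\]
where $\chi_\sigma$ denotes the character of $\sigma$.

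Next I observe that $\boldsymbol{\psi}_m$ is trivial on $U(\mathfrak{p}^m)$ (since $\psi_F$ is trivial on $\mathfrak{o}$ and $\varpi^{-m}\mathfrak{p}^m = \mathfrak{o}$), so both $\chi_\sigma$ and $\boldsymbol{\psi}_m$ factor through the finite quotient $U_m := U(\mathfrak{o}/\mathfrak{p}^m)$. With the measure normalized so that $\Vol(U(\mathfrak{o}))=1$, the integral becomes a standard finite-group character inner product, which by Frobenius reciprocity equals $\bigl[\sigma : \Ind_{U_m}^{G_m}\boldsymbol{\psi}_m\bigr]$. To close the argument, I would invoke the theorem of Hill~\cite{Hi} (extending the classical Gelfand--Graev result of the finite-field case $m=1$) that the induced representation $\Ind_{U_m}^{G_m}\boldsymbol{\psi}_m$ of $\GL_n(\mathfrak{o}/\mathfrak{p}^m)$ is multiplicity-free, with irreducible constituents equal to precisely the regular representations of $G_m$ in the sense of the above definition. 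Granting this, the multiplicity equals $1$ when $\sigma$ is regular and $0$ otherwise, and summing over $\sigma$ recovers $M(\pi,m)$.

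The main obstacle is the Gelfand--Graev input: the bridge between the appearance of $\boldsymbol{\psi}_m$ in $\sigma|_{U_m}$ and the existence of a regular character $\theta_\beta$ in $\sigma|_{K(m-1)}$ is where the nontrivial orbit-theoretic content enters, and it is this identification that gives the connection between ``regular'' in the sense of \cite{Hi} and regular in the sense defined here via $\theta_\beta$. All remaining steps---the sum/integral swap, the reduction modulo $\mathfrak{p}^m$, and finite-group character orthogonality---are routine; one should simply verify that the Haar measure convention built into \eqref{eq:cngrzst9kl} gives $\Vol(U(\mathfrak{o}))=1$ in this application, so that no stray volume factor disrupts the clean identity.
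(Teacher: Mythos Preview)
Your proof is correct and follows essentially the same route as the paper's: reduce to $U(\mathfrak{o})$ via Lemma~\ref{eq:cngv4310dw}, pass to the finite quotient $G_m$, decompose $\pi^{K(m)}$ over $\widehat{G_m}$, and interpret the resulting integral as the multiplicity of $\sigma$ in the Gelfand--Graev representation $\Ind_{U_m}^{G_m}\boldsymbol{\psi}_m$ via Frobenius reciprocity. The only refinement concerns the citation for the key Gelfand--Graev input you flag as the ``main obstacle'': the paper does not attribute the full statement to Hill alone, but combines \cite[Proposition~5.7]{Hi} with the more recent multiplicity-one theorem \cite[Theorem~1.1]{PS} of Patel--Singla to conclude that $\dim_{\mathbb{C}}\Hom_{G_m}(\sigma,\mathcal{G}_{\widetilde{\boldsymbol{\psi}}})$ equals $1$ for regular $\sigma$ and $0$ otherwise.
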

The proof has some similarities to the proof of \cite[Lemma~6.5]{AB}. However, we use deep facts concerning generalized Gelfand--Graev representations for $\mathrm{GL}_n(\mathfrak{o}/\mathfrak{p}^m)$.
\begin{proof}
  As in \cite{AB}, we note that the character $\boldsymbol{\psi}_{m}$ is trivial on $U(\mathfrak{p}^m)$ and descends to a non-degenerate character of $U(\mathfrak{o}/\mathfrak{p}^m)$, which we denote by $\widetilde{\boldsymbol{\psi}}$.  Using that $\mathcal{S}_{\pi,m}(\boldsymbol{\psi}_m)$ is given by an integral over $U(\mathfrak{o})$ (Lemma~\eqref{eq:cngv4310dw}), we can write
  \begin{align}
    \mathcal{S}_{\pi,m}(\boldsymbol{\psi}_m) &= |U(\mathfrak{o}/\mathfrak{p}^m)|^{-1} \sum_{v\in \mathrm{ONB}(\pi^{K(m)})} \sum_{u\in U(\mathfrak{o}/\mathfrak{p}^m)} \langle \pi(u)v,v\rangle\widetilde{\boldsymbol{\psi}}(u)^{-1} \nonumber\\
                                             &=|U(\mathfrak{o}/\mathfrak{p}^m)|^{-1} \sum_{\sigma\in \widehat{G_m}} m_{\pi,m}(\sigma)\sum_{u\in U(\mathfrak{o}/\mathfrak{p}^m)} \mathrm{tr}(\sigma(u))\widetilde{\boldsymbol{\psi}}(u)^{-1}\nonumber \\
                                             &= \sum_{\sigma\in \widehat{G_m}} m_{\pi,m}(\sigma) \langle \chi_{\sigma}\vert_{U(\mathfrak{o}/\mathfrak{p}^m)},\widetilde{\boldsymbol{\psi}}\rangle_{U(\mathfrak{o}/\mathfrak{p}^m)}.\nonumber
  \end{align}
  Here we write $\chi_{\sigma}$ for the character of $\sigma$ and for a finite group $H$ we set $\langle f,g\rangle_H = \frac{1}{\# H}\sum_{h\in H}f(h)\overline{g(h)}$. This brings us in a situation where we can apply character theory.

  Let $\mathcal{G}_{\widetilde{\boldsymbol{\psi}}}=\mathrm{Ind}_{U(\mathfrak{o}/\mathfrak{p}^m)}^{G_m}(\widetilde{\boldsymbol{\psi}})$ denote the Gelfand--Graev representation of $G_m$, and write $\widetilde{\boldsymbol{\psi}}^{G_m}$ for the associated character.  Using Frobenius Reciprocity, we obtain
  \begin{equation}
    \langle \chi_{\sigma}\vert_{U(\mathfrak{o}/\mathfrak{p}^m)},\widetilde{\boldsymbol{\psi}}\rangle_{U(\mathfrak{o}/\mathfrak{p}^m)} = \langle \chi_{\sigma},\widetilde{\boldsymbol{\psi}}^{G_m}\rangle_{G_m} = \dim_{\mathbb{C}} \Hom_{G_m}(\sigma,\mathcal{G}_{\widetilde{\boldsymbol{\psi}}}).\nonumber
  \end{equation}
  By combining \cite[Proposition~5.7]{Hi} and \cite[Theorem~1.1]{PS}, we find that
  \begin{equation}
    \dim_{\mathbb{C}} \Hom_{G_m}(\sigma,\mathcal{G}_{\widetilde{\boldsymbol{\psi}}}) = 
    \begin{cases}
      1 &\text{ if $\sigma$ is regular,}\\
      0&\text{ else.}
    \end{cases}
    \nonumber
  \end{equation}
  Putting everything together yields
  \begin{align}
    \mathcal{S}_{\pi,m}(\boldsymbol{\psi}_m) &= \sum_{\sigma\in \widehat{G_m}} m_{\pi,m}(\sigma) \langle \chi_{\sigma}\vert_{U(\mathfrak{o}/\mathfrak{p}^m)},\widetilde{\boldsymbol{\psi}}\rangle_{U(\mathfrak{o}/\mathfrak{p}^m)} \nonumber\\
                                             &= \sum_{\sigma\in \widehat{G_m}} m_{\pi,m}(\sigma)\cdot \dim_{\mathbb{C}} \Hom_{G_m}(\sigma,\mathcal{G}_{\widetilde{\boldsymbol{\psi}}})= \sum_{\substack{\sigma\in \widehat{G_m},\\ \text{regular}}} m_{\pi,m}(\sigma)\nonumber
  \end{align}
  and the proof is complete.
\end{proof}

This last result tells us that in order to show $\mathcal{S}_{\pi,m}(\boldsymbol{\psi}_m)\geq 1$, it suffices to exhibit a regular representation $\sigma\in \widehat{G_m}$ with $m_{\pi,m}(\sigma)\neq 0$. This is an algebraic question that we address in the following subsection.

\subsection{Regular types in supercuspidal representations}\label{sec:ex_reg_ty}
In the previous sections, we have reduced the problem of estimating $\mathcal{S}_{\pi,m}(\boldsymbol{\psi}_m)$ to a purely algebraic one concerning the representation $\pi$, closely related to the theory of types and to the construction and classification of supercuspidal representations as developed in \cite{BK}, for instance.  However, it turns out that our problem is slightly different in flavor.  In fact, it leads to some phenomenon that we find interesting in their own right, cf.\ Remark \ref{rem28} below. 

\begin{lemmy}\label{lm:exist_reg}
  There exists $d = d(n,F) \in \mathbb{Z}_{\geq 0}$ with the following property.  Let $\pi$ be an irreducible supercuspidal representation of $\GL_n(F)$, and let $m \geq 2$ with $m \geq \delta(\pi) + d$ (i.e., $\pi^{K(m - d)}\neq \{0\}$).  Then there exists a regular representation $\sigma\in \widehat{G_m}$ with $m_{\pi,m}(\sigma)>0$.  Moreover, if the residual characteristic of $F$ satisfies $p>2n$, then we may take $d = 0$.
\end{lemmy}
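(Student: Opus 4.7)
The plan is to reduce the existence of a regular $K$-type in $\pi^{K(m)}$ to an algebraic statement about characters of the abelian quotient $K(m-1)/K(m) \cong \Mat_n(\mathfrak{k})$, then to combine the Bushnell--Kutzko structure theory for supercuspidals with Hill's orbit-method construction of regular representations of $G_m$.  First, I would decompose $\pi^{K(m)}$ under $K(m-1)/K(m)$: via \eqref{eq:cngrzw1yis} the characters occurring are of the form $\theta_\beta$ for $\beta$ ranging over a $\GL_n(\mathfrak{k})$-stable subset $\Omega_\pi \subseteq \Mat_n(\mathfrak{k})$ (under the adjoint action).  Hill's classification (cf.\ \cite{Hi, St}) attaches to each regular $\GL_n(\mathfrak{k})$-orbit a unique regular representation of $G_m$, and by Frobenius reciprocity such a regular $\sigma \in \widehat{G_m}$ appears in $\pi^{K(m)}$ as soon as $\Omega_\pi$ contains some regular element, i.e., a $\beta$ admitting a cyclic vector in $\mathfrak{k}^n$.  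The problem is thus reduced to exhibiting a regular element of $\Omega_\pi$.

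Next, because $\pi$ is supercuspidal of depth $\delta(\pi) \leq m-d$, the Bushnell--Kutzko theory of simple strata associates to $\pi$ a field extension $E = F[\beta_\pi]$ with $[E:F] \mid n$ and a simple stratum $[\mathfrak{A}, \ell, 0, \beta_\pi]$ whose simple character is contained in $\pi$.  Unpacking the restriction of this simple character to $K(m-1)$ identifies a suitable scalar shift $\varpi^{k_m} \overline{\beta_\pi}$ as an element of $\Omega_\pi$, for an integer $k_m$ dictated by the level $m$ and the conductor of $\pi$.  In the regime $p > 2n$, every extension of $F$ of degree $\leq n$ is tamely ramified; a direct application of rational canonical form shows that the reduction of a prime element of $\mathfrak{o}_E$ has irreducible minimal polynomial of degree $[E:F]$ over $\mathfrak{k}$, hence is cyclic in $\Mat_n(\mathfrak{k})$.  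This already produces a regular element of $\Omega_\pi$ at $m = \delta(\pi)$, yielding $d = 0$.

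For arbitrary residual characteristic, wild ramification can destroy the regularity of the naive reduction of $\beta_\pi$.  To compensate, I would exploit the flexibility of having $m \geq \delta(\pi) + d$: passing to a deeper level amounts to rescaling $\beta_\pi$ by $\varpi^d$, which moves the relevant class past the wild part of the conductor filtration into its tame layer.  Since there are only finitely many isomorphism classes of extensions of $F$ of degree $\leq n$, and each has bounded wild ramification (controlled by standard higher-ramification estimates, e.g.\ on the different), a uniform $d = d(n,F)$ suffices to guarantee that the shifted representative has regular reduction, and hence $\Omega_\pi$ contains a regular element.

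The main obstacle is the second step: rigorously identifying which $\beta$'s actually lie in $\Omega_\pi$, rather than just in a larger $\GL_n(\mathfrak{k})$-invariant set predicted by abstract type theory.  This requires a careful compatibility argument between the restriction of the simple character of $\pi$ to successive principal congruence subgroups and the orbit-method parametrization of regular representations of $G_m$.  Once this book-keeping is done, the division into tame ($p > 2n$) and wild cases, together with the uniform ramification bound, delivers the lemma with a single constant $d = d(n,F)$.
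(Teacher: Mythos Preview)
Your reduction to finding a regular element in the set $\Omega_\pi = \mathcal{B}_m(\pi)$ of $\beta \in \Mat_n(\mathfrak{k})$ with $\theta_\beta$ occurring in $\pi^{K(m)}|_{K(m-1)}$ is correct and matches the paper.  However, your proposed method for \emph{producing} such a regular element has a genuine gap in the most important regime.

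The simple-stratum element $\beta_\pi$ lives (roughly) in $\mathfrak{p}^{-(\rho(\pi)+1)} \Mat_n(\mathfrak{o})$, and the character it induces on $K(m-1)/K(m)$ is $\theta_{\bar\gamma}$ with $\gamma = \varpi^{m}\beta_\pi$.  When $m > \rho(\pi)+1$ (equivalently $m > \delta(\pi)-1$, which is the \emph{generic} case and the only one relevant when $d > 0$), we have $\gamma \in \mathfrak{p}$, so $\bar\gamma = 0$.  More generally, Howe--Moy show that every $\beta \in \mathcal{B}_m(\pi)$ is \emph{nilpotent} in this range.  Thus your ``reduction of $\beta_\pi$'' produces nothing, and no amount of rescaling by $\varpi^d$ will help: rescaling only pushes the reduction further into $\mathfrak{p}$.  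The paper instead invokes the quantitative local character expansion of Howe and DeBacker (Proposition~\ref{how_exp_quant}) to compute directly that the multiplicity of $\theta_{Y_{(1,\dots,1)}}$ (the regular nilpotent) in $\pi^{K(m)}$ equals $c_{\mathfrak{o}_{(1,\dots,1)}}(\pi)$ times a positive constant, hence is positive.  This analytic input is the heart of Lemma~\ref{depth_pos_case}, and nothing in your proposal substitutes for it.

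Even at the boundary $m = \rho(\pi)+1$, where $\mathcal{B}_m(\pi)$ consists of non-nilpotent (standard minimal) elements and your simple-stratum idea is on firmer ground, the argument you sketch is incomplete.  Your claim that for $p > 2n$ ``the reduction of a prime element of $\mathfrak{o}_E$ has irreducible minimal polynomial of degree $[E:F]$'' is false when $E/F$ is totally (tamely) ramified, since then $\mathfrak{k}_E = \mathfrak{k}$.  And even when the reduction $\bar\beta_\pi$ is non-nilpotent with minimal polynomial of degree $[E:F]$, if $[E:F] < n$ this element need not be cyclic in $\Mat_n(\mathfrak{k})$.  The paper handles this case (Lemma~\ref{expl_depth}) by an induction on $n$: when $[E:F] < n$ one uses the Howe--Moy Hecke-algebra isomorphism to transfer the problem to a supercuspidal of $\GL_r(E')$ with $r < n$, and when $[E:F] = 1$ one twists by a character to lower the depth and reduce to the already-established deep-level case.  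Your uniform ramification bound does not address either mechanism.
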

\begin{proof}
  We outline the proof, whose details occupy the remainder of this subsection.

  We denote in what follows by $\rho(\pi)$ the \emph{depth} of $\pi$, defined (as in, e.g., \cite[\S5]{BH}) to be the minimum of $j/e$ taken over integers $j \geq 0$ and hereditary orders $(\mathfrak{A}, \mathfrak{P}, e)$ for which $\pi$ has a nonzero vector fixed by $1 + \mathfrak{P}^{j+1}$.

  We note that the depth $\rho(\pi)$ determines the ``principal depth'' $\delta(\pi)$ by the formula $\delta(\pi) = \lceil \rho(\pi) \rceil + 1$ (see \cite[Lem 4]{BH} or \cite[Proposition~2.1]{MY}).  In particular, the hypothesis $m \geq \delta(\pi) + d$ is equivalent to $m \geq \rho(\pi)+1 + d$.
  
  In the case $p \leq 2 n$, we appeal directly to Lemma~\ref{depth_pos_case}, below, using the value of $d$ given there.

  In the case $p > 2 n$ (so that $d =0$), our assumption $m \geq \delta(\pi)$ translates to $m \geq \rho(\pi) + 1$, and the conclusion follows immediately from Lemma~\ref{depth_pos_case} (when $m > \rho(\pi) + 1$) and Lemma~\ref{expl_depth} (when $m = \rho(\pi) + 1$).
\end{proof}

\begin{rem}
  Note that in \cite[Section~8.8]{HN}, it is verified that certain supercuspidal representations are what the authors call \textit{regular at depth $\mathfrak{q}^2$}.  This is a strong property, which in particular implies certain instances of the lemma above. However, for our application, it is important that our conclusion holds for \emph{all} supercuspidal representations $\pi$.
\end{rem}

\begin{rem}\label{rem28}
  The statement of Lemma~\ref{lm:exist_reg} suffices for our purposes, but we expect for any supercuspidal representation $\pi$, we have
  \begin{equation}\label{eq:cng2k3qbne}
    M(\pi,m)=
    \begin{cases}
      1 &\text{ if }m\geq \rho(\pi)+1,\\ 0&\text{ else.} 
    \end{cases} 
  \end{equation}
  As we will see below (Remark~\ref{rem:exp_q_0}), the lower bound $\geq$ in \eqref{eq:cng2k3qbne} can be deduced from the Hales--Moy--Prasad conjecture; we expect the upper bound also follows, with further care. 
\end{rem}

The rest of this subsection is devoted to proving Lemma~\ref{lm:exist_reg}.  Recall the isomorphism $\beta \mapsto \theta_\beta$ from \eqref{eq:cngrzw1yis}.  Let $\beta \in \Mat_n(\mathfrak{k})$.  We call $\theta_\beta$ \emph{standard minimal} if $\beta$ is non-nilpotent, or equivalently, if the corresponding coset in $\Mat_n(\mathfrak{o})$ does not contain nilpotent elements.

\begin{rem}\label{remark:cngrk8lkkn}
  Our notion of ``standard minimal'' is from \cite{HM}.  Since we are assuming $m>1$, the extraordinary case that would appear for $m=1$ does not need to be considered at the moment.  We have included the adjective ``standard'' following the terminology of \cite[\S4]{Mur}.  
\end{rem}

We may decompose the fixed subspace $\pi^{K(m)}$ as a direct sum of one-dimensional spaces on which $K(m-1)$ acts by the characters $\theta_\beta$, each occurring with some multiplicity:
\begin{equation*}
  \pi^{K(m)} = \bigoplus_{\beta \in \Mat_n(\mathfrak{k})} m_{\pi, m}(\beta) \cdot  \theta_\beta,
  \qquad
  m_{\pi, m}(\beta) \in \mathbb{Z}_{\geq 0}.
\end{equation*}
We denote by $\mathcal{B}_m(\pi)$ the set of $\beta$ for the multiplicity is positive, or equivalently, for which $\theta_\beta$ occurs in the restriction of $\pi^{K(m)}$ to $K(m-1)$.  To prove Lemma~\ref{lm:exist_reg}, it suffices to show that $\mathcal{B}_m(\pi)$ contains a regular element. This will be done in several steps.

An important tool in the proof is a quantitative version of the local character expansion due to Howe and Harish--Chandra, which we will briefly recall now. The character $\Theta_{\pi}$ of $\pi$ is the distribution defined via
\begin{equation}
  \Theta_{\pi}(f) = \mathrm{Tr}(\pi(f)) \text{ with }\pi(f)v = \int_{\mathrm{GL}_n(F)}f(g)\pi(g) v \, d g, \text{ for } f\in C_c^{\infty}(\mathrm{GL}_n(F)).\nonumber
\end{equation}
The local character expansion alluded to above allows us to asymptotically evaluate $\Theta_{\pi}$ using certain distributions associated to nilpotent orbits. Let $\alpha\in\mathcal{P}^0(n)$ be an ordered partition of $n$. Besides the nilpotent orbit $\mathfrak{o}_{\alpha}$, we can associate to $\alpha$ the standard parabolic subgroup $P_{\alpha}$ with Levi component $M_{\alpha}=\mathrm{GL}_{\alpha_1}\times \ldots\times \mathrm{GL}_{\alpha_{r(\alpha)}}$. Let $\eta_{\alpha}=\mathrm{Ind}_{P_{\alpha}(F)}^{\mathrm{GL}_n(F)}(1)$ denote the representation parabolically induced from the trivial representation of $P_{\alpha}(1)$. We are now ready to state the desired character expansion:

\begin{prop}[Howe, DeBacker]\label{how_exp_quant}
  There is a constant $C_0=C_0(n,F) \in \mathbb{Z}_{\geq 1}$ such that the following holds. Let $\pi$ be a supercuspidal representation of $\mathrm{GL}_n(F)$. Then there are constants $c_{\mathfrak{o}}(\pi)\in \C$ such that
  \begin{equation}
    \Theta_{\pi}(f) = \sum_{\alpha \in \mathcal{P}^{0}(n)}
    c_{\mathfrak{o}_{\alpha}}(\pi) \Theta_{\eta_{\alpha}}(f) \text{ for }f\in C_c^{\infty}(K_{\lfloor \rho(\pi)\rfloor +C_0}).\label{loc_char_exp}
  \end{equation}
  The coefficients of this expansion satisfy $c_{\mathfrak{o}}(\pi)\in \Z$ and $c_{\mathfrak{o}_{(1,\ldots,1)}}(\pi)=1$. Even more, if $p>2n$, then we can take $C_0=1$.
\end{prop}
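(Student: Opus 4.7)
The plan is to assemble the proposition from three classical ingredients: the Harish--Chandra local character expansion, Howe's identification (for $\mathrm{GL}_n$) of Fourier transforms of nilpotent orbital integrals with characters of parabolically induced representations, and DeBacker's homogeneity theorems supplying the quantitative range of validity.

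First, I would invoke the Harish--Chandra local character expansion: for any irreducible admissible representation $\pi$, there exist complex coefficients $c_{\mathfrak{o}}(\pi)$, indexed by nilpotent adjoint orbits $\mathfrak{o}$ in $\Mat_n(F)$, such that $\Theta_\pi\circ\exp = \sum_{\mathfrak{o}} c_{\mathfrak{o}}(\pi)\widehat{\mu}_{\mathfrak{o}}$ as distributions on a neighborhood of zero in $\Mat_n(F)$, where $\widehat{\mu}_{\mathfrak{o}}$ is the Fourier transform of the nilpotent orbital integral along $\mathfrak{o}$. For $\mathrm{GL}_n$, Howe's identity then expresses each $\widehat{\mu}_{\mathfrak{o}_{\alpha}}$, with the natural normalization absorbed into the coefficient, as $\Theta_{\eta_{\alpha}}$; this transforms the abstract expansion into the concrete form stated in \eqref{loc_char_exp}, now indexed by ordered partitions $\alpha\in\mathcal{P}^0(n)$.

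Second, for the explicit range of validity, I would apply DeBacker's homogeneity theorem, which ties the radius of validity of the local character expansion to the Moy--Prasad depth $\rho(\pi)$. Translating Moy--Prasad filtration subgroups back to the standard principal congruence subgroups $K_m$ yields a support condition $f\in C_c^\infty(K_{\lfloor \rho(\pi)\rfloor+C_0})$ for some $C_0=C_0(n,F)\geq 1$. In the tame range $p>2n$ the Cayley transform restricts cleanly to Moy--Prasad lattices, and DeBacker's sharp bound then permits $C_0=1$.

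Third, for the arithmetic constraints on the coefficients: the integrality $c_{\mathfrak{o}}(\pi)\in\mathbb{Z}$ for representations of $\mathrm{GL}_n$ can be extracted from Howe's original argument, which expresses the $c_{\mathfrak{o}_{\alpha}}(\pi)$ as integer combinations of dimensions of parahoric-fixed subspaces of the $\eta_{\alpha}$ (and hence of $\pi$) after inverting a unipotent transition matrix. The remaining claim $c_{\mathfrak{o}_{(1,\ldots,1)}}(\pi)=1$ at the regular nilpotent orbit is Rodier's theorem: that coefficient equals $\dim\Hom_{U(F)}(\pi,\boldsymbol{\psi})$ for any nondegenerate character $\boldsymbol{\psi}$ of $U(F)$, and for every supercuspidal of $\mathrm{GL}_n$ this Whittaker space is one-dimensional by Gelfand--Kazhdan multiplicity one together with the genericity of supercuspidals (Bernstein--Zelevinsky).

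The main obstacle is the bookkeeping required to secure the sharp value $C_0=1$ for $p>2n$: it requires carefully matching the normalization conventions in play here (principal depth $\delta(\pi)=\lceil\rho(\pi)\rceil+1$, unramified additive character $\psi_F$ of conductor $\mathfrak{o}$, Cayley versus exponential transport between $\mathrm{Lie}(\mathrm{GL}_n)$ and $\mathrm{GL}_n$) with those of DeBacker, Howe, and Rodier, which are phrased differently and differ by index shifts; see Remark~\ref{remark:cngrk8lcsv}.
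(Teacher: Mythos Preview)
Your outline matches the paper's for $p>2n$: both invoke DeBacker's sharp homogeneity result \cite{De} to obtain $C_0=1$, and your appeal to Rodier for $c_{\mathfrak{o}_{(1,\dotsc,1)}}(\pi)=1$ is a legitimate alternative to the paper's citation of \cite[Theorem~3]{How2}. The gap is in your treatment of the general case. DeBacker's theorem is stated under hypotheses (existence of a well-behaved mock exponential, etc.) that are only verified for $\GL_n$ when $p>2n$; you cannot simply cite it with a worse constant for small residue characteristic. The qualitative Harish--Chandra expansion you invoke first gives, for each $\pi$, a neighborhood of $1$ where the expansion holds, but does not by itself give the uniformity you need, namely that this neighborhood depends on $\pi$ only through $\lfloor\rho(\pi)\rfloor$ plus a constant $C_0(n,F)$ independent of $\pi$.

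The paper closes this gap by going back to Howe \cite{How2} directly. The key input is Howe's pre-homogeneity result \cite[Proposition~1]{How2}, which gives a constant $C=C(n,F)$ such that $\mathrm{res}_{\mathcal{L}_{-j}}\mathcal{J}(X_{-i})\subseteq\mathrm{res}_{\mathcal{L}_{-j}}\mathcal{J}(\mathcal{N})$ whenever $j-i\geq C$; rescaling makes this uniform in $i$. One then needs to know that a suitable Fourier transform $\widehat{\Theta}_\pi$ of the character lies in $\mathcal{J}(X_{-s_0})$ with $s_0=\lfloor\rho(\pi)\rfloor+2$. The paper obtains this support estimate by choosing a $K(s_0)$-invariant unit vector $v\in\pi$ whose matrix coefficient is supported on $E^\times J$, which requires invoking the Bushnell--Kutzko construction of supercuspidals \cite{BK}. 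Your proposal omits both the pre-homogeneity step and this support argument, and without them the uniform $C_0(n,F)$ for small $p$ is not established.
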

The proof of this result with an unspecified value of $C_0$ reduces to a careful inspection of Howe's original argument. When $p>2n$, what we need is a direct consequence of DeBacker's results towards the Hales--Moy--Prasad conjecture. For the convenience of the reader, we will provide some details here.
\begin{proof}	
  Key to the proof is a careful analysis of certain spaces of distributions. In general, given an $\mathrm{Ad}_{\GL_n(F)}$-invariant subset $X$ of $\mathrm{Mat}_n(F)$, we write $\mathcal{J}(X)$ for the set of invariant distributions with support in $X$.  Basic examples of invariant sets are given by $X_j = \mathrm{Ad}_{\mathrm{GL}_n(F)}\mathrm{Mat}_n(\mathfrak{p}^j)$ with $j\in \Z$. In the current setting, a distribution is simply a linear functional on $C_c^{\infty}(\mathrm{Mat}_n(F))$. In particular, given a linear subspace $\mathcal{L}\subseteq C_c^{\infty}(\mathrm{Mat}_n(F))$ and $D\in \mathcal{J}(X)$, we write $\mathrm{res}_{\mathcal{L}}D$ for the restriction of $D$ to $\mathcal{L}$. Of particular interest are the spaces $\mathcal{L}_i=C_c(\mathrm{Mat}_n(F)/\mathrm{Mat}_n(\mathfrak{p}^i))$ and $\mathcal{L}^j=C^{\infty}(\mathrm{Mat}_n(\mathfrak{p}^j))$. For later reference, we record that
  \begin{equation}
    f\in \mathcal{L}^j \text{ if and only if }\widehat{f}\in \mathcal{L}_{-j}.\label{duality}
  \end{equation}
  This is a simple consequence of the definition of the Fourier transform and character orthogonality. 
  
  The first important space of distributions is $\mathcal{J}(\mathcal{N})$, where $\mathcal{N}$ is the invariant set of nilpotent elements in $\mathrm{Mat}_n(F)$. Canonical examples of distributions in this space are obtained by integrating over nilpotent orbits. Indeed, a nilpotent orbit $\mathfrak{o}_{\alpha}$ carries a natural $\mathrm{GL}_n(F)$-invariant Radon measure $\mu_{\mathfrak{o}_{\alpha}}$, and we can view $\mu_{\mathfrak{o}_{\alpha}}$ as distribution in $\mathcal{J}(\mathcal{N})$. It is well known that (see for example \cite[Lemma~3.3]{Ha})
  \begin{equation}
    \mathcal{J}(\mathcal{N}) = \langle \mu_{\mathfrak{o}_{\alpha}}\colon \alpha\in \mathcal{P}^0(n)\rangle_{\C}.\label{nilp}
  \end{equation} 
  
  According to the usual definition of the Fourier transform of distributions, we have
  \begin{equation}
    \mu_{\mathfrak{o}_{\alpha}}(\widehat{f})= \widehat{\mu}_{\mathfrak{o}_{\alpha}}(f). \nonumber
  \end{equation} 
  After normalizing the measures $\mu_{\mathfrak{o}_{\alpha}}$ appropriately, the distributions $\widehat{\mu}_{\mathfrak{o}_{\alpha}}$ can be matched with the characters $\Theta_{\eta_{\alpha}}$.  Let us make this more precise. We abbreviate $e(x) = 1+x$ and view it as a map $\mathrm{Mat}_n(F)\to \mathrm{Mat}_n(F)$. Note that $Y_j=e(X_j)$ is a conjugation invariant neighbourhood of $I_n$ in $\mathrm{GL}_n(F)$. We define $e^{\ast\ast}D(f) = D(f\circ e)$ for $f\in C_c^{\infty}(Y_j)$ and a distribution $D$ on $X_j$.  It is shown in \cite[Lemma~5]{How2}, or in \cite{vD}, that we can choose the normalization such that
  \begin{equation}
    e^{\ast\ast}\widehat{\mu}_{\mathfrak{o}_{\alpha}} = \Theta_{\eta_{\alpha}}\label{eq:orbs}
  \end{equation}
  on $Y_j$ for all $j> 0$.\footnote{A common alternative formulation of this fact is as follows. It is a result due to Harish--Chandra that the distribution $\widehat{\mu}_{\mathfrak{o}_{\alpha}}$ is represented by a locally integrable function, which is locally constant on the regular elements in $\mathrm{Mat}_n(F)$. By abuse of notation, this locally integrable function is also denoted by $\widehat{\mu}_{\mathfrak{o}_{\alpha}}$. In this setting, one can show that the germ of the character $\Theta_{\eta_{\alpha}}$ agrees with $\widehat{\mu}_{\mathfrak{o}_{\alpha}}$ on the set of regular topological nilpotent elements in $\mathrm{Mat}_n(F)$.}

  A key result from \cite{How2} is his Proposition~1, which we can now state as
  \begin{equation}
    \mathrm{res}_{\mathcal{L}_{-j}} \mathcal{J}(X_{-i}) \subseteq  \mathrm{res}_{\mathcal{L}_{-j}}\mathcal{J}(\mathcal{N}) \text{ for }j-i\geq C.\label{pre_hom}
  \end{equation} 
  Here, $C$ is some constant depending only on $F$ and $n$.  More precisely, one gets \eqref{pre_hom} for $i=0$ by applying \cite[Propostion~1]{How2} with $X=X_0$. The constant then depends on $X_0=\mathrm{Ad}_{\mathrm{GL}_n(F)}\mathrm{Mat}_n(\mathfrak{o})$, which we consider as fixed. The case of more general $i\neq 0$ is obtained by a simple re-scaling argument, which is discussed on \cite[p.312]{How2}.
  
  Next we want to apply the pre-homogeneity result \eqref{pre_hom} to a suitably modified version of the character $\Theta_{\pi}$ of the supercuspidal representation $\pi$. We follow the discussion on \cite[p. 319]{How2} and write
  \begin{equation*}
    \langle v,u\rangle \Theta_{\pi}(f) = d(\pi)\int_{{\rm GL}_n(F)/Z}\int_{{\rm GL}_n(F)}f(g)\theta_{u,v}(hgh^{-1})dg\, dh,
  \end{equation*}
  where $u,v\in \pi$, $d(\pi)$ is the formal degree of $\pi$, $f\in C_c^{\infty}(G)$, $Z$ is the centre of ${\rm GL}_n(F)$ and
  \begin{equation}
    \theta_{u,v}(g)=\langle \pi(g)v,u\rangle \nonumber
  \end{equation}
  is the matrix coefficient.  Put $s_0=\lfloor \rho(\pi)\rfloor+2\geq 2$ and $C_0 = C+2$. 
  
  To construct a suitable matrix coefficient $\theta_{v,v}$ we will have to allude to the classification of suppercuspidal representations of $\textrm{GL}_n(F)$ given in \cite[Theorem~8.4.1]{BK} for example.\footnote{We will be as brief as possible on these matters and refer the reader to \cite{BK} for a detailed exposition.} Since $\pi$ is supercuspidal, there is a maximal type $(J,\lambda)$, a finite field extension $E/F$ and an extension $\Lambda$ of $\lambda$ to $E^\times J$ such that
  \begin{equation*}
    \pi = \textrm{c-Ind}_{E^{\times}J}^{\textrm{GL}_n}(\Lambda).
  \end{equation*} 
  Without loss of generality, we can assume that $J\subseteq K$.  By the construction of $\lambda$ from a simple character of a simple stratum (see \cite[Section~3 and~5]{BK}), we see that $\Lambda$ is $K(s_0)$-invariant.  In particular, we can choose a unit vector $v\in \pi$ that is $K(s_0)$-invariant and for which $\theta_{v,v}$ is supported in $E^{\times}J$.

  We denote by $\widetilde{\Theta}_{\pi}$ the distribution on $\Mat_n(F)$ given as follows: for $f\in C_c^{\infty}(\mathrm{Mat}_n(F))$,
  \begin{equation*}
    \widetilde{\Theta}_{\pi}(f) = \Theta_{\pi}([f\cdot\mathbbm{1}_{X_{1}}]\circ e^{-1}).
  \end{equation*}
  By construction, $\widetilde{\Theta}_{\pi}\in \mathcal{J}(X_1)$.  Using suitably normalized measures, we also get
  \begin{equation}
    \widetilde{\Theta}_{\pi}(f) = d(\pi) \int_{{\rm GL_n}(F)/Z}\int_{X_{1}}f(x)[\theta_{v,v}\circ e](\mathrm{Ad}_h x)\, d x\, d h.\nonumber
  \end{equation}
  Let $\widehat{\Theta}_{\pi}\in \mathcal{J}(\mathrm{Mat}_n(F))$ denote the Fourier transform of $\widetilde{\Theta}_{\pi}$, defined using the trace pairing.  Then, by definition, we have
  \begin{equation}
    \widehat{\Theta}_{\pi}(\widehat{f}) = \widetilde{\Theta}_{\pi}(f) =
    d(\pi) \int_{{\rm GL}_n(F)/Z}\int_{\mathrm{Mat}_n(F)}\widehat{f}(x)\widehat{\theta}^{(1)}_{v,v}(\mathrm{Ad}_h x) \, d x \, d h, \nonumber
  \end{equation}
  where $\widehat{\theta}^{(1)}_{v,v}$ is the Fourier transform of $\tilde{\theta}=(\theta_{v,v}\circ e)\cdot \mathbbm{1}_{X_{1}}$.  Note that $\tilde{\theta}(X)=0$ unless $X\in X_1$ and $1+X\in E^{\times}J$. The first containment shows that $\det(1+X)\in \mathcal{O}^{\times}$ and one deduces that $1+X\in K$. With this at hand, one verifies that $\tilde{\theta}(X+Y) = \tilde{\theta}(X)$ for $Y\in \textrm{Mat}_n(\mathfrak{p}^{s_0}).$ The upshot is that the Fourier transform $\widehat{\theta}^{(1)}_{v,v}$ of $\tilde{\theta}$ has support in $\mathrm{Mat}_n(\mathfrak{p}^{-s_0})$. In particular,
  \begin{equation}
    \widehat{\Theta}_{\pi}\in \mathcal{J}(X_{-s_0}).\nonumber
  \end{equation}
  Using \eqref{pre_hom} and \eqref{nilp} allows us to write
  \begin{equation}
    \mathrm{res}_{\mathcal{L}_{-i}}\widehat{\Theta}_{\pi} = \sum_{\alpha \in \mathcal{P}^0(n)}c_{\mathfrak{o}_{\alpha}}(\pi)	\mathrm{res}_{\mathcal{L}_{-i}}\mu_{\mathfrak{o}_\alpha}\nonumber
  \end{equation}
  as long as $i-s_0\geq C$. In particular, if $f\in \mathcal{L}_i$, then one uses \eqref{duality} to obtain
  \begin{equation}
    \widetilde{\Theta}_{\pi}(f) = \widehat{\Theta}_{\pi}(\widehat{f}) =  \sum_{\alpha \in \mathcal{P}^0(n)} c_{\mathfrak{o}_{\alpha}}(\pi)\mu_{\mathfrak{o}_{\alpha}}(\widehat{f}).\nonumber
  \end{equation}
  The expansion as stated in the theorem now follows by using \eqref{eq:orbs}. The statement concerning the coefficients is given in \cite[Theorem~3]{How2}.

  We turn to situation when $p>2n$.  In this case, we can use a quantitative version of the character expansion due to S.\ DeBacker.  To see this, let us write $\mathfrak{g}_{\mathrm{reg}}$ for the set of regular elements in $\mathrm{Mat}_n(F)$.  Similarly, we denote the set of topological nilpotent elements by $\mathfrak{g}_{tn}$.  As in \cite[Section~1.3]{De}, we define the sets $\mathfrak{g}_{r}$ and $\mathfrak{g}_{r+}$.  The precise definition of these is not important to us; we will only use that $\mathfrak{g}_{0+} = \mathfrak{g}_{\mathrm{tn}}$ and $\mathfrak{p}^j \mathfrak{g}_{\mathrm{tn}}\subseteq \mathfrak{g}_{r+}$ if $j\geq r$.  By \cite[Theorem~3.5.2]{De}, we have
  \begin{equation}
    \Theta_{\pi}(1+x) =\sum_{\alpha\in \mathcal{P}^0(n)}a_{\alpha}(\pi)\widehat{\mu}_{\mathfrak{o}_{\alpha}}(x) \text{ for } x\in \mathfrak{g}_{\mathrm{reg}}\cap \mathfrak{g}_{\rho(\pi)+}.\nonumber
  \end{equation}
  Here, we slightly abuse notation and write $\Theta_{\pi}$ for the character of $\pi$ and for the corresponding germ. Note that the statement in \cite{De} holds under the assumption of certain hypotheses, all of which will be true for sufficiently large residual characteristic $p$.  That $p>2n$ suffices for $\mathrm{GL}_n$ is stated in \cite[(4.1)]{Mur2}, see also \cite[Theorem~11.5]{Mur}, which is all we need.  Indeed, after using \eqref{eq:orbs} to replace $\widehat{\mu}_{\mathfrak{o}_{\alpha}}$ with $\Theta_{\eta_{\alpha}}$ and integrating, we find that \eqref{loc_char_exp} holds for functions $f$ supported in $Y_j$ with $j>\rho(\pi)$.  We replace $j>\rho(\pi)$ by $j\geq \lfloor \rho(\pi)\rfloor +1$.  We are done since $K_j\subseteq Y_j$.
\end{proof}

\begin{rem}\label{rem:exp_q_0}
  The Hales--Moy--Prasad conjecture predicts that the local character expansion of Howe and Harish--Chandra always holds on $\mathfrak{g}_{\mathrm{reg}}\cap \mathfrak{g}_{\rho(\pi)+}$. In particular, this would imply that in Proposition~\ref{how_exp_quant}, one can always take $C_0=1$. However, to the best of our knowledge, this is still open for $\mathrm{GL}_n(F)$ in the cases $p\leq 2n$ and $n\geq 4$. Note that for $n=3$, the conjecture was proven without restrictions on $p$ in the PhD thesis of S.\ DeBacker. Some partial results towards the unrestricted Hales--Moy--Prasad conjecture were obtained in the 2022 PhD thesis of Y.\ Chen. 
\end{rem}

The preparations are now in place to treat the case $m > \rho(\pi)+1$
of Lemma~\ref{lm:exist_reg}.  As we saw in the proof of Lemma \ref{lm:equiv}, in this case we need to locate a regular nilpotent element in $\mathcal{B}_{\pi}(m)$.  We obtain the following result:

\begin{lemmy}\label{depth_pos_case}
  There exists $d = d(n,F) \in \mathbb{Z}_{\geq 0}$, depending only upon $F$ and $n$, with the following property.  Let $\pi$ be an irreducible supercuspidal representation of $\GL_n(F)$.  Let $m \geq 2$ be a natural number with $m > \rho(\pi) + 1 + d$.  Then $\mathcal{B}_m(\pi)$ contains a regular (nilpotent) element.  Moreover, if the residual characteristic of $F$ satisfies $p > 2 n$, then the conclusion holds with $d = 0$.
\end{lemmy}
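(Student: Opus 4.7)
The plan is to realize the multiplicity $m_{\pi,m}(\beta)$ as a character evaluation, apply the quantitative local character expansion of Proposition~\ref{how_exp_quant}, and then average over regular nilpotent $\beta$ so that only the regular orbit (with coefficient $1$) survives.

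First, I would test $\Theta_\pi$ against the function
\[
  F_{m,\beta}(g) := \frac{\mathbbm{1}_{K(m-1)}(g)\,\overline{\theta_\beta(g K(m))}}{\mathrm{vol}(K(m-1))},
\]
so that character orthogonality on the abelian quotient $K(m-1)/K(m)$ gives $m_{\pi,m}(\beta) = \Theta_\pi(F_{m,\beta})$. Since $F_{m,\beta}$ is supported in $K(m-1)$, the hypothesis $m > \rho(\pi) + 1 + d$ with $d := C_0 - 1$ (and $d = 0$ in the $p > 2n$ case, where $C_0 = 1$) ensures $m - 1 \geq \lfloor \rho(\pi) \rfloor + C_0$, so Proposition~\ref{how_exp_quant} applies.

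Next, I would transfer the expansion to the Lie algebra via the Cayley map $e(x) = 1 + x$. Combining the relation $e^{**}\widehat{\mu}_{\mathfrak{o}_\alpha} = \Theta_{\eta_\alpha}$ from \eqref{eq:orbs}, the Fourier duality $\widehat{\mu}_{\mathfrak{o}_\alpha}(f) = \mu_{\mathfrak{o}_\alpha}(\widehat{f})$, and a routine computation of the Fourier dual with respect to the trace pairing on $\Mat_n(F)$, one obtains
\[
  m_{\pi,m}(\beta) = c \sum_{\alpha \in \mathcal{P}^0(n)} c_{\mathfrak{o}_\alpha}(\pi)\, \mu_{\mathfrak{o}_\alpha}(C_\beta), \qquad C_\beta := \tilde{\beta}/\varpi^m + \Mat_n(\mathfrak{p}^{-(m-1)}),
\]
where $\tilde{\beta} \in \Mat_n(\mathfrak{o})$ is any lift of $\beta$ and $c > 0$ is an overall volume factor coming from Pontryagin duality.

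Finally, I would sum this identity over all regular nilpotent $\beta \in \Mat_n(\mathfrak{k})$. Interchanging sum and integral, the inner count $\sum_{\beta \text{ reg nilp}} \mathbbm{1}_{C_\beta}(x)$ reduces to the indicator that $x \in \Mat_n(\mathfrak{p}^{-m})$ and that $\varpi^m x \pmod{\mathfrak{p}}$ is regular nilpotent in $\Mat_n(\mathfrak{k})$. By semicontinuity of Jordan type under reduction modulo $\mathfrak{p}$, an element $x \in \mathfrak{o}_\alpha$ can only satisfy this condition when $\alpha = (1,\ldots,1)$, so $\sum_{\beta \text{ reg nilp}} \mu_{\mathfrak{o}_\alpha}(C_\beta) = 0$ for every non-regular $\alpha$. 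The surviving term $\alpha = (1,\ldots,1)$ is strictly positive — taking $\tilde{\beta}$ to be the standard regular nilpotent Jordan block exhibits $\tilde{\beta}/\varpi^m \in \mathfrak{o}_{(1,\ldots,1)}$ and hence contributes positive measure — and it is scaled by the pinned-down coefficient $c_{\mathfrak{o}_{(1,\ldots,1)}}(\pi) = 1$. Consequently $\sum_{\beta \text{ reg nilp}} m_{\pi,m}(\beta) > 0$, forcing some regular nilpotent $\beta$ into $\mathcal{B}_m(\pi)$. I expect the main obstacle to be the clean implementation of the semicontinuity step for the $F$-valued orbital measures (rather than just for the underlying varieties), and the careful bookkeeping of the constant $C_0$ of Proposition~\ref{how_exp_quant}, which is what governs the admissible depth slack $d$ and which becomes $1$ in the $p > 2n$ range by DeBacker's sharpening.
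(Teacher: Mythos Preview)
Your proposal is correct and follows essentially the same strategy as the paper: test the character against the projector onto $\theta_\beta$, apply Proposition~\ref{how_exp_quant} with $d = C_0 - 1$, and show that only the regular nilpotent term survives. The implementation differs in one place. The paper stays on the group side, writing $m_{\pi,m}(Y_{(1,\ldots,1)}) = \sum_\alpha c_{\mathcal{O}_\alpha}(\pi)\, m_{\eta_\alpha,m}(Y_{(1,\ldots,1)})$ and invoking \cite[Lemma~6]{How2} to get $m_{\eta_\alpha,m}(Y_{(1,\ldots,1)}) = \delta_{\alpha=(1,\ldots,1)} C_n(m)$ directly. You instead pass to the Lie algebra, Fourier transform to nilpotent orbital integrals, and argue the vanishing by semicontinuity of Jordan type under reduction mod $\mathfrak{p}$. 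This is the same content (Howe's lemma is proved by exactly this mechanism), and your route is arguably more self-contained, though it requires you to check that the regular nilpotent orbit, being open in the nilpotent cone, actually meets $C_\beta$ in a set of positive measure --- which you do.

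One simplification: the summation over all regular nilpotent $\beta$ is unnecessary. Your semicontinuity argument already shows $C_\beta \cap \mathfrak{o}_\alpha = \emptyset$ for each individual regular nilpotent $\beta$ and each $\alpha \neq (1,\ldots,1)$, so $\mu_{\mathfrak{o}_\alpha}(C_\beta) = 0$ term by term. Picking the single $\beta = Y_{(1,\ldots,1)}$ (as the paper does) suffices and streamlines the write-up.
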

\begin{proof}
  We take $d := C_0 - 1$, with $C_0$ as in the conclusion of Proposition~\ref{how_exp_quant}.  Let $Y_{\reg} \in \Mat_n(\mathcal{O})$ be any lift of the regular nilpotent Jordan block $Y_{(1, \dotsc, 1)} \in \Mat_n(\mathfrak{k})$.  Let $f \in C_c^\infty(G)$ denote the function supported on $K [m-1]$, and given there by $\operatorname{vol}(K(m-1))^{-1} \theta_{Y_{\reg}}^{-1}$.  This function acts in any representation as the $K(m-1)$-equivariant projection onto the $\theta_{Y_{\reg}}$-component of subspace of $K(m)$-fixed vectors.  Our assumptions concering $m$ imply that Proposition~\ref{how_exp_quant} (with $C_0 = d + 1$) applies to $\Theta_\pi(f)$, giving
  \begin{equation*}
    m_{\pi, m}(Y_{(1, \dotsc, 1)}) = \Theta_\pi(f) = \sum_{\alpha \in \mathcal{P}^0(n)} c_{\mathcal{O}_\alpha}(\pi) \Theta_{\eta_\alpha}(f)
    = \sum_{\alpha \in \mathcal{P}^0(n)} c_{\mathcal{O}_\alpha}(\pi) m_{\eta_\alpha, m}(Y_{(1, \dotsc, 1)}).
  \end{equation*}
  For the induced representations $\eta_{\alpha}$, it is easy to compute the multiplicities by hand.  Indeed, following the argument in \cite[Lemma~6]{How2}, we see that
  \begin{equation*}
    m_{\eta_{\alpha},m}(Y_{1,\ldots,1}) = \delta_{\alpha=(1,\ldots,1)}\cdot C_n(m), 	
  \end{equation*}
  where $C_n(m) = m_{\eta_{(1,\ldots,1)},m}(Y_{1,\ldots,1})>0$ depends only on $n,m$.  We conclude that $m_{\pi, m}(Y_{(1, \dotsc, 1)}) > 0$, as required.
\end{proof}

\begin{rem}
  Note that \cite[Lemma~6.4]{CS} directly implies that there is $c'=c'(n)$ such that for sufficiently large $m$, the set $\mathcal{B}_{c'm}(\pi)$ contains a regular nilpotent element whenever $\pi^{K(m)}\neq \{0\}$. Unfortunately, this statement is too weak for us, since we cannot afford to lose powers of $p$ that depend on $m$ (i.e $v_p(q)$).
\end{rem}

As preparation for our treatment of the case $\rho(\pi)+1=m$, we record some simple observations concerning the behavior of regular elements and depth under twisting.
\begin{lemmy}\label{lm:equiv}
  Let $\pi$ be a supercuspidal representation, let $\chi$ be a character of $F^{\times}$ with conductor exponent $a(\chi)$ and let $m \geq 1$ be an integer.  Assume that (at least) one of the following conditions hold:
  \begin{enumerate}
  \item $m\geq a(\chi)-1$.
  \item $m \geq a(\chi)$ and $m - 1 > \rho(\pi)$.
  \end{enumerate}  
  Then $\mathcal{B}_{m}(\pi)$ contains a regular element if and only if $\mathcal{B}_m(\pi \otimes \chi)$ contains a regular element.
\end{lemmy}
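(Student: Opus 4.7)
The proof rests on the observation that $\chi\circ\det$ is a conjugation-invariant character of $\GL_n(F)$, and its restriction to a sufficiently deep congruence subgroup must correspond to a scalar matrix. Twisting by $\chi$ therefore acts on the parameter set $\mathcal{B}_m$ by a scalar shift, and scalar shifts preserve regularity, since adding $cI_n$ to $\beta$ does not alter its centralizer structure. Concretely, using $\det(1+\varpi^{m-1}X) \equiv 1 + \varpi^{m-1}\tr(X) \pmod{\varpi^{2(m-1)}}$ together with the hypothesis $a(\chi) \leq m + 1$, one verifies that $\chi\circ\det|_{K(m-1)}$ descends to a character of $K(m-1)/K(m+1)$, and by $\Ad(K)$-invariance it must be of the form $\widetilde{\theta}_{\tilde{c}\, I_n}$ for a scalar $\tilde{c}\, I_n \in \Mat_n(\mathfrak{o}/\mathfrak{p}^2)$.

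Under condition (2), or more generally when $m \geq a(\chi)$, the character $\chi\circ\det$ is already trivial on $K(m)$, so $\pi^{K(m)} = (\pi\otimes\chi)^{K(m)}$ as vector spaces and the two $K(m-1)/K(m)$-actions differ by the scalar character $\theta_{\beta_\chi}$ with $\beta_\chi = c_\chi I_n$ (which vanishes if $a(\chi) < m$ and is non-zero scalar if $a(\chi)=m$). Decomposing both sides yields $\mathcal{B}_m(\pi\otimes\chi) = \mathcal{B}_m(\pi) + \beta_\chi$, and the equivalence of having a regular element follows from the elementary matrix-theoretic fact that $\beta$ is regular if and only if $\beta + cI_n$ is.

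The remaining case is condition (1) with $m = a(\chi) - 1$: here $(\pi\otimes\chi)^{K(m)} \neq \pi^{K(m)}$, but both sit inside $\pi^{K(m+1)}$ as isotypic components of the $K(m)/K(m+1)$-action, with $\pi^{K(m)}$ corresponding to the trivial character and $(\pi\otimes\chi)^{K(m)}$ to $\theta_{-cI_n}^{(m+1)}$ where $c = \tilde{c} \bmod \mathfrak{p}$. Tracing the action of $K(m-1)/K(m)$ on each via $\pi$ versus $\pi\otimes\chi$ and using that the twist shifts all $K(m-1)/K(m+1)$-characters appearing in $\pi^{K(m+1)}$ by the scalar $\tilde{c}\, I_n$, one verifies that the comparison between $\mathcal{B}_m(\pi)$ and $\mathcal{B}_m(\pi\otimes\chi)$ is still governed by a scalar translation at the level of $\Mat_n(\mathfrak{k})$, after which regularity transfers as before. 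The main obstacle lies in this last case, since the two fixed subspaces are genuinely distinct isotypic components; one must carefully track the identification through the level-$(m+1)$ decomposition to confirm that the net effect on the parameter sets is a scalar shift, at which point the matrix-theoretic invariance of regularity under adding scalars closes the argument.
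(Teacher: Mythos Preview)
Your treatment of the case $m \geq a(\chi)$ is correct and is essentially the paper's argument: $\chi\circ\det$ restricted to $K(m-1)/K(m)$ is $\theta_{b}$ for a scalar matrix $b$, so $\mathcal{B}_m(\pi\otimes\chi)=\mathcal{B}_m(\pi)+b$, and regularity is preserved under scalar translation because the centralizer of $\beta$ equals that of $\beta+b$. In fact your version is slightly cleaner than the paper's, which in case~(2) first invokes \cite[Corollary~4.2]{HM} to observe that every $\beta\in\mathcal{B}_m(\pi)$ is nilpotent before appealing to the scalar-shift fact; as you note, the nilpotency is irrelevant, since scalar shifts preserve regularity unconditionally.

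The problem is your handling of what you call ``the remaining case'' $m=a(\chi)-1$. This case does not actually exist: the statement of condition~(1) contains a sign typo and should read $m-1\geq a(\chi)$ (equivalently $m\geq a(\chi)+1$). The paper's own proof makes this unambiguous: its first sentence asserts ``In either case, we have $m\geq a(\chi)$,'' and its treatment of case~(1) uses that $\chi\circ\det$ is \emph{trivial} on $K(m-1)$, which forces $m-1\geq a(\chi)$. With the corrected condition~(1), both hypotheses give $m\geq a(\chi)$, and your argument already covers everything.

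Your attempt to push through $m=a(\chi)-1$ cannot succeed, because the assertion is false there. For a concrete obstruction: take $\pi$ supercuspidal of depth $\rho(\pi)=0$ (so $\delta(\pi)=1$), $m=2$, and $\chi$ with $a(\chi)=3$. Then $\rho(\pi\otimes\chi)=a(\chi)-1=2$, hence $\delta(\pi\otimes\chi)=3$, so $(\pi\otimes\chi)^{K(2)}=0$ and $\mathcal{B}_2(\pi\otimes\chi)=\emptyset$. On the other hand, $\mathcal{B}_2(\pi)$ is nonempty and (for $p>2n$, by the very results this lemma feeds into) contains a regular element. So the two sets are not related by any scalar shift, and the ``one verifies'' step in your last paragraph cannot be filled in. Drop that case entirely; the lemma is only asserted (and only used, in Lemma~\ref{expl_depth}) under hypotheses guaranteeing $m\geq a(\chi)$.
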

\begin{proof}
  In either case, we have $m \geq a(\chi)$.  We may view $(\chi \circ \det) |_{K(m-1)}$ as a character of $K(m-1)/K(m)$, necessarily of the form $\theta_b$ for some scalar matrix $b \in \mathfrak{k} \hookrightarrow \Mat_n(\mathfrak{k})$.  The map $\beta \mapsto \beta + b$ then defines a bijection $\mathcal{B}_m(\pi) \rightarrow \mathcal{B}_m(\pi \otimes \chi)$.
  
  In the first case where $m \geq a(\chi) - 1$, the restriction of $\chi \circ \det$ to $K(m-1)$ is trivial, so $b = 0$ and $\mathcal{B}_m(\pi) = \mathcal{B}_m(\pi \otimes \chi)$.

  In the remaining case that $m - 1 > \rho(\pi)$, we see from \cite[Corollary~4.2]{HM} that each $\beta \in \mathcal{B}_m(\pi)$ is \emph{not} standard minimal, i.e., that $\beta$ is nilpotent.  The conclusion then follows from the fact that a nilpotent matrix $\beta \in \Mat_n(\mathfrak{k})$ is regular if and only if its sum $\beta + b$ with a scalar is regular.
\end{proof}

\begin{rem}\label{rem:depth}
  The proof of Lemma~\ref{lm:equiv} can be extended to produce the estimate
  \begin{equation}
    \rho(\pi \otimes \chi) \leq \max(a(\chi)-1,\rho(\pi)),\nonumber
  \end{equation}
  with equality for $a(\chi)-1\neq \rho(\pi)$. This is well known and follows for example from \cite[Proposition~2.2]{Co} after recalling that for $\mathrm{GL}_n$ the conductor exponent $a(\pi)$ of $\pi$ relates to the depth by $a(\pi)=n\rho(\pi)+n$.
\end{rem}

We turn finally to the case when $\rho(\pi)+1=m$. Here, \cite[Theorem~4.3]{HM} (see \cite[Theorem~4.1]{Mur} for a formulation closer to ours) implies that for all $\beta\in \mathcal{B}_m(\pi)$, the character $\theta_{\beta}$ of $K(m-1)$ is standard minimal.  The characteristic polynomial of $\beta$ will be denoted by $\mathrm{Ch}_{\beta}(Z)\in \mathfrak{k}[Z]$. Since $\theta_{\beta}$ occurs in the supercuspidal representation $\pi$, we can apply \cite[Proposition~4.6]{Ku}.  That result asserts that $\theta_{\beta}$ is ``non-split'' in the sense of \cite[Definition~4.5]{Ku}, which means that $\mathrm{Ch}_{\beta}(Z)$ is a power $f(Z)^r$ of some irreducible monic $f(Z)\in \mathfrak{k}[Z]$. Note that $n=r\lambda$, where $\lambda=\deg(f)$, and the polynomial $f$ depends only on $\pi$ but not on $\beta\in \mathcal{B}(\pi,m)$.

\begin{lemmy}\label{expl_depth}
  Assume that the residual characteristic $p$ of $F$ satisfies $p>2n$. Further, let $m>1$ and let $\pi$ be a supercuspidal representation with depth $\rho(\pi) = m-1$. Then there exists a regular $\beta \in \mathcal{B}_m(\pi)$.
\end{lemmy}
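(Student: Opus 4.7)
The plan is to use the Bushnell--Kutzko classification of supercuspidal representations to exhibit an explicit regular element of $\mathcal{B}_m(\pi)$.  Since $\pi$ is supercuspidal of depth $m-1$, it is compactly induced from a maximal simple type whose underlying simple stratum has the form $[\mathfrak{A}, m-1, m-2, \alpha]$ with $E := F[\alpha]$ a degree-$n$ field extension of $F$.  Denote by $e = e(E/F)$ and $\lambda_0 = f(E/F)$ the ramification index and residue degree of $E/F$, so $e\lambda_0 = n$.  A comparison with \cite[Proposition~4.6]{Ku} then forces $r = e$ and $\lambda = \lambda_0$, with $f$ equal to the minimal polynomial over $\mathfrak{k}$ of a primitive element of $\mathfrak{k}_E = \mathfrak{k}_\lambda$ (the degree-$\lambda$ extension of $\mathfrak{k}$).

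The regular element is constructed as follows.  Pick $\zeta \in \mathfrak{o}_E^\times$ with $\bar\zeta$ generating $\mathfrak{k}_\lambda/\mathfrak{k}$, and a uniformizer $\varpi_E$ of $E$; set $\tilde\alpha := \zeta + \varpi_E$.  Standard arguments show $F[\tilde\alpha] = E$ (the residue primitive element $\bar\zeta$ forces $F[\tilde\alpha]$ to contain the unramified sub-extension, hence also $\zeta$, and then $\varpi_E = \tilde\alpha - \zeta$).  Via the regular representation $\mathfrak{o}_E \hookrightarrow \Mat_n(\mathfrak{o})$, the reduction $\beta := \overline{\tilde\alpha} \in \Mat_n(\mathfrak{k})$ acts on $\mathfrak{o}_E/\varpi\mathfrak{o}_E \cong \mathfrak{k}_\lambda[T]/(T^e)$ (with $T := \bar\varpi_E$) as multiplication by $\bar\zeta + T$.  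Splitting $f$ over $\mathfrak{k}_\lambda$ as $f(X) = \prod_i (X - \sigma_i \bar\zeta)$ (available since $\mathfrak{k}$ is perfect), one computes $f(\bar\zeta + T) = T \cdot u$ for a unit $u \in \mathfrak{k}_\lambda[T]/(T^e)$, so $f(\beta)^{e-1}$ acts as multiplication by $T^{e-1} u^{e-1} \neq 0$ while $f(\beta)^{e} = 0$.  Hence the minimal polynomial of $\beta$ over $\mathfrak{k}$ equals $f^e = f^r$, which has degree $n = r\lambda$; this matches the characteristic polynomial, so $\beta$ is regular.

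It remains to verify $\beta \in \mathcal{B}_m(\pi)$, i.e., that $\theta_\beta$ appears in $\pi^{K(m)}|_{K(m-1)}$.  Using the flexibility in the choice of stratum element within its equivalence class (cf.\ \cite[\S1.5]{BK}), one can arrange the simple stratum underlying a type of $\pi$ to have element $\varpi^{-(m-1)} \tilde\alpha'$ for $\tilde\alpha' \in \mathfrak{o}_E^\times$ with $\overline{\tilde\alpha'} = \beta$ in $\Mat_n(\mathfrak{k})$.  The associated simple character is then contained in $\pi$; restricting it to $K(m-1)/K(m)$ and translating between conventions yields $\theta_\beta$, so $\beta \in \mathcal{B}_m(\pi)$.

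The main obstacle is this final step: one must reconcile the filtration $\{K(m)\}$ coming from the maximal order $\Mat_n(\mathfrak{o})$ (used throughout this paper) with the $\mathfrak{A}$-adic filtration used to define simple characters in \cite{BK}, the two agreeing only when $\mathfrak{A}$ is maximal (i.e., $e = 1$).  For ramified $\mathfrak{A}$ ($e > 1$), one traces through a concrete matrix-entry calculation, using the explicit form of $\mathfrak{A}$ attached to the lattice chain $\{\varpi_E^i \mathfrak{o}_E\}$, to identify the restriction of the simple character to $K(m-1)$ with $\theta_\beta$ up to the trace pairing.  The hypothesis $p > 2n$ enters the surrounding analysis (in the invocation of \cite{HM, Ku}) rather than in the regularity calculation above.
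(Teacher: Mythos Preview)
Your approach is genuinely different from the paper's.  The paper does not construct an explicit regular $\beta$ from the Bushnell--Kutzko data; instead it argues by cases on $\lambda$: if $\lambda=n$ any $\beta\in\mathcal{B}_m(\pi)$ is automatically regular; if $\lambda=1$ one twists $\pi$ by a character to drop the depth and invokes Lemma~\ref{depth_pos_case}; and for $1<\lambda<n$ one uses the Howe--Moy Hecke algebra isomorphism to transfer the problem to $\GL_r(E)$ with $r<n$, reducing by induction.  Your idea of producing a single explicit $\beta$ via the regular representation of $\mathfrak{o}_E$ is attractive in that it would handle all $\lambda$ uniformly, but as written it has a real gap.

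The gap is precisely the step you flag as ``the main obstacle''.  First, the phrase ``flexibility in the choice of stratum element within its equivalence class'' does not give what you need: two strata $[\mathfrak{A},n_{\mathfrak{A}},n_{\mathfrak{A}}-1,\alpha]$ and $[\mathfrak{A},n_{\mathfrak{A}},n_{\mathfrak{A}}-1,\alpha']$ are equivalent only when $\alpha-\alpha'\in\mathfrak{P}^{1-n_{\mathfrak{A}}}$, so the class of $\alpha$ in $\mathfrak{P}^{-n_{\mathfrak{A}}}/\mathfrak{P}^{1-n_{\mathfrak{A}}}$ is an invariant of $\pi$, not something you may prescribe.  In particular you cannot freely arrange the stratum element to be $\varpi^{-(m-1)}\tilde\alpha'$ with your chosen $\tilde\alpha'$; even the choice of $\zeta$ must match the specific $f$ attached to $\pi$ (you take an arbitrary generator of $\mathfrak{k}_\lambda$, but $f$ is determined by $\pi$).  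Second, when $e>1$ the filtrations $\{K(j)\}$ and $\{1+\mathfrak{P}^j\}$ genuinely differ, and the simple character is defined relative to the latter; identifying its restriction to $K(m-1)$ with $\theta_\beta$ is exactly the content of the lemma, and ``traces through a concrete matrix-entry calculation'' is not a proof.  What is really needed here is the mechanism the paper imports from \cite{Mur} (specifically \cite[Proposition~8.4]{Mur} via the Howe--Moy isomorphism), which matches the multiplicities of the characters $\theta_{\bar s+Y_\alpha}$ in $\pi$ and $\tilde\pi$; without an analogue of that result your construction, while producing a regular matrix with the correct characteristic polynomial $f^r$, does not establish that the associated $\theta_\beta$ actually occurs in $\pi^{K(m)}|_{K(m-1)}$.
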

\begin{proof}
  We first note that if $\lambda=n$, then any $\beta \in \mathcal{B}_m(\pi)$ is automatically regular.  Indeed, in that case, the minimal polynomial equals the characteristic polynomial, which is one characterization of regularity.

  Now assume that $1<\lambda <n$.  In this case, the problem can be reduced to $\mathrm{GL}_r(E)$ for an extension $E/F$ using the Howe--Moy isomorphism of Hecke algebras.  To see this, we will adapt some notation from \cite[Section~3]{HM} (as well as from \cite{Mur}). Choose an arbitrary $\beta \in \mathcal{B}_m(\pi)$, and write $\beta = \overline{s}+\overline{n}_{\beta}$, where $\overline{s}\in Z(\mathfrak{k})$ is the semisimple part.  With $f(Z)^r$ the characteristic polynomial of $\beta$ as above, we have $f(\bar{s}) = 0$.  Since $f$ is irreducible, we see that $\bar{s}$ generates a finite extension $\mathfrak{l}/\mathfrak{k}$ of degree $\lambda$.  By choosing a $\mathfrak{k}$-basis for $\mathfrak{l}$, we may identify $\Mat_r(\mathfrak{l})$ with a subalgebra of $\Mat_n(\mathfrak{k})$.  Th element $\overline{n}_\beta$ lies in that subalgebra, since it commutes with $\overline{s}$.  We choose the basis so that the $\overline{n}_{\beta}$ defines a strictly upper-triangular element of that subalgebra.  Let $E/F$ be the unramified field extension of degree $\lambda$, which comes with a map $\mathfrak{o}_E \rightarrow \mathfrak{k} '$.  Let $s \in \mathfrak{o}_E$ be any preimage of $\bar{s}$ under this map.  Then $s$ generates $E/F$.  We lift our $\mathfrak{k}$-basis of $\mathfrak{l}$ to an $\mathfrak{o}$-basis of $\mathfrak{o}_E$, which gives an $F$-basis of $E$ and hence inclusions $E \subseteq \Mat_\lambda(F)$ and $E \subseteq \Mat_r(E) \subseteq \Mat_n(F)$ for which $\mathfrak{o}_E = E \cap \Mat_\lambda(\mathfrak{o})$.  Our assumption $\lambda > 1$ gives $\bar{s} \neq 0$, from which it follows that $\Mat_r(E) \subseteq \Mat_n(F)$ is simply the centralizer of $s$.  We define the (hereditary) order
  \begin{equation}
    \mathfrak{Q} := \left[
      \begin{matrix}
        \mathrm{Mat}_{\lambda}(\mathfrak{o}) & \dots & \dots & \mathrm{Mat}_{\lambda}(\mathfrak{o}) \\\varpi \mathrm{Mat}_{\lambda}(\mathfrak{o}) & \ddots & & \vdots \\ \vdots & \ddots & \ddots & \vdots\\ \varpi \mathrm{Mat}_{\lambda}(\mathfrak{o}) & \dots & \varpi\mathrm{Mat}_{\lambda}(\mathfrak{o}) & \mathrm{Mat}_{\lambda}(\mathfrak{o}) 
      \end{matrix}
    \right]\subseteq \mathrm{Mat}_r(\mathfrak{o}_E).\nonumber
  \end{equation}
  The group $Q:=Q(0):=\mathfrak{Q}^{\times}$ is a parahoric subgroup of $\mathrm{GL}_n(F)$.  Set $\tilde{\mathfrak{Q}}:=\mathfrak{Q}\cap \mathrm{Mat}_r(E)$ and $\tilde{Q}:=\tilde{Q}(0):=\tilde{\mathfrak{Q}}^{\times}$, so that $\tilde{Q}$ is an Iwahori subgroup of $\mathrm{GL}_r(E)$.  The element
  \begin{equation}
    \tilde{t} := \left(
      \begin{matrix}
        0 & I_{r-1} \\ \varpi & 0
      \end{matrix}
    \right) \in \mathrm{Mat}_{r}(E) \nonumber
  \end{equation}
  normalizes $\mathfrak{Q}$, and satisfies $\tilde{t}^r = \varpi I_{r}$.  For $l>0$, we define the subgroups
  \begin{equation}
    Q(l):=1+\tilde{t}^l\mathfrak{Q} \quad \text{ and }\quad \tilde{Q}(l):=Q(l)\cap \mathrm{GL}_r(E).\nonumber
  \end{equation}
  Set $\tilde{K}(i) := K(i)\cap \mathrm{GL}_r(E)$, and write $\tilde{\theta}_{\beta}$ for the restriction of $\theta_{\beta}$ to $\tilde{K}(m-1)$.  We have the inclusions
  \begin{equation}
    K(m)\subseteq Q(r(m-1))\subseteq K(m-1).\nonumber
  \end{equation}
  The upshot of this construction is that
  \begin{equation}
    \theta_{\beta}\vert_{Q(r(m-1))} = \theta_{\overline{s}}\vert_{Q(r(m-1))} \text{ and }\tilde{\theta}_{\beta}\vert_{\tilde{Q}(r(m-1))} = \tilde{\theta}_{\overline{s}}\vert_{\tilde{Q}(r(m-1))}.\nonumber
  \end{equation}
  We consider the Hecke algebra
  \begin{equation}
    \tilde{\mathcal{H}}:=\mathcal{H}(\mathrm{GL}_r(E)//\tilde{Q}(r(m-1)),\tilde{\theta}_{\overline{s}}).\nonumber
  \end{equation}
  From $\theta_{\overline{s}}\vert_{Q(r(m-1))}$, one can construct a representation $\sigma$ on a certain compact open subgroup $J$, following \cite[p.32]{How3}.
  This leads to the Hecke algebra
  \begin{equation}
    \mathcal{H}:=\mathcal{H}(\mathrm{GL}_n(F)//J,\sigma).\nonumber
  \end{equation} 
  By \cite[Section~3, Theorem~1.1]{How3} there is an isomorphism
  \begin{equation}
    \eta\colon \tilde{\mathcal{H}}\to \mathcal{H},\nonumber
  \end{equation}
  which preserves natural $L^2$-structures. Given a supercuspidal representation $\pi$ of $\mathrm{GL}_n(F)$, we obtain a representation $\tilde{\pi}:=\eta^{\ast}(\pi)$ of $\mathrm{GL}_r(E)$ (not to be confused with the contragredient).  According to \cite[p.33, (c)]{How3}, the representation $\tilde{\pi}$ is also supercuspidal. As discussed in \cite[p.428]{Mur} (which assumes $p > 2 n$), if $\pi$ contains $(Q(r(m-1)),\theta_{\overline{s}})$, then $\tilde{\pi}$ contains $(\tilde{Q}(r(m-1)),\tilde{\theta}_{\overline{s}})$.  Furthermore, $\eta$ matches $Q(i)$ and $\tilde{Q}(i)$-stable subspaces (for $i$ not too small). This allows one to relate the standard minimal characters $\theta_{\beta}$ appearing in $\pi$ (see \cite[Corollary~6.3]{Mur}) with those appearing in $\tilde{\pi}$ (see \cite[Corollary~6.5]{Mur}). 
  An application of \cite[Proposition~8.4]{Mur} allows us to conclude that $\pi$ contains a regular standard minimal character $\theta_{\beta}$ if and only if $\tilde{\pi}$ contains one.  Indeed, because any regular element having semisimple part $\bar{s}$ is conjugate to $\bar{s} + Y_\alpha$, the representation $\pi$ (resp.\ $\tilde{\pi}$) is regular precisely when it contains the character attached to the element $\bar{s} + Y_\alpha$ of $\Mat_n(\mathfrak{k})$ (resp.\ $\Mat_r(\mathfrak{l})$), but Murnaghan's result relates the multiplicities of these characters by a positive scalar.  We have thus reduced the problem from $\mathrm{GL}_n(F)$ to $\mathrm{GL}_r(E)$ with $r<n$.

  It is now clear that as soon as we can treat the case $1=\lambda$ (or $n=r$), the result follows by induction on $n$.  Therefore we turn towards this exceptional situation.  Without loss of generality, we can assume that $\overline{s}=\mathrm{diag}(\overline{a},\ldots,\overline{a})$ with $\overline{a}\in \mathfrak{k}^{\times}$.  We fix any character $\chi\colon F^{\times}\to S^1$ that is trivial on $\varpi$, has conductor $\mathfrak{p}^m$ and satisfies
  \begin{equation}
    \chi(1+\varpi^{m-1}y) = \psi_F\Big(\frac{\overline{a}y}{\varpi}\Big) \text{ for all }y\in \mathfrak{o}.\nonumber
  \end{equation}
  As in the proof of Lemma~\ref{lm:equiv}, we see that $[\chi\circ \det]\vert_{K(m-1)} = \theta_{\overline{s}}$ and $a(\chi) \leq m$.  In particular, we find that the twisted representation $\pi' := \pi \otimes \chi$ contains $\theta_{\overline{n}_{\beta}}$ when restricted to $K(m-1)$.  Observe that by Remark~\ref{rem:depth}, we have $\rho(\pi')\leq \rho(\pi)$.  However, $\theta_{\overline{n}_{\beta}}$ is not standard minimal.  In particular, we must have $\rho(\pi')<\rho(\pi)$.  Indeed, equality would contradict \cite[Corollary~4.2]{HM}.  We are now in the situation where Lemma~\ref{depth_pos_case} applies to $\pi'$.  As a result, we see that $\mathcal{B}_m(\pi')$ contains a regular element.  We conclude the proof by using Lemma~\ref{lm:equiv}, which is possible because $m-1=a(\chi^{-1})-1 >\rho(\pi')$ and $\chi^{-1}\otimes \pi'=\pi$. 
\end{proof}

\subsection{Completion of the proof}\label{sec:cngrz3r57v}
We now complete the proof of Proposition~\ref{proposition:cngrlcwv3h}.  We have already noted that it suffices to treat the case of supercuspidal $\pi$ and $m \geq 2$.
We choose $d$ as in the conclusion of Lemma~\ref{lm:exist_reg}; in particular, $d = 0$ when $p > 2 n$.  We are given $m \geq 2$ with $m \geq \delta(\pi) + d$, and must verify that $\mathcal{S}_{\pi, m}(\boldsymbol{\psi}_m) \geq 1$.  In view of Lemma~\ref{lm:red_to_mult}, our task is to show that $\pi^{K(m)}$ contains some regular representation of $G_m = \GL_n(\mathfrak{o}/\mathfrak{p}^m)$.  This follows from Lemma~\ref{lm:exist_reg}.

\section{Properties of Kloosterman sets and sums}\label{sec:cnfm30d32o}
Here we recall some properties of Kloosterman sums that are important for our application.  We start by defining local and global Kloosterman sets.  We refer to Subsection \ref{sec:nor} for some notation used in the following.  Given $q\in \mathbb{N}$, $w\in W$ and $c\in \mathbb{N}^{n-1}$, we set
\begin{equation}
  X_{q,w}(c) = \{ xc^{\ast}wy\in U(\mathbb{Z})\backslash G_{w}(\mathbb{Q})\cap  \Gamma(q)^{\natural}/U_{w}(\mathbb{Z})\}. \nonumber
\end{equation}
We refer to this as the Kloosterman set of modulus $c$ (for the Weyl element $w$). It will be convenient to also define the corresponding (local) $p$-adic versions
\begin{equation}
  X_{q,w}^{(p)}(c) = \{ xc^{\ast}wy\in U(\mathbb{Z}_p)\backslash G_{w}(\mathbb{Q}_p)\cap K_p(q)^{\natural}/U_{w}(\mathbb{Z}_p)\}.\nonumber
\end{equation}

We have the convenient properties:
\begin{enumerate}
\item For $p\nmid qc_1\cdots c_{n-1}$, we have $X_{q,w}^{(p)}(c) = U(\mathbb{Z}_p)c^{\ast}wU_{w_{\ast}}(\mathbb{Z}_p)$.
\item The diagonal embedding
  \begin{equation}
    \mathfrak{d}\colon X_{q,w}(c) \to \prod_{p\mid qc_1\cdots c_{n-1}} X_{q,w}^{(p)}(c) \nonumber
  \end{equation}
  is a bijection.
\item For $p\nmid q$, we have $K_p(q)^{\natural} = {\rm GL}_n(\mathbb{Z}_p)$.
\item\label{enumerate:cng12l96pw} In general, for
  \begin{equation*}
    (d_1\cdots d_{n-1},p)=1 \text{ and }p\nmid q,
  \end{equation*}
  we have
  \begin{equation}
    \# X_{q,w}^{(p)}(d c) = \# X_{1,w}^{(p)}(c) \nonumber
  \end{equation}
  via the usual bijection $x(d c )^{\ast} w y \mapsto (d^{\ast})^{-1}xd^{\ast}c^{\ast}wy$.
\end{enumerate}
In particular, for $q=1$ we have the nice factorization
\begin{equation}
  X_{1,w}(c) \cong \prod_{p\mid c_1\cdots c_{n-1}}X_{1,w}(c^{(p)}), \nonumber
\end{equation}
where $c^{(p)}=(p^{v_p(c_1)},\ldots, p^{v_p(c_{n-1})})$. Note that the bijection (Chinese Remainder Theorem) can be made explicit in principle.

Even though the Kloosterman sets are defined for all Weyl elements $w$ and all moduli $c$,  we will only be interested in special situations. We call $w$ admissible if it is of the form $w=w_{n_1,\ldots,n_r}$ as in \eqref{admis} for natural numbers satisfying $n_1+\ldots+n_r=n$. Furthermore, we say that a tuple $(w,c)$ is $(\theta_{M},\theta_N^v)$-relevant for a pair of characters $\theta_M, \theta_N$ and $v\in V$ as in \eqref{char}, if $w$ is admissible and if $c$ satisfies
\begin{equation}
  M_{n-i}\frac{c_{n-i+1}c_{n-i-1}}{c_{n-i}^2} = \frac{v_{w(i)+1}}{v_{w(i)}}N_{n-w(i)},\label{compat_coord}
\end{equation}
for all $1\leq i \leq n-1$ with $w(i)+1=w(i+1)$. This condition is precisely \cite[(4.3)]{AB} and is actually equivalent to the compatibility assumption \eqref{compat} below.

The structure of the lattice $\Gamma(q)^{\natural}$ and the Weyl element $w$ imposes some divisibility constraints on $c$ in order for $X_{w,q}(c)\neq \emptyset$. The following result collects the conditions relevant for our purposes:
\begin{lemmy}\label{lm:divisibility}
  Let $n\geq 4$ and let $w$ be admissible.
  \begin{enumerate}
  \item If $w\in W\setminus\{ 1,w_{\ast}\}$, then $X_{q,w}(c) = \emptyset$ unless there is $i\in \{1,\ldots,n-1\}$ such that $q^{n+2}\mid c_i$.
  \item If $X_{q,w_{\ast}}(c)\neq \emptyset$, then $q^n\mid c_i$ for all $i=1,\ldots,n-1$.
  \end{enumerate}
\end{lemmy}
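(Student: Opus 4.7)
The plan is to analyze the Bruhat decomposition $\gamma = u c^{\ast} w v$ for $\gamma \in \Gamma(q)^{\natural} \cap G_w(\mathbb{Q})$, and derive divisibility by powers of $q$ of the $c_i$'s from $\gamma' := D_q \gamma D_q^{-1} \in I_n + q \, \mathrm{Mat}_n(\mathbb{Z})$.  The main tool is the following: for any equal-sized row and column sets $R, C \subseteq \{1, \ldots, n\}$, the minor $M_{R, C}(\gamma')$ equals $q^{\sigma(R, C)} M_{R, C}(\gamma)$ with $\sigma(R, C) = \sum_{i \in R}(n - i) + \sum_{j \in C}(j - n)$; moreover, $\gamma' \equiv I_n \pmod q$ forces $M_{R, C}(\gamma') \in q^{|R \setminus C|}\mathbb{Z}$ when $R \neq C$.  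Every choice of $R, C$ yielding a minor of $\gamma$ invariant under the Bruhat parameters $u \in U$ and $v \in U_w$ (and hence a rational function of the $c_i$'s alone) therefore produces a divisibility relation.

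For part (2), take $w_{\ast} = w_{1, n-2, 1}$ and consider, for each $m \in \{1, \ldots, n-1\}$, the minor with $R = \{n-m+1, \ldots, n\}$ and $C = \{1, n-m+1, \ldots, n-1\}$.  Invariance under left-$U$ is automatic since $R$ is a final segment; invariance under right-$U_{w_{\ast}}$ holds because its off-diagonal support (first row, last column) interacts with $C$ only through column $1 \in C$, while column $n$ is excluded.  Expanding the submatrix of $c^{\ast} w_{\ast}$ along column $1$ picks up $c_1$ from row $n$, and the remaining diagonal cofactor telescopes via the middle-block pivots $d_i = c_{n-i+1}/c_{n-i}$ to $c_m/c_1$; the minor equals $\pm c_m$.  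Since $\sigma(R, C) = 1 - n$ and $|R \setminus C| = 1$, we obtain $q^n \mid c_m$ for every $m$.

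For part (1), for admissible $w = w_{d_1, \ldots, d_r}$ with partial sums $s_k = d_r + \ldots + d_{r-k+1}$, the bottom-left $s_k \times s_k$ leading minor (rows $\{n-s_k+1, \ldots, n\}$, columns $\{1, \ldots, s_k\}$) is invariant and equals $\pm c_{s_k}$, giving $q^{f(s_k)} \mid c_{s_k}$ with
\begin{equation*}
f(s) := s(n-s) + \min(s, n-s).
\end{equation*}
One checks directly that $f(s) \geq n+2$ for $s \in \{2, \ldots, n-2\}$ and $n \geq 4$, so we are done whenever some $s_k$ lies in $\{2, \ldots, n-2\}$.  Otherwise every $s_k \in \{1, n-1, n\}$, which forces $w \in \{I, w_{1, n-1}, w_{n-1, 1}, w_{\ast}\}$; excluding $I$ and $w_{\ast}$, only $w_{1, n-1}$ and $w_{n-1, 1}$ remain.

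For $w_{1, n-1}$, the sub-diagonal pivots $\gamma_{j+1, j} = d_{j+1}$ are $u, v$-invariant (a direct check, since $U_{w_{1, n-1}}$ acts only on column $n$), so $\gamma'_{j+1, j} = q^{-1} d_{j+1} \in q\mathbb{Z}$ gives $q^2 \mid c_{n-j}/c_{n-j-1}$ for each $j$; iterating yields $q^{2(n-1)} \mid c_{n-1}$, which exceeds $q^{n+2}$ for $n \geq 4$.  The hardest case is $w_{n-1, 1}$.  Here the family of minors with rows $\{1, \ldots, k, n\}$ and columns $\{1, \ldots, k+1\}$ is invariant -- the in-between rows $\{k+1, \ldots, n-1\}$ happen to vanish identically in these columns because of the very restricted shape of $U_{w_{n-1, 1}}$ -- and evaluates to $\pm c_1/c_{n-k}$, yielding $q^{n-k} c_{n-k} \mid c_1$ for each $k \in \{1, \ldots, n-2\}$.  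Combined with $q^i \mid c_{n-i+1}$ from the dual leading-minor family, this first gives $q^{n+1} \mid c_1$.  To upgrade to $q^{n+2} \mid c_1$, one exploits the auxiliary minor with rows $\{1, \ldots, n\} \setminus \{n-2\}$ and columns $\{1, \ldots, n\} \setminus \{n-1\}$: its expansion along column $1$ evaluates to $\pm c_2/c_3$ with scaling $q^{-1}$ and $|R \setminus C| = 1$, hence $q^2 c_3 \mid c_2$; combined with $q^{n-2} \mid c_3$ this gives $q^n \mid c_2$, and finally $q^2 c_2 \mid c_1$ (from the main family with $k = n-2$) delivers $q^{n+2} \mid c_1$.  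The principal obstacle is identifying this precise auxiliary minor in the $w_{n-1, 1}$ case, where no single canonical invariant captures the full divisibility of $c_1$.
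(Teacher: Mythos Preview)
Your approach via invariant minors is genuinely different from the paper's, which simply cites \cite[Lemma~4.2]{AB} for all cases except $w_1=w_{1,n-1}$ and then disposes of $w_1$ by the involution $\iota(\gamma)=w_l\gamma^{-\top}w_l^{-1}$ (which maps the $w_1$-cell to the $w_1'$-cell with $c$ replaced by $c_{\mathrm{op}}$). Your argument is self-contained and the minor machinery is set up correctly; part (2), the generic part of (1) (when some $s_k\in\{2,\dots,n-2\}$), and the case $w_{1,n-1}$ all go through. A small inaccuracy: $U_{w_{1,n-1}}$ has its off-diagonal support in the \emph{first row}, not only in column~$n$; nevertheless $\gamma_{j+1,j}$ is still invariant because column~$1$ of $c^{\ast}w$ vanishes in rows $\geq 3$, so your conclusion there stands.

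There is, however, a real gap in your treatment of $w_{n-1,1}$: you invoke a ``dual leading-minor family'' yielding $q^i\mid c_{n-i+1}$, but no such family is exhibited, and the bottom-left leading minors of $c^{\ast}w_{n-1,1}$ vanish for size $\geq 2$, so it is unclear what you mean. Without this input, neither the intermediate bound $q^{n+1}\mid c_1$ nor the claim $q^{n-2}\mid c_3$ is justified. The fix is simple and already implicit in your auxiliary minor: the $(n-1)\times(n-1)$ minor $M_{\widehat{l+1},\widehat{l}}(\gamma)$ equals $\pm(\gamma^{-1})_{l,l+1}$, and a short computation (using that $v^{-1}\in U_{w_{n-1,1}}$ affects only row $n$ from the left and $u^{-1}\in U$ only mixes columns upward) shows $(\gamma^{-1})_{l+1,l}=c_{n-l}/c_{n-l+1}$ is invariant for every $l$. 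Your scaling argument then gives $q^2c_{l+1}\mid c_l$ for all $l$, whence $q^{2(n-1)}\mid c_1$, which suffices for $n\geq 4$. (Equivalently, you could apply the paper's involution $\iota$ to reduce $w_{n-1,1}$ to your already-handled $w_{1,n-1}$.) With this replacement, your elaborate detour through the single auxiliary minor and the unspecified dual family becomes unnecessary.
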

\begin{proof}
  Almost everything we need follows from \cite[Lemma~4.2]{AB}. The only issue arises for the Weyl element $w_1$. Indeed, according to the remark after \cite[Lemma~4.2]{AB}, we only have $X_{q,w_1}(c)=\emptyset$ unless
  \begin{equation*}
    (n-1)v_p(\gamma)\geq (n+1)v_p(q),
  \end{equation*}
  where $c=(m\gamma^{n-1},\gamma^{n-2},\ldots,\gamma^2,\gamma)$ for $\gamma\in \mathbb{N}$.  This is slightly weaker than what we are claiming, but this can be fixed using the following trick.

  Suppose we have $c=(c_1,\ldots,c_{n-1})$ such that $X_{q,w_1}(c)\neq \emptyset$. Thus, there are $x\in U(\Z)\backslash U(\Q)$ and $y\in U_{w_1}(\Q)/U_{w_1}(\Z)$ such that
  \begin{equation}
    xc^{\ast}w_1y \in \Gamma(q)^{\natural}.\nonumber
  \end{equation}
  We observe that the map $\gamma \mapsto w_lg^{-\top}w_l^{-1}$ preserves $\Gamma(q)^{\natural}$. Further, we compute that $w_lw_1^{\top}w_l^{-1}=w_1'$ and $w_l(c^{\ast})^{-\top}w_l^{-1} = c_{\mathrm{op}}^{\ast}$ with $c_{\mathrm{op}}=(c_{n-1},\ldots,c_1)$. In particular we find that, for $\tilde{x}=w_lx^{-\top}w_l^{-1}$, $\tilde{y}= w_ly^{-\top}w_{l}^{-1}$ we have 
  \begin{equation}
    \tilde{x}c_{\mathrm{op}}^{\ast}w_1'\tilde{y}\in \Gamma(q)^{\natural}.\nonumber
  \end{equation} 
  Since $\tilde{x}\in U(\Z)\backslash U(\Q)$ and $\tilde{y}\in U_{w_{1}'}(\Q)/U_{w_1'}(\Z)$ we find that
  \begin{equation}
    X_{q,w_1'}(c_{\mathrm{op}})\neq \emptyset.\nonumber
  \end{equation}
  Using the result of the Lemma for $w_1'$ we find that at least one entry of $c_{\mathrm{op}}$ and thus at least one entry of $c$ must be divisible by $q^{n+2}$. This completes the proof also for $w_1$.
\end{proof}

Understanding the size of the Kloosterman sets is (in general) a more complicated problem. We have the trivial bound
\begin{equation}
  \# X_{q,w}^{(p)}(c) \ll  \mathcal{N}_{p^{v_p(q)}}\vert c_1\cdots c_{n-1}\vert_p^{-1}\label{eq:triv_gen_ks}
\end{equation}
with $\mathcal{N}_q$ as in \eqref{constants}. For $(q,p)=1$, this is due to Dabrowski and Reeder \cite{DR}. 
An indication of how this bound can be proved is given in Remark~\ref{proof_triv} below.

For our application, we need something stronger than \eqref{eq:triv_gen_ks}. Producing such general bounds leads to quite complicated counting problems. 
The general expectation is that the bound
\begin{equation}
  \# X_{q,w}^{(p)}(c) \ll  \frac{\mathcal{N}_{p^{v_p(q)}}}{p^{v_p(q)(n-1) + o(1)}}\cdot \vert c_1\cdots c_{n-1}\vert_p^{-1} \label{general_expectation}
\end{equation}
holds and is essentially sharp. Here
 $$\mathcal{N}_{p^{v_p(q)}}  = [K_p(q)^{\natural}\cap U(\mathbb{Q}_p) :  U(\mathbb{Z}_p)] $$
 and 
$$p^{v_p(q)(n-1)} = [T_0(\mathbb{Z}_p) : K_p(q)^{\natural}\cap T_0(\mathbb{Z}_p)].$$

We will first look at the case $n=3$, where the direct counting is still doable.

\begin{lemmy}\label{lm:gl3_set_est_vor}
  We have the following results:
  \begin{enumerate}
  \item Let $M=N=(m,1)$.  Then $(w_1,c)$ is $(\theta_M,\theta_N^v)$-relevant if
    \begin{equation}
      c=(\pm m\gamma^2,\gamma) \text{ (where the sign $\pm$ depends on $v$).}	\nonumber	
    \end{equation}	
    Similarly, if $(w_1',c)$ is $(\theta_M,\theta_N^v)$-relevant, then
    \begin{equation}
      c=\Big(\gamma,\pm \frac{1}{m}\gamma^2\Big).	\nonumber	
    \end{equation}

  \item For $w_1$ and $w_1'$ the size of the local Kloosterman sets depends only on the valuation of the moduli.  More precisely, we have
    \begin{equation*}
      \# X_{q,w_1}^{(p)}(c) = \# X_{q,w_1}^{(p)}((p^{v_p(c_1)},p^{v_p(c_2)}))\quad \text{and} \quad \# X_{q,w_1'}^{(p)}(c) = \# X_{q,w_1'}^{(p)}((p^{v_p(c_1)},p^{v_p(c_2)})).
    \end{equation*}
  \item For $p\mid q$ and $c=(c_1,c_2)$ we have
    \begin{equation}
      \# X_{q,w_1'}^{(p)}(c) \ll \delta_{\substack{p^{2v_p(q)}\mid c_1,\\ p^{4v_p(q)}\mid c_2}}\cdot \frac{\mathcal{N}_{p^{v_p(q)}}}{p^{2v_p(q)}}\cdot \vert c_1c_2\vert_p^{-1}\nonumber
    \end{equation}
    and
    \begin{equation}
      \# X_{q,w_1}^{(p)}(c) \ll \delta_{\substack{p^{2v_p(q)}\mid c_2,\\ p^{4v_p(q)}\mid c_1}}\cdot \frac{\mathcal{N}_{p^{v_p(q)}}}{p^{2v_p(q)}}\cdot \vert c_1c_2\vert_p^{-1}\nonumber
    \end{equation}
  \end{enumerate}
\end{lemmy}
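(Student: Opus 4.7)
For part (1), the compatibility relation \eqref{compat_coord} specializes drastically when $n=3$ and $M = N = (m,1)$. For $w_1$, which corresponds (in the paper's convention) to the cyclic permutation $1\to 2\to 3\to 1$, only $i=1$ satisfies $w_1(i)+1 = w_1(i+1)$, and \eqref{compat_coord} collapses to $c_1/c_2^2 = (v_3/v_2)\,m$. Setting $\gamma := c_2$ and recalling $v_i\in\{\pm 1\}$ produces $c = (\pm m\gamma^2, \gamma)$. For $w_1'$, which is the inverse cycle $1\to 3\to 2\to 1$, only $i=2$ contributes, and \eqref{compat_coord} reads $m c_2/c_1^2 = v_2/v_1$; with $\gamma := c_1$ this yields $c = (\gamma, \pm \gamma^2/m)$.

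For part (2), the decisive observation is that any $t_0 \in T(\Zp)$ normalizes each of $K_p(q)^{\natural}$, $U(\Zp)$, and $U_w(\Zp)$, so the conjugation $g \mapsto t_0 g t_0^{-1}$ descends to a map on Kloosterman sets. The Bruhat decomposition transforms as $u_1 c^{\ast} w u_2 \mapsto u_1' (c')^{\ast} w u_2'$ with $(c')^{\ast} = t_0 \cdot (wt_0^{-1}w^{-1}) \cdot c^{\ast}$. For $w \in \{w_1, w_1'\}$ and $t_0 = \diag(a,b,c)$ a direct computation shows that $t_0 \cdot wt_0^{-1}w^{-1}$ sweeps out all of $T_0(\Zp)$ as $(a,b,c)$ ranges over $(\Zpx)^3$; this exactly matches the ambiguity in $c^{\ast}$ arising from multiplying each $c_i$ by a unit. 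So for any $c, c'$ with coordinate-wise equal $p$-adic valuations, a suitable choice of $t_0$ yields a bijection $X_{q,w}^{(p)}(c) \to X_{q,w}^{(p)}(c')$.

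For part (3), we treat $w_1'$ directly; the case of $w_1$ then follows from the anti-involution $g \mapsto w_l g^{-\top} w_l^{-1}$ exploited in Lemma~\ref{lm:divisibility}, which swaps $w_1 \leftrightarrow w_1'$ and $c \leftrightarrow c_{\mathrm{op}}$. By part (2), we may take $c = (p^a, p^b)$. Parametrize $x \in U(\Qp)$ by $(x_1, x_2, x_3)$ and $y \in U_{w_1'}(\Qp)$ by $(y_1, y_2)$; the element $g := xc^{\ast} w_1' y$ is then an explicit $3 \times 3$ matrix with monomial entries. Membership $g \in K_p(q)^{\natural} = D_q^{-1}K_p(q)D_q$ translates to nine valuation inequalities on the entries of $D_q g D_q^{-1} - I$. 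The $(3,2)$- and $(2,1)$-entries equal $p^a$ and $p^{b-a}$, so they force $a \geq 2m$ and $b - a \geq 2m$, producing the divisibility indicator $\delta_{p^{2m}\mid c_1,\ p^{4m}\mid c_2}$. Under these constraints, the three diagonal inequalities fix $x_1, x_2, y_2$ to specific cosets of $\Zpx$, the $(2,3)$-entry pins $y_1$ down to a coset of $p^{m-b}\Zp$, the $(1,2)$-entry forces $x_3 \in p^{-a}\Zp$, and the $(1,3)$-entry imposes a congruence coupling $(x_1, y_1, x_3, y_2)$. Quotienting by the $U(\Zp)$- and $U_{w_1'}(\Zp)$-equivalences and multiplying the resulting multiplicities yields the claimed upper bound.

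The main obstacle is the combinatorial bookkeeping in the final count. Two delicate points deserve highlighting. First, the left $U(\Zp)$-action on the $x$-coordinates is non-diagonal: $x_3$ is shifted by $\Zp + \Zp \cdot x_2$, and since $x_2$ has valuation $-a$ this subgroup equals $p^{-a}\Zp$, exactly cancelling the $(1,2)$-constraint on $x_3$. Second, once $x_3$ has been absorbed in this way, the $(1,3)$-entry becomes a congruence that reduces the $p^{b-m}$ options for $y_1$ coming from the $(2,3)$-entry down to exactly $p^m$ compatible ones, which is precisely $\mathcal{N}_{p^m}$ in the $n=3$ setting. My plan is to impose the conditions on the variables in the sequential order $y_2$, $x_2$, $x_1$, $y_1$, $x_3$, which makes the successive multiplicities transparent and produces an actual count of order $p^{a+b-2m}$, comfortably within the claimed upper bound $\mathcal{N}_{p^m}\cdot p^{-2m}\cdot |c_1c_2|_p^{-1}$.
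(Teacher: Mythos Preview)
Parts (1) and (2) are correct and follow the same route as the paper: direct specialization of \eqref{compat_coord}, and conjugation by a diagonal unit to strip the unit parts of $c_1,c_2$. The reduction of $w_1$ to $w_1'$ via the involution $g\mapsto w_l g^{-\top}w_l^{-1}$ in part (3) is also exactly what the paper does.

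The counting argument in part (3), however, has a genuine gap in the ``absorption of $x_3$'' step, and the resulting bound $p^{a+b-2m}$ is in fact \emph{false}. Your claim is that the left $U(\Zp)$-action shifts $x_3$ by $\Zp+x_2\Zp=p^{-a}\Zp$, so that after the $(1,2)$-constraint $x_3\in p^{-a}\Zp$ there is a single option for $x_3$. The first sentence is literally true, but it does \emph{not} follow that $U(\Zp)\backslash U(\Qp)$ is parametrized by $(\bar x_1,\bar x_2,\tilde x_3)$ with $\tilde x_3\in\Qp/p^{-a}\Zp$: the action element that shifts $x_3$ by $a' x_2$ simultaneously shifts $x_1$ by $a'$. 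Once you have fixed the coset $\bar x_1\in\Qp/\Zp$ (as you must), only $a'=0$ is allowed, and the residual freedom in $x_3$ is just $\Zp$. Equivalently, the map $[u]\mapsto(\bar x_1,\bar x_2,\tilde x_3)$ is $p^a$-to-$1$, not a bijection. A second slip compounds this: the $(2,3)$-entry forces $\alpha$ into a coset of $p^{-(b-a)}\Zp$ (giving $p^{b-a}$ options), not of $p^{m-b}\Zp$ as you write.

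To see that your final count is actually wrong and not merely unproven, take $m=v_p(q)=1$ and $a=b-a=2$, so $c=(p^2,p^4)$. In the paper's coordinates the $(1,3)$-condition becomes a single congruence modulo $p^2$ in five variables $(x',y',z',\alpha',\beta')\in(\Zp/p)\times(\Zp/p^2)\times(\Zp/p)\times(\Zp/p^2)\times(\Zp/p)$, whose solution count is exactly $p^5=p^{a+b-m}$; your claimed $p^{a+b-2m}=p^4$ is too small. The paper's approach avoids this trap by simply counting $(x,y,z,\alpha,\beta)\in(\Qp/\Zp)^5$ directly (a valid transversal for the double coset space), getting a trivial count of $p^{3t+2r-3m}$ and then saving $p^{r+t-2m}$ from the $(1,3)$-congruence; you should do the same.
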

\begin{proof}
  The statement (1) for $w_1$ (resp.\ $w_1'$) follows directly from \eqref{compat_coord} with $i=1$ (resp.\ $i=2$).

  Turning to (2), we begin with the case of $w_1'$. We may reduce to the case that $c$ is of the form $(p^t,p^{r+t})$.
  Indeed, suppose $c=(ap^{\alpha},bp^{\beta})$ with $a,b\in \mathbb{Z}_p^{\times}$. Let $xc^{\ast}w_1'y\in K_p(q)^{\natural}$, so that $x$ and $y$ contribute to $X_{q,w_1'}^{(p)}(c)$. For $t\in T(\Z_p)$ put $t^{w_1'} = w_1't(w_1')^{-1}$ . We have
  \begin{equation}
    \tilde{x}t^{-1}c^{\ast}t^{w_1'}w_1'\tilde{y} \in K_p(q)^{\natural},\nonumber
	\end{equation}
	where $\tilde{x}=t^{-1}xt$ and $\tilde{y}=t^{-1}yt$. For $t=\diag(t_1,t_2,t_3)$ one easily computes that
	\begin{equation}
		t^{-1}c^{\ast}t^{w_1'} = \left(
		\begin{matrix}
			\frac{t_3}{t_1b}p^{-\beta} & 0&0\\0& \frac{t_1b}{t_2a}p^{\beta-\alpha} &0\\0&0&\frac{t_2a}{t_3}p^{\alpha} 
		\end{matrix}
		\right). \nonumber
	\end{equation}
	Thus by choosing $t_1=b^{-1}$, $t_2= a^{-1}$ and $t_3=1$ we find that conjugation with $t$ defines a bijection between $X_{q,w_1'}^{(p)}(c)$ and $X_{q,w_1'}^{(p)}((p^{\alpha},p^{\beta}))$.
	
	Having made the reduction to $c=(p^t,p^{r+t})$ we count elements in the Kloosterman set directly. Recall that we must count the $x,y,z,\alpha,\beta\in \mathbb{Q}_p/\mathbb{Z}_p$ for which the matrix
  \begin{equation*}
    \left(
      \begin{matrix}
        1 & x & y \\ 0& 1 & z \\ 0&0&1
      \end{matrix}
    \right)c^{\ast}w_1'\left(
      \begin{matrix}
        1 & 0 & \alpha \\ 0&1&\beta \\ 0&0&1
      \end{matrix}
    \right) = \left( 
      \begin{matrix}
        p^rx & p^ty & p^{-t-r}+p^rx\alpha+p^ty\beta \\ p^r & p^tz & p^r\alpha+p^tz\beta \\ 0& p^t& p^t\beta 
      \end{matrix}
    \right)
  \end{equation*}
  lies in
  \begin{equation*}
    K_p(q)^\natural = \GL_3(\mathbb{Q}_p) \cap \left( 1 +
      \begin{pmatrix}
        q \mathbb{Z}_p        & \mathbb{Z}_p & q^{-1} \mathbb{Z}_p \\
        q^2 \mathbb{Z}_p                              & q \mathbb{Z}_p &  \mathbb{Z}_p \\
        q^3 \mathbb{Z}_p                              & q^2 \mathbb{Z}_p &  q \mathbb{Z}_p \\
      \end{pmatrix} \right).
  \end{equation*}
  From the below-diagonal entries, we see that we can assume $r,t\geq 2v_p(q)$.  From the diagonal, we obtain
  \begin{equation}
    p^rx,p^tz,p^t\beta\in 1+q\mathbb{Z}_p.\nonumber
  \end{equation}
  We set $x=p^{-r}(1+qx')$ for $x'\in \mathbb{Z}_p/p^{r-v_p(q)}\mathbb{Z}_p$. Similarly define $z=p^{-t}(1+qz')$ and $\beta = p^{-t}(1+q\beta')$ for $z',\beta'\in \mathbb{Z}_p/p^{t-v_p(q)}\mathbb{Z}_p$. The middle entry of the top row suggests to put $y=p^{-t}y'$ with $y'\in \mathbb{Z}_p/p^t\mathbb{Z}_p$. The last entry of the middle row allows us to define $\alpha$ as
  \begin{equation}
    \alpha=p^{-r}\alpha' - p^{-r-t}(1+qz')(1+q\beta') \text{ for }\alpha'\in \mathbb{Z}_p/p^r\mathbb{Z}_p.\nonumber
  \end{equation}
  Estimating trivially at this point shows that there are at most $p^{3t+2r-3v_p(q)}$ possibilities for
  \begin{displaymath}
    \begin{split} 
      &(x',y',z',\alpha',\beta')\\
      & \in (\mathbb{Z}_p/p^{r-v_p(q)}\mathbb{Z}_p) \times ( \mathbb{Z}_p/p^t\mathbb{Z}_p) \times (\mathbb{Z}_p/p^{t-v_p(q)}\mathbb{Z}_p) \times  (\mathbb{Z}_p/p^r\mathbb{Z}_p)\times (\mathbb{Z}_p/p^{t-v_p(q)}\mathbb{Z}_p) .
    \end{split}
  \end{displaymath}
  Substituting all previous definitions into the upper right corner, we obtain
  \begin{equation}
    q p^{-r-t}(x'+z'+\beta'+q(x'z'+x'\beta'+z'\beta')+q^2x'z'\beta')-p^{-r}\alpha'(1+qx')-p^{-t}y'(1+q\beta')\in  \frac{1}{q}\mathbb{Z}_p.\nonumber
  \end{equation}
  or equivalently
  \begin{equation}
    (x'+z'+\beta'+q(x'z'+x'\beta'+z'\beta')+q^2x'z'\beta')-\frac{p^t}{q} \alpha'(1+qx')-\frac{p^r}{q}y'(1+q\beta')\in  \frac{p^{r+t}}{q^2}\mathbb{Z}_p.\nonumber
  \end{equation}
  It is not obvious from this condition that this is well-defined, but this is clear from the derivation. We can hence enlarge the range of $x'$ (say) to $\mathbb{Z}_p/p^{r + t-2v_p(q)}\mathbb{Z}_p$ at the cost of dividing our count by $p^{t - v_p(q)}$. Then the previous condition determines $x'$, and so the final upper bound for the possibilities of $(x,y,z,\alpha,\beta)$ is $p^{2t+r-v_p(q)}$. The result follows after recalling that $\mathcal{N}_{p^{v_p(q)}}=p^{v_p(q)}$ in this case.

  Finally, we do not have to count the elements in $X_{q,w_1}^{(p)}(c)$ directly.  Instead, we use the same trick as in the proof of Lemma~\ref{lm:divisibility} and define the involution $\iota(g) =w_lg^{-\top}w_l^{-1}$. Note that $\iota(K_p(q)^{\natural}) = K_p(q)^{\natural}$. Note that, if $y\in U_{w_1}(\mathbb{Q}_p)/U_{w_1}(\mathbb{Z}_p)$, then $\iota(y)\in U_{w_1'}(\mathbb{Q}_p)/U_{w_1'}(\mathbb{Z}_p)$. Furthermore $\iota(w_1) = w_1'$ and $\iota(c^{\ast}) = c_{\mathrm{op}}^{\ast}$. The result of these observations is that $\iota$ induces a bijection between $X_{q,w_1}^{(p)}(c)$ and $X_{q,w_1'}^{(p)}(c_{\mathrm{op}})$. Since we have already counted the latter set above, we are done.
\end{proof}

The long Weyl element $w_l$, which for $n=3$ agrees with $w_{\ast}$, features a slightly different behaviour.

\begin{lemmy}\label{deg3_prime_long}
  The following statements hold.
  \begin{enumerate}
  \item We have $\# X_{q,w_{\ast}}^{(p)}(c) = \# X_{q,w_{\ast}}^{(p)}(c_{\mathrm{op}})$ and
    \begin{equation}
      \# X_{p,w_{\ast}}^{(p)}((ac_1,c_2)) = \# X_{p,w_{\ast}}^{(p)}((c_1,a^{-1}c_2)),\label{eq:swap}
    \end{equation}
    for $a\in \mathbb{Z}_p^{\times}$.
  \item If $X_{q,w_{\ast}}^{(p)}(c)\neq \emptyset$, then we must have $v_p(c_1),v_p(c_2)\geq 3v_p(q)$. Moreover, if $3v_p(q)\leq \min(v_p(c_1),v_p(c_2))<4v_p(q)$, then $X_{q,w_{\ast}}^{(p)}(c)= \emptyset$ unless $c_1\equiv c_2\, (\text{\rm mod }p^{4v_p(q)}).$
  \item For $\epsilon > 0$, we have the general upper bound
    \begin{equation}
      \# X_{q,w_{\ast}}^{(p)}(c) \ll \frac{\mathcal{N}_{v_p(q)}}{p^{2v_p(q)(1 - \epsilon)}}\cdot \vert c_1c_2\vert_p^{-1}. \nonumber
    \end{equation}
  \end{enumerate}
\end{lemmy}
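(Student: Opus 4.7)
For part (1), I would establish the two symmetries via explicit bijections of Kloosterman data. The equality $\#X_{q,w_\ast}^{(p)}(c)=\#X_{q,w_\ast}^{(p)}(c_{\mathrm{op}})$ comes from the involution $\iota(g) = w_l g^{-\top} w_l^{-1}$ already used in the proof of Lemma~\ref{lm:divisibility}: since $n=3$ gives $w_l^{\top} = w_l^{-1}$, one checks $\iota(w_\ast) = w_\ast$; conjugation by $w_l$ reverses diagonal entries, so $\iota(c^\ast) = c_{\mathrm{op}}^\ast$; and $\iota$ permutes the left $U(\mathbb{Z}_p)$- and right $U_{w_\ast}(\mathbb{Z}_p)$-cosets appropriately. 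For \eqref{eq:swap} I would conjugate by $t = \diag(a, 1, 1)$. Since $t \in T(\mathbb{Z}_p)$ normalises $K_p(q)^{\natural}$ (diagonal units preserve each off-diagonal filtration), and a direct computation gives $t c^\ast w_\ast t^{-1} = \diag(a,1,a^{-1})c^\ast w_\ast = (c')^\ast w_\ast$ with $c' = (c_1/a, c_2/a)$, starting from modulus $(ac_1, c_2)$ delivers $(c_1, a^{-1}c_2)$.

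For part (2), parametrise $x = u(x_{12}, x_{13}, x_{23})$ and $y = u(y_{12}, y_{13}, y_{23})$ and compute the nine entries of $xc^\ast w_\ast y$ as in the proof of Lemma~\ref{lm:gl3_set_est_vor}. The $(3,1)$-entry is $c_1$, and the $K_p(q)^{\natural}$-condition forces $(3,1)\in q^3\mathbb{Z}_p$, giving $v_p(c_1) \geq 3v_p(q)$; the statement for $c_2$ follows from the $c_{\mathrm{op}}$-symmetry. The middle-diagonal entry equals $x_{23}c_1 y_{12} + c_2/c_1$, subject to $x_{23}c_1 y_{12} + c_2/c_1 - 1 \in q\mathbb{Z}_p$. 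Combined with the constraints $x_{23}c_1 \in q^2\mathbb{Z}_p$ and $c_1 y_{12} \in q^2 \mathbb{Z}_p$ (from the $(2,1)$- and $(3,2)$-entries), one gets $x_{23}c_1 y_{12} \in (q^4/c_1)\mathbb{Z}_p$; multiplying through by $c_1$ yields $c_2 - c_1 \in q c_1\mathbb{Z}_p + q^4 \mathbb{Z}_p = p^{\min(s + u_1, 4s)}\mathbb{Z}_p$ with $s := v_p(q)$ and $u_1 := v_p(c_1) \geq 3s$. The minimum is therefore $4s$, giving $c_1 \equiv c_2 \pmod{p^{4s}}$.

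For part (3), I would extend the direct counting from Lemma~\ref{lm:gl3_set_est_vor}. By the symmetries in (1) we may assume $c_1 = p^{u_1}$, $c_2 = p^{u_2}$ with $u_1 \leq u_2$ and both $\geq 3s$. The diagonal constraints fix $x_{13}, y_{13} \in c_1^{-1}(1+q\mathbb{Z}_p)$, giving $p^{u_1-s}$ choices each, and the $(2,1)$, $(3,2)$ entries force $x_{23}, y_{12} \in (q^2/c_1)\mathbb{Z}_p$, giving $p^{u_1-2s}$ choices each. The middle-diagonal then becomes the bilinear equation $x_{23} y_{12} \equiv (c_1 - c_2)/(p^{4s} \tilde c_1) \pmod{p^{u_1 - 3s}}$ with $\tilde c_1 := c_1/p^{u_1}$, whose solution count is $O(p^{u_1 - s + \epsilon s})$ by a standard divisor-function estimate. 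Entries $(1,2), (2,3)$ then determine $x_{12}, y_{23}$ modulo $p^{u_1-u_2}\mathbb{Z}_p$, giving $p^{2(u_2-u_1)}$ choices total, and the $(1,3)$-entry provides the last modular condition, saving approximately a factor of $p^{2s}$ to deliver the claimed bound $\mathcal{N}_{p^s}/p^{2s(1-\epsilon)} \cdot \vert c_1 c_2\vert_p^{-1}$.

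The main obstacle is the coupled bookkeeping in part (3). When either $x_{23}$ or $y_{12}$ has large $p$-valuation, the bilinear middle-diagonal equation admits substantially more solutions than the generic count, and these degenerate strata must be absorbed into the $p^{\epsilon s}$ loss; this is where the divisor-function bound is essential. Moreover, after imposing $(1,2)$ and $(2,3)$, one finds $x_{12} \equiv -p^{2s-u_2}\tilde y_{12} \pmod{\mathbb{Z}_p}$ and $y_{23} \equiv -p^{2s-u_2}\tilde x_{23} \pmod{\mathbb{Z}_p}$ (writing $y_{12} = (q^2/c_1)\tilde y_{12}$ and similarly for $x_{23}$), so substituting back into $(1,3)$ yields a polynomial relation coupling $x_{13}, y_{13}, x_{23}, y_{12}$; extracting a clean $p^{2s(1-\epsilon)}$ saving uniformly across all configurations of $(u_1, u_2, s)$ will require a careful analysis of this relation's $p$-adic digit structure.
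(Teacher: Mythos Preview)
Your treatment of parts (1) and (2) is correct and essentially identical to the paper's: the $c\leftrightarrow c_{\mathrm{op}}$ symmetry via $\iota$, the scaling symmetry via conjugation by a diagonal unit, and the congruence $c_1\equiv c_2\pmod{p^{4s}}$ extracted from the $(2,2)$-entry combined with the $(2,1)$- and $(3,2)$-constraints all match the paper's argument.

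Part (3) follows the paper's strategy (direct entry-by-entry counting after reducing via the symmetries), and your intermediate tally is right: with $s=v_p(q)$ and $u_1\le u_2$, the contributions of $(x_{13},y_{13})$, the bilinear $(2,2)$-constraint on $(x_{23},y_{12})$, and the $(1,2),(2,3)$-entries together give $\ll p^{\,u_1+2u_2-3s+\epsilon}$. The gap is in your final step. You assert that the $(1,3)$-condition saves ``approximately $p^{2s}$'', but the target bound is $p^{u_1+u_2-s+\epsilon}$, so the saving actually required is $p^{u_2-2s}$, which grows with $u_2$ and equals $p^{2s}$ only in the borderline case $u_2=4s$. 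With a fixed $p^{2s}$ saving your count stays at $p^{u_1+2u_2-5s}$, which exceeds the claimed bound whenever $u_2>4s$.

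The paper obtains the correct saving by substituting all the earlier parametrisations into the $(1,3)$-relation and observing that, after these substitutions, the condition determines one of the variables modulo $p^{u_2-2s}$ (equivalently, in its notation with $c=(p^u,sp^{u+v})$, the final relation saves $p^{u+v-2s}$). Concretely, once you write $x_{12},y_{23}$ in terms of $\tilde y_{12},\tilde x_{23}$ via the $(1,2),(2,3)$-entries and insert these into $(1,3)$, the leading term has valuation $4s-u_1-u_2$ rather than $-u_1$; it is this lower valuation that accounts for the stronger saving. Your ``obstacle'' paragraph correctly flags that the substitution couples all variables, but the resolution is not to squeeze out $p^{2s}$ uniformly---it is to recognise that the coupled condition is genuinely of depth $u_2-2s$, and the paper's case split ($u<4s$ versus $u\ge 4s$, with a further parametrisation $\alpha'=(z')^{-1}(\cdots)+\alpha''p^{u-3s}$ in the second case) is one clean way to extract it.
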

\begin{proof}
  To see the first claim, one uses ideas from the proof of Lemma~\ref{lm:gl3_set_est_vor}. Indeed, observing that $\iota(w_{\ast}) = w_{\ast}$, we can swap   $c$ and $c_{\mathrm{op}}$. Furthermore, conjugation by $t=\diag(t_1,t_2,t_3)$ induces a bijection between $X_{p,w_{\ast}}^{(p)}((c_1,c_2))$ and $X_{p,w_{\ast}}^{(p)}((\frac{t_1}{t_3}c_1,\frac{t_1}{t_3}c_2))$. Thus \eqref{eq:swap} follows by choosing $t_1=t_2=1$ and $t_3=a$.
  
  We now come to the counting result. Using the observations above, we can assume without loss of generality that $c=(p^u,sp^{u+v})$ with $u>0$, $v \geq 0$ and $s\in \mathbb{Z}_p^{\times}$. As before, we directly compute
  \begin{equation*}
    \left(
      \begin{matrix}
        1 & x & y \\ 0&1&z \\ 0&0&1
      \end{matrix}
    \right) c^{\ast} w_{\ast}\left(
      \begin{matrix}
        1 & \beta &\alpha \\ 0 &1&\gamma \\ 0&0&1
      \end{matrix}
    \right) = \left( 
      \begin{matrix}
        y p^u & xsp^v+y\alpha p^u & -\frac{1}{sp^{u+v}}+x\gamma sp^v + y\beta p^u \\ zp^u & sp^v+p^u\alpha z & s\gamma p^v+z\beta p^u \\ p^u & \alpha p^u & \beta p^u
      \end{matrix}
    \right) \in K_p(q)^{\natural}.
  \end{equation*}
  We immediately see that we can assume $u\geq 3v_p(q)$ and put
  \begin{align}
    z &= p^{2v_p(q)-u}z' \text{ for }z'\in \mathbb{Z}_p/p^{u-2v_p(q)}\mathbb{Z}_p,\nonumber\\
    \alpha &= p^{2v_p(q)-u}\alpha' \text{ for }\alpha'\in \mathbb{Z}_p/p^{u-2v_p(q)}\mathbb{Z}_p,\nonumber \\
    y &= p^{-u}(1+p^{v_p(q)}y') \text{ for }y'\in \mathbb{Z}_p/p^{u-v_p(q)}\mathbb{Z}_p \text{ and }\nonumber \\
    \beta &= p^{-u}(1+p^{v_p(q)}\beta') \text{ for }\beta'\in \mathbb{Z}_p/p^{u-v_p(q)}\mathbb{Z}_p.\nonumber
  \end{align}
  The remaining conditions read
  \begin{align}
    &sp^v+\alpha'z' p^{4v_p(q)-u} \in 1+q\mathbb{Z}_p, \label{middel} \\
    & \gamma sp^v + z'p^{2v_p(q)-u} + \beta'z'p^{3v_p(q)-u} \in \mathbb{Z}_p, \label{dright} \\
    & xsp^v+\alpha'p^{2v_p(q)-u}+\alpha'y'p^{3v_p(q)-u} \in \mathbb{Z}_p\text{ and }\label{uleft} \\
    & p^{-u}-\frac{1}{sp^{u+v}}+\gamma x sp^v +  p^{v_p(q)-u}(y'+\beta')+p^{2v_p(q)-u}y'\beta' \in \frac{1}{q}\mathbb{Z}_p.\label{uright}
  \end{align}
  We consider $3v_p(q)\leq u<4v_p(q)$. Then we see that \eqref{middel} can only be true if $s\equiv 1\text{ mod }p^{4v_p(q)-u}$ and $v=0$. (Without loss of generality we assume $s=1+s'p^{4v_p(q)-u}$.) This completes in particular the proof of (2). 
  
  The conditions \eqref{dright} and \eqref{uleft} now determine $\gamma = -s^{-1}(z'p^{2v_p(q)-u}+\beta'z'p^{3v_p(q)-u} )$ and $x=-s^{-1}(\alpha'p^{2v_p(q)-u}+\alpha'y'p^{3v_p(q)-u})$. Estimating trivially at this point gives at most $p^{4u-6v_p(q)}$ possible choices $(y',z',\alpha',\beta')$ which determine $x$ and $\gamma$. As before we invoke now the last condition \eqref{uright} which after inserting all previous definitions reads
  \begin{displaymath}
    \begin{split}
      \frac{q^4}{p^{2u}} \Big( \frac{\alpha'z'}{s} (1 +\beta' q)(1 + y'q) + \frac{s'}{s}  + \frac{  p^u}{q^3} (\beta'+ y' + \beta'y' q)\Big) \in \frac{1}{q} \mathbb{Z}_p . 
    \end{split}
  \end{displaymath}
  By the same argument in the previous lemma that determines $\alpha' z'$ modulo $p^{2u}/q^5$, and we get a total count of
  \begin{equation*}
    (2u - v_p(q))p^{2u - v_p(q)}
  \end{equation*}
  for the 6-tuple $(y, z, \alpha, \beta, x, y)$, as desired for (3) (as $\mathcal{N}_{v_p(q)} = q$ in the present case).


  We turn towards the remaining cases namely $u\geq 4v_p(q)$ and $v\geq 0$. First observe that \eqref{middel} yields
  \begin{equation}
    \alpha'z' \equiv p^{u-4v_p(q)}-sp^{u+v-4v_p(q)} \, (\text{mod }p^{u-3v_p(q)}). \nonumber
  \end{equation}
  This has now always solutions and without loss of generality we can assume that
  \begin{equation}
    \alpha' = (z')^{-1}(1-sp^v)p^{u-4v_p(q)} + \alpha''p^{u-3v_p(q)} \text{ for }\alpha''\in \mathbb{Z}_p/p^{v_p(q)}\mathbb{Z}_p.\nonumber
  \end{equation}
  (The other cases will contribute at most the same to the Kloosterman set.) Next we use \eqref{dright} and \eqref{uleft} to choose
  \begin{equation}
    x=-s^{-1}p^{-v}(\alpha'(1+p^{v_p(q)}y')p^{2-u}+x'), \quad \gamma=-s^{-1}p^{-v}(z'(1+p^{-v_p(q)}\beta')p^{2-u}+\gamma') \nonumber
  \end{equation}
  for $x',\gamma'\in \mathbb{Z}_p/p^v\mathbb{Z}_p$. Estimating trivially at this point gives a bound of the form $\ll p^{3u+2v-3v_p(q)}$. The final condition \eqref{uright} reads
  \begin{multline}
    p^{2v_p(q)-u-v}\left[\alpha''z'+\beta'+y'+ p^{v_p(q)}(y'\beta'+\alpha''y'z'+\alpha''z'\beta'+x'z')+p^{2v_p(q)}x'z'\beta' \right] \\
    +p^{-v-v_p(q)}\left[(z')^{-1}\gamma'+p^{v_p(q)}((z')^{-1}y'\gamma'+\alpha''\gamma')+p^{2v_p(q)}(x'\gamma'+\alpha''y'\gamma')\right] \\
    +p^{4v_p(q)-u}(\alpha''y'z'\beta')+p^{-2v_p(q)}((z')^{-1}\gamma') \in \mathbb{Z}_p.\nonumber
  \end{multline}
  By the now familiar argument as in previous situations, this lets us save $p^{u+v-2v_p(q)}$ and the result follows.
\end{proof}

For $\mathrm{GL}_n$ with $n\geq 4$ we will study the Weyl element $w_{\ast}$ more carefully. Here we will still count the elements in the (ramified) Kloosterman sets, but we will do so not as directly as in the case $n=3$.

\begin{lemmy}\label{lem:kszeta}
  For $\alpha,\beta\geq 0$ and $s\in \mathbb{Z}_p^{\times}$ we put $c(\alpha,\beta;s) = (p^{\alpha},p^{\alpha+\beta}s,\ldots,p^{\alpha+(n-2)\beta}s^{n-2})$. We have
  \begin{equation}
    \# X_{q,w_{\ast}}^{(p)}(c(\alpha,\beta;s)) \ll \frac{\mathcal{N}_{p^{v_p(q)}}}{p^{v_p(q)(n-1)}}p^{(n-1+\epsilon)\alpha+\frac{(n-1)(n-2)}{2}\beta}.\nonumber
  \end{equation}
  Furthermore, $X_{q,w_{\ast}}^{(p)}(c(\alpha,\beta;s)) =\emptyset$ unless $\alpha\geq n v_p(q)$.
\end{lemmy}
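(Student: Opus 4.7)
\emph{Proof plan.}  The support claim $\alpha\ge nv_p(q)$ is immediate from Lemma~\ref{lm:divisibility}(2) applied to $c(\alpha,\beta;s)$, whose smallest entry is $c_1=p^\alpha$; equivalently, it is forced by the $(n,1)$-entry of $xc^{\ast}w_{\ast}y$, which equals $p^\alpha$ and must lie in $q^n\mathfrak{o}$.

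For the main inequality, the plan is to rewrite the count as an iterated $p$-adic integral
\[
\#X_{q,w_{\ast}}^{(p)}(c)=\frac{1}{\mathrm{vol}(U(\mathfrak{o}))\,\mathrm{vol}(U_{w_{\ast}}(\mathfrak{o}))}\int_{U(F)}\int_{U_{w_{\ast}}(F)}\mathbf{1}_{K_p(q)^\natural}(xc^{\ast}w_{\ast}y)\,dx\,dy,
\]
and to integrate the variables in a prescribed order.  Setting $A:=c_{n-1}^{-1}$, $B:=p^\beta s$, $C:=p^\alpha$, the fact that $w_{\ast}$ is supported only at the two corners $(1,n),(n,1)$ and on the middle diagonal gives $g:=xc^{\ast}w_{\ast}y$ a rigid shape: first column $g_{i1}=Cx_{in}$, last row $g_{nj}=Cy_{1j}$, middle block $g_{ij}=Bx_{ij}+Cx_{in}y_{1j}$ for $i<n$, $2\le j\le n-1$ (with $x_{ii}=1$, $x_{ij}=0$ for $i>j$), and $g_{in}=-A\delta_{i=1}+Cx_{in}y_{1n}+B\sum_{k\ge\max(2,i)}x_{ik}y_{kn}$ in the last column.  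The condition $g\in K_p(q)^\natural$ then reads entrywise as $g_{ij}-\delta_{ij}\in q^{i-j+1}\mathfrak{o}$.

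I intend to process these conditions in four passes.  First, the first-column and last-row conditions pin each $x_{in}$ (resp.\ $y_{1j}$) to a coset of $q^ip^{-\alpha}\mathfrak{o}$ (resp.\ $q^{n-j+1}p^{-\alpha}\mathfrak{o}$), contributing the expected main term $\asymp p^{2(n-1)\alpha - n(n-1)v_p(q)}$.  Second, for $2\le i<j\le n-1$ the cross-term $Cx_{in}y_{1j}$ already lies in $q^{i-j+1}\mathfrak{o}$ (this is where the hypothesis $\alpha\ge nv_p(q)$ is used decisively), so the middle-block conditions decouple and pin each $x_{ij}$ to a coset of $q^{i-j+1}p^{-\beta}\mathfrak{o}$.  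Third, the last-column conditions are resolved by backward recursion on $i$, pinning $y_{in}$ to a coset of $q^{i-n+1}p^{-\beta}\mathfrak{o}$; symmetrically the first-row conditions $g_{1j}\in q^{2-j}\mathfrak{o}$ pin $x_{1j}$ (here the cross-term is \emph{not} automatic because $Cx_{1n}$ is a unit, so the coset centre is shifted).  Fourth, the still-unused middle-diagonal conditions $B+Cx_{in}y_{1i}\equiv 1\pmod q$ and the corner condition $-A+Cx_{1n}y_{1n}+B\sum_k x_{1k}y_{kn}\in q^{2-n}\mathfrak{o}$ now couple back to the already-fixed variables: rewriting them as residue conditions in $\mathfrak{o}/q\mathfrak{o}$ via Fourier duality produces precisely the extra $p^{v_p(q)(n-1)}$ of saving over the trivial bound \eqref{eq:triv_gen_ks}, giving the target \eqref{general_expectation} specialized to our family.

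\emph{Main obstacle.}  The fourth pass is where the work lies.  The diagonal and corner conditions couple variables that were pinned independently in the earlier passes, through polynomial congruences whose solvability depends sensitively on whether $B\equiv 1\pmod{q^\ell}$ for various $\ell$, equivalently on $\beta$ and on the residue of $s$ modulo $q$.  For $\beta=0$ and $s\equiv 1$ the couplings are mild and the saving is produced directly; for other $(\beta,s)$ one must either verify that the Kloosterman set is empty outright (in which case the bound is trivial) or execute the Fourier-theoretic pinning uniformly.  Organizing this uniformly in $\alpha,\beta,s$ and $v_p(q)$ via a single clean $p$-adic integral, rather than enumerating cases as in the $n=3$ proof of Lemma~\ref{deg3_prime_long}, is exactly what the framework is built to accomplish; the $p^{\epsilon\alpha}$ slack in the final bound absorbs a residual case analysis in the borderline range $nv_p(q)\le\alpha<(n+1)v_p(q)$.
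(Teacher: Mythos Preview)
Your plan has a concrete error and a genuine gap, both in the middle of the argument.

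First, the claim in your second pass is false.  You assert that for $2\le i<j\le n-1$ the cross term $Cx_{in}y_{1j}$ already lies in $q^{i-j+1}\mathfrak{o}$ once $x_{in}\in q^ip^{-\alpha}\mathfrak{o}$ and $y_{1j}\in q^{n-j+1}p^{-\alpha}\mathfrak{o}$.  But then $v_p(Cx_{in}y_{1j})\ge (i+n-j+1)v_p(q)-\alpha$, and this is $\ge(i-j+1)v_p(q)$ only when $\alpha\le nv_p(q)$, which is the \emph{reverse} of the support condition.  For $\alpha>nv_p(q)$ the cross term is genuinely large.  For the above-diagonal entries this is harmless (the coset size of $x_{ij}$ is unchanged), but you have entirely omitted the \emph{below}-diagonal middle-block conditions $g_{ij}=Cx_{in}y_{1j}\in q^{i-j+1}\mathfrak{o}$ for $2\le j<i\le n-1$, where $x_{ij}=0$ and nothing absorbs the cross term.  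These $\binom{n-2}{2}$ constraints are real for $\alpha>nv_p(q)$ and must be accounted for; they couple the first-column and last-row variables just as the diagonal conditions do.

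Second, your fourth pass is where the entire difficulty lives, and ``Fourier duality produces precisely the extra $p^{v_p(q)(n-1)}$'' is not an argument.  After passes 1--3 you are left with a system of bilinear constraints on the products $x_{in}y_{1j}$ (diagonal, below-diagonal, and the corner, which through the shifted centres of $x_{1k}$ and $y_{kn}$ drags in further products).  Analyzing this coupled system uniformly in $\alpha,\beta,s$ is exactly the obstacle your plan was supposed to overcome, and you have not indicated how.

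The paper avoids this tangle altogether.  Rather than integrating over $U\times U_{w_\ast}$, it observes that for fixed $Y\in U_{w_\ast}$ the element $X\in U$ is determined up to $U\cap K_p(q)^\natural$, giving the prefactor $\mathcal{N}_{p^{v_p(q)}}$ and a single integral over $U_{w_\ast}$.  The crucial move is then to compute the Bruhat decomposition $w_\ast Y=u_Y\,t(Y)\,\overline{u}_Y$ explicitly and to pass to coordinates $\mathbf{z}'$ with $z'_i=1-\sum_{j\ge n-i}x_jz_j$ (the ``partial inner products'').  In these coordinates the torus part $t(Y)$ becomes $\mathrm{diag}(1/(yz_1'),z_1'/z_2',\dots,z_{n-2}',y)$, so the torus condition pins each $z_i'$ and $y$ to a single coset of $(1+q\mathbb{Z}_p)$; the lower-triangular entries of $\overline{u}_Y$ become monomial ratios like $x_k/x_l\cdot(1-u_lsp^\beta)$.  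After executing the $(\mathbf{z}',y)$-integral one is left with a \emph{decoupled} integral over $\mathbf{x}\in\mathbb{Q}_p^{n-2}$ and $\mathbf{u}\in(1+q\mathbb{Z}_p)^{n-2}$, which is handled by a short case split on $\beta>0$ versus $\beta=0$ (the latter via a row-by-row comparison of valuations).  The change of variables $\mathbf{z}\to\mathbf{z}'$ is the idea your proposal is missing; without it, the bilinear coupling you defer to pass~4 does not untangle.
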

\begin{proof}
  To simplify notation, we write
  \begin{equation}
    t_0=c(\alpha,\beta;s)^{\ast}=\left(
      \begin{matrix}
        p^{-\alpha-(n-2)\beta}s^{-(n-2)} & 0& 0\\ 0& sp^{\beta}\cdot I_{n-2} & 0\\ 0&0&p^{\alpha}
      \end{matrix}
    \right)\in T_0(\Q_p).\nonumber
  \end{equation}	

  Let us fix $Y\in U_{w_{\ast}}(\Q_p)$ for a moment and assume that $Xt_0w_{\ast}Y=k$ and $X't_0w_{\ast}Y=k'$ with $k,k'\in K_p(q)^{\natural}$ and $X, X' \in U(\mathbb{Q}_p)$. Then we have
  \begin{equation}
    X'X^{-1}= k'k^{-1} \in K_p(q)^{\natural}\cap U(\mathbb{Q}_p).\nonumber
  \end{equation}
  In other words, if $t_0w_{\ast}Y\in U(\Q_p)K_p(q)^{\natural}$, then $X\in U(\Q_p)$ with $Xt_0w_{\ast}Y \in K_p(q)^{\natural}$ is uniquely determined up to left multiplication by elements in $K_p(q)^{\natural}\cap U(\mathbb{Q}_p)$. Thus we find that
  \begin{equation}
    \# X_{q,w_{\ast}}^{(p)}(c(\alpha,\beta;s)) = \mathcal{N}_{p^{v_p(q)}}\cdot \#\{Y\in U_{w_{\ast}}(\Q_p)/U_{w_{\ast}}(\Z_p)\colon t_0w_{\ast}Y \in U(\Q_p)K_p(q)^{\natural} \}.\nonumber
  \end{equation}
  Recall that $K_p(q)$ has a Iwahori decomposition with respect to the minimal Borel subgroup, so we can write any $k\in K_p(q)^{\natural}$ as
  \begin{equation}
    k = D_q^{-1}u_kt_k\overline{u}_kD_q \text{ with }t_k\in T_0(1+q\mathbb{Z}_p),\, u_k\in U(q\mathbb{Z}_p) \text{ and }\overline{u}_k\in U(q\mathbb{Z}_p)^{\top}.\nonumber
  \end{equation}
  Thus we can write
  \begin{equation}
    w_{\ast}Y = u_Yt(Y)\overline{u}_Y \in U(\Q_p)T_0(\Q_p)D_q^{-1}U(p\Z_p)^{\top}D_q.\nonumber
  \end{equation}
  If $Y$ contributes to the count for $\# X_{q,w_{\ast}}^{(p)}(c(\alpha,\beta;s))$ we must have $t_Y\in t_0^{-1}\cdot T_0(1+q\Z_p)$. We can now rewrite our count as follows
  \begin{align}
    \# X_{q,w_{\ast}}^{(p)}(c(\alpha,\beta;s)) &= \mathcal{N}_{p^{v_p(q)}}\cdot \int_{U_{w_{\ast}}(\Q_p)} \mathbbm{1}_{U(\Q_p)t_0^{-1}K_p(q)^{\natural}}(w_{\ast}Y)dY \nonumber\\
                                   &= \int_{U_{w_{\ast}}(\Q_p)} \mathbbm{1}_{U(q\mathbb{Z}_p)^{\top}}(D_q\overline{u}_YD_q^{-1})\mathbbm{1}_{t_0^{-1}T_0(1+q\Z_p)}(t(Y))dY.\label{eq:triv}
  \end{align}
  So far, our argument works for arbitrary Weyl elements. However, we will continue to explicitly compute the decomposition $w_{\ast}Y = u_Yt(Y)\overline{u}_Y$. This will crucially use the simple shape of $w_{\ast}$.

  Write $D_q=\diag(q^{n-1},\tilde{D}_q,1)$. In the coordinates
  \begin{equation}
    U_{w_{\ast}}(\Q_p) = \left\{Y=\left(
        \begin{matrix}
          1&y\mathbf{x}\tilde{D}_q&y\\ 0&I_{n-2}& \tilde{D}_q^{-1}\mathbf{z} \\0&0&1
        \end{matrix}
      \right)\colon \mathbf{x}^{\top},\mathbf{z}\in \Q_p^{n-2},\, y\in \Q_p\right\}, \nonumber
  \end{equation}
  the measure on $U_{w_{\ast}}(\Q_p)$ is given by $\vert y\vert^{n-2}_pd\mathbf{x}\, dy\, d\mathbf{z}$. Put
  \begin{equation}
    D_qw_{\ast}YD_q^{-1} = r(\mathbf{x},y,\mathbf{z}) = \left(
      \begin{matrix}
        0&0& -q^{n-1} \\ 0& I_{n-2} & \mathbf{z} \\ q^{1-n} & y\mathbf{x} & y 
      \end{matrix}
    \right).\nonumber
  \end{equation}
  On the support of the integral, we can write $r(\mathbf{x},y,\mathbf{z}) = ut\overline{u}$ for $u=u(\mathbf{x},y,\mathbf{z})\in U(\mathbb{Q}_p)$, $t=t(\mathbf{x},y,\mathbf{z})\in t_0^{-1}T_0(1+q\Z_p)$ and $\overline{u}=\overline{u}(\mathbf{x},y,\mathbf{z}) \in U(q\Z_p)^{\top}$. We arrive at the expression
  \begin{multline}
    \# X_{q,w_{\ast}}^{(p)}(c(\alpha,\beta;s)) = \mathcal{N}_{p^{v_p(q)}}\cdot  \int_{\Q_p^{n-2}}\int_{\Q_p}\int_{\Q_p^{n-2}} \mathbbm{1}_{U(q\mathbb{Z}_p)^{\top}}(\overline{u}(\mathbf{x},y,\mathbf{z}))\\ \cdot \mathbbm{1}_{t_0^{-1}T_0(1+q\Z_p)}(t(\mathbf{x},y\mathbf{z}))\vert y\vert^{n-2}_pd\mathbf{x}\, dy\, d\mathbf{z}.\nonumber
  \end{multline}

  To compute this integral, we will try to explicitly solve
  \begin{equation*}
    b\cdot r(\mathbf{x},y,\mathbf{z}) \in U(q\mathbb{Z}_p)^{\top}, \label{cond_w_ast_mat}
  \end{equation*}
  for $b=t(\mathbf{x},y,\mathbf{z})^{-1}u(\mathbf{x},y,\mathbf{z})^{-1}$. We can essentially write down $b$ explicitly as follows:
  \begin{multline}
    \underbrace{\left(
        \begin{matrix}
          y(1-\mathbf{x}\cdot \mathbf{z}) & -q^{n-1}y\mathbf{x} & q^{n-1} \\ 0& B & -\frac{1}{y}\cdot B\mathbf{z}\\ 0&0&\frac{1}{y}
        \end{matrix}
      \right)}_{=b}\underbrace{\left(
        \begin{matrix}
          0&0&-q^{n-1} \\ 0 & I_{n-2} & \mathbf{z} \\ q^{-(n-1)} & y\mathbf{x} & y 
        \end{matrix}
      \right)}_{=r(\mathbf{x},y,\mathbf{z})} \\ = \left(
      \begin{matrix}
        1 & 0 & 0\\ -\frac{1}{yq^{n-1}}\cdot B\mathbf{z} & B(I_{n-2}-\mathbf{z}\cdot \mathbf{x}) & 0 \\ \frac{1}{q^{n-1}y} & \mathbf{x} & 1  
      \end{matrix}
    \right).\nonumber
  \end{multline}
  Observe that
  \begin{equation}
    \det(I_{n-2}-\mathbf{z}\cdot \mathbf{x}) = 1-\mathbf{x}\cdot \mathbf{z} = \det(B)^{-1}.\nonumber
  \end{equation}
  In order for $B(I_{n-2}-\mathbf{z}\cdot \mathbf{x})$ to be in $U(\mathbb{Q}_p)^{\top}$ we choose
  \begin{equation}
    B = B_{\mathbf{x},\mathbf{z}} = \left( 
      \begin{matrix} 
        \frac{d_{n-2}}{d_{n-1}} & \frac{x_2z_1}{d_{n-1}}  & \cdots & \frac{x_{n-3}z_1}{d_{n-1}} & \frac{x_{n-2}z_1} {d_{n-1}} \\
        0 & \frac{d_{n-3}}{d_{n-2}} & \ddots & \ddots & \frac{x_{n-2}z_2}{d_{n-2}} \\
        \vdots & \ddots &\ddots & \ddots  & \vdots\\ 
        0 & \ddots  & \ddots  & \frac{d_2}{d_3} & \frac{x_{n-2}z_{n-3}}{d_3} \\ 
        0 &0  &\cdots & 0 & d_2^{-1} 
      \end{matrix}
    \right).\nonumber
  \end{equation}
  where
  \begin{equation}
    d_i = d_i(\mathbf{x},\mathbf{z}) = 1-\sum_{j=n-i}^{n-2}x_jz_j \text{ for }i=2,\ldots,n-1. \nonumber
  \end{equation}
  In particular, $d_{n-1}=1-\mathbf{x}\cdot \mathbf{z}.$

  We make the change of variables $\mathbf{z}\mapsto\mathbf{z}'$ where $z_{n-2}'=1-z_{n-2}x_{n-2}$ and $z_i'=z_{i+1}'-x_iz_i$. Thus we obtain
  \begin{multline}
    \# X_{q,w_{\ast}}^{(p)}(c(\alpha,\beta;s)) = \mathcal{N}_{p^{v_p(q)}}\cdot  \int_{\Q_p^{n-2}}\int_{\Q_p}\int_{\Q_p^{n-2}} \mathbbm{1}_{U(q\mathbb{Z}_p)^{\top}}(\overline{u}(\mathbf{x},y,\mathbf{z}'))\\ \cdot \mathbbm{1}_{t_0^{-1}T_0(1+q\Z_p)}(t(\mathbf{x},y,\mathbf{z}'))\cdot \vert y'\vert_p^{n-1} \frac{d\mathbf{x}\, dy\, d\mathbf{z}'}{\vert x_1\cdots x_{n-2}y\vert_p}.\nonumber
  \end{multline}
  The upshot is that $d_i = z_{n-i}'$.  One computes inductively that in the new coordinates we have
  \begin{equation}
    b\cdot r(\mathbf{x},y,\mathbf{z}') = \left(
      \begin{matrix}
        1 & 0 & 0 &\ldots & 0 & 0 \\
        \frac{1}{q^{n-1}yx_1}(1-\frac{z_2'}{z_1'}) & 1 & 0 & \ddots & 0 & 0 \\
        \frac{1}{q^{n-1}yx_2}(1-\frac{z_3'}{z_2'})& \frac{x_1}{x_2}(1-\frac{z_3'}{z_2'}) & 1 & \ddots & \ddots &  \vdots \\
        \vdots  & \ddots & \ddots & \ddots & \ddots & \vdots  \\
        \vdots & \ddots & \ddots & 1 & 0& 0 \\
        \frac{1}{q^{n-1}yx_{n-2}}(1-\frac{1}{z_{n-2}'}) & \frac{x_1}{x_{n-2}}(1-\frac{1}{z_{n-2}'}) & \ddots &\frac{x_{n-3}}{x_{n-2}}(1-\frac{1}{z_{n-2}'}) & 1 & 0\\
        \frac{1}{q^{n-1}y} & x_1 & \vdots& x_{n-3} & x_{n-2} & 1
      \end{matrix}
    \right).\nonumber
  \end{equation}
  Furthermore, since $b=t(\mathbf{x},y,\mathbf{z})^{-1}u(\mathbf{x},y,\mathbf{z})^{-1}$, we have
  \begin{equation}
    t(\mathbf{x},y,\mathbf{z})=\mathrm{diag}\Big(\frac{1}{yz_1'},\frac{z_1'}{z_2'},\ldots,z_{n-2}',y\Big).\nonumber
  \end{equation}
  In particular, since $t(\mathbf{x},y,\mathbf{z})\in t_0^{-1}\cdot T_0(1+q\mathbb{Z}_p)$ we find that
  \begin{equation}
    z_i' \in (sp^{\beta})^{i+1-n}(1+q\Z_p) \text{ and } y\in p^{-\alpha}(1+q\Z_p),\nonumber
  \end{equation}
  for $i=1,\ldots,n-2$, on the support of the integral. Thus we can execute the $\mathbf{z}'$- and $y$-integral to obtain
  \begin{multline}
    \# X_{q,w_{\ast}}^{(p)}(c(\alpha,\beta;s)) = \frac{\mathcal{N}_{p^{v_p(q)}}}{p^{v_p(q)}} \cdot p^{(n-1)\alpha+\frac{(n-1)(n-2)}{2}\beta} \\ \cdot\int_{(1+q\Z_p)^{n-2}}\int_{\Q_p^{n-2}} \mathbbm{1}_{U(q\mathbb{Z}_p)^{\top}}(\overline{u}'(\mathbf{x},\mathbf{u})) \frac{d\mathbf{x}\, d\mathbf{u}}{\vert x_1\cdots x_{n-2}\vert_p},\nonumber
  \end{multline}
  where we have introduced the new notation
  \begin{multline}
    \overline{u}'(\mathbf{x},\mathbf{u})=b\cdot r(q\mathbf{x},p^{-\alpha},\mathbf{z}') \\ = \left(
      \begin{matrix}
        1 & 0 & 0 &\ldots & 0 & 0 \\
        \frac{p^{\alpha}}{q^{n}x_1}(1-u_1sp^{\beta}) & 1 & 0 & \ddots & 0 & 0 \\
        \frac{p^{\alpha}}{q^{n}x_2}(1-u_2sp^{\beta})& \frac{x_1}{x_2}(1-u_2sp^{\beta}) & 1 & \ddots & \ddots &  \vdots \\
        \vdots  & \ddots & \ddots & \ddots & \ddots & \vdots  \\
        \vdots & \ddots & \ddots & 1 & 0& 0 \\
        \frac{p^{\alpha}}{q^{n}x_{n-2}}(1-u_{n-2}sp^{\beta}) & \frac{x_1}{x_{n-2}}(1-u_{n-2}sp^{\beta}) & \ddots &\frac{x_{n-3}}{x_{n-2}}(1-u_{n-2}sp^{\beta}) & 1 & 0\\
        p^{\alpha}q^{1-n} & qx_1 & \vdots& qx_{n-3} & qx_{n-2} & 1
      \end{matrix}
    \right).\nonumber
  \end{multline}
  after changing $\mathbf{x}$ to $q\mathbf{x}$ and for the obvious choice of $\mathbf{z}'$ given by $\mathbf{u}$ and $sp^{\beta}$. 

  We take a look at the condition $\overline{u}'(\mathbf{x},\mathbf{u})\in U(q\Z_p)^{\top}$. First, by looking at the bottom row we see that $\alpha\geq n v_p(q)$ (proving the last statement of the lemma) and that $\mathbf{x}\in \Z_p^{n-2}$. The remaining support conditions are now rather simple.

  First, if $\beta>0$, then the first column simply gives $v_p(x_i) \leq \alpha - (n+1) v_p(n)$. Thus the $x_i$-integral is at most $\alpha$, while the $\textbf{u}$-integral is $p^{(n-2)v_p(q)}$, and we conclude
  \begin{equation}
    \# X_{q,w_{\ast}}^{(p)}(c(\alpha,\beta;s)) \ll  \frac{\mathcal{N}_{p^{v_p(q)}}}{p^{(n-1)v_p(q)}} \cdot p^{(n-1+\epsilon)\alpha+\frac{(n-1)(n-2)}{2}\beta} \nonumber
  \end{equation}
  as desired.

  Finally, suppose $\beta=0$. In this case, we write $x_i=x_i'p^{m_i}$ with $m_i\geq 0$ for $i=1,\ldots, n-2$. We define
  \begin{equation}
    l_i = m_i- M_i + v_p(q), \quad M_i = \min(\alpha-nv_p(q),m_1,\ldots,m_{i-1}).\nonumber
  \end{equation}
  Then, each row gives the condition
  \begin{equation}
    u_i\in (s^{-1}+p^{l_i}\Z_p)\cap (1+q\Z_p).\nonumber
  \end{equation}
  We evaluate the $u_i$-integrals row by row, starting at the bottom. For each row (i.e., $i=1,\ldots,n-2$), we consider the cases $l_i\leq 0$ and $l_i>0$ separately. We start with $i=n-2$.  If $l_{n-2}\leq v_p(q)$, then the $u_{n-2}$-integral can be estimated by $q^{-1}$ and we obtain the bound
  $0\leq m_{n-2}\leq M_{n-2}$ so that the $x_{n-2}$-integral is at most $M_{n-2} + 1 \leq \alpha$.

  On the other hand if $l_{n-2}>v_p(q)$, then the $u_{n-2}$-integral can be estimated by $q^{-l_{n-2}}$ and we have the condition $m_{n-2}>M_{n-2}\geq 0$. Then the combined $x_{n-2}, u_{n-2}$-integral is at most
  \begin{equation*}
    \sum_{m_{n-2} > M_{n-2}} p^{-(m_{n-2} - M_{n-2} + v_p(q))} \ll q^{-1}.
  \end{equation*}
  Running the same argument for $i=n-3,n-4,\ldots,1$ completes the proof.
\end{proof}

\begin{rem}
  A slightly more careful analysis reveals that if $nv_p(q)\leq \alpha <(n+1)v_p(q)$, then $X_{q,w_{\ast}}^{(p)}(c(\alpha,\beta;s)^{\ast})=\emptyset$ unless $\beta=0$ and $s\in 1+p^{(n+1)v_p(q)-\alpha}\Z_p$. This precisely generalizes the support conditions from Lemma~\ref{deg3_prime_long} to $n\geq 4$ and $w_{\ast}$. We omit the details, because we will not be able to exploit this in our geometric estimate below.
\end{rem}

\begin{rem}\label{proof_triv}
  From \eqref{eq:triv}, which is valid for all Weyl elements $w$, we can easily produce a trivial bound for the ramified Kloosterman sets. Indeed we observe that $D_q^{-1}U(q\Z_p)D_q^{\top}\subseteq U(\Q_p)^{\top}$. Thus weakening the conditions yields
  \begin{align}
    \# X_{q,w}^{(p)}(c) &\leq \mathcal{N}_{p^{v_p(q)}}\cdot \int_{U_w(\Q_p)} \mathbbm{1}_{U(\mathbb{Z}_p)^{\top}}(\overline{u}_Y)\cdot \mathbbm{1}_{(c^{\ast})^{-1}T_0(\Z_p)}(t(Y))dY \nonumber\\
                        &= \mathcal{N}_{p^{v_p(q)}}\cdot \int_{U_w(\Q_p)} \mathbbm{1}_{U(\mathbb{Q}_p)c^{\ast}K_p}(wY)dY. \nonumber
  \end{align}
  From here one can proceed as in \cite{DR}. More precisely, one uses Mellin inversion to produce an average over $t_0$. This average can be computed explicitly when interpreted as the intertwining operator associated to $w$ applied to the spherical element in an induced representation. This leads to \eqref{eq:triv_gen_ks}.
\end{rem}

We now turn our attention to Kloosterman sums. These are defined by summing certain (additive) characters over the Kloosterman sets. For these sums to be well defined, we impose the following restriction on the characters: Let $N, M \in \mathbb{Z}^{n-1}$, $c \in \mathbb{N}^{n-1}$, $w \in W$, $v \in V$.  Then, provided that
\begin{equation}\label{compat}
  \theta_M\big(c^{\ast} w x w^{-1} (c^{\ast})^{-1}\big) = \theta_N^v(x)
\end{equation}
for all $x \in w^{-1}U(\mathbb{Q}) w \cap U(\mathbb{Q})$, the Kloosterman sum
\begin{equation*}
  S^v_{q, w}(M, N, c) = \sum_{x  c^{\ast} w y \in X_{q,w}(c) } \theta_M(x )\theta_N^v(y)
\end{equation*}
is well-defined, see \cite[Proposition 1.3]{Fr}. If \eqref{compat} is not met, we define
\begin{equation*}
  S^v_{q, w}(M, N, c) = 0.
\end{equation*}
Note that \eqref{compat} is equivalent to \eqref{compat_coord}. Thus $S^v_{q, w}(M, N, c) = 0$ unless $(w,c)$ is $(\theta_M,\theta_N^v)$-relevant.

The Kloosterman sum for the trivial Weyl element is easily computed:
\begin{equation}
  S^v_{q,\mathrm{id}}(M,N,c) = \delta_{\substack{N=M\\ c=(1,\ldots,1)}}\cdot \mathcal{N}_q.\nonumber
\end{equation}
We record the following relevant properties for the moduli of the element $w_{\ast}$:
\begin{lemmy}\label{lm:relevance}
  Let $n\geq 4$, $N = (\ast, 1, \ldots, 1, \ast)$, $M = (\ast, 1,\ldots, 1, \ast) \in \mathbb{N}^{n-1}$, $v\in V$ and $q \in \mathbb{N}$. Then $S^v_{q, w_{\ast}}(M, N, c)$ vanishes unless
  \begin{equation*}
    c = (q^nr, q^n rs, q^nrs^2, \ldots,  q^nrs^{n-2}) \quad \text{or} \quad  (q^nrs^{n-2}, q^n rs^{n-1},  \ldots,  q^nrs, q^nr)
  \end{equation*}
  for some $r, s \in \mathbb{N}$.
\end{lemmy}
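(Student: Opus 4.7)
The plan is to extract the asserted structure on $c$ from two sources: the relevance condition \eqref{compat_coord} inherent in the definition of the Kloosterman sum, and the divisibility constraints forced by the non-emptiness of $X_{q,w_\ast}(c)$.

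First, I will analyse \eqref{compat_coord}. The permutation associated to $w_\ast$ swaps $1$ and $n$ and fixes $\{2,\ldots,n-1\}$, so the condition $w_\ast(i)+1 = w_\ast(i+1)$ is equivalent to $i \in \{2,\ldots,n-2\}$. For each such $i$ the index $n-i$ lies in $\{2,\ldots,n-2\}$, precisely the range where the hypothesis $M = N = (\ast,1,\ldots,1,\ast)$ gives $M_{n-i} = N_{n-i} = 1$. After substituting and setting $j = n-i$, the condition \eqref{compat_coord} collapses to
\[
\frac{c_{j+1}c_{j-1}}{c_j^2} = \frac{v_{i+1}}{v_i} \in \{\pm 1\}, \qquad j = 2,\ldots,n-2.
\]
Positivity of the left-hand side (since each $c_j \in \mathbb{N}$) forces the sign to be $+1$, and hence $c_{j+1}c_{j-1} = c_j^2$. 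This is precisely the statement that $(c_1,\ldots,c_{n-1})$ forms a geometric progression of positive integers.

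Next, I will apply Lemma~\ref{lm:divisibility}(2), which forces $q^n \mid c_j$ for every $j$ whenever the Kloosterman set $X_{q,w_\ast}(c)$ is non-empty (if it is empty the sum trivially vanishes). Writing $c_j = q^n c_j'$, the sequence $(c_j')$ remains a geometric progression of positive integers, and up to the reversal symmetry $c \leftrightarrow c_{\mathrm{op}}$ such a progression of length $n-1$ takes the monomial form $c_j' = r s^{j-1}$ with $r,s \in \mathbb{N}$. Reinserting the factor $q^n$ then yields the two forms in the conclusion.

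The hard part will be the final parameterization: a priori an integer geometric progression can have a non-integer rational ratio $a/b$ in lowest terms with $a,b > 1$ (for example $(4,6,9)$ when $n-1 = 3$), which does not literally fit either monomial shape. I expect to handle this residual case through the prime-wise factorisation of the Kloosterman set recorded at the beginning of Section~\ref{sec:cnfm30d32o}: at primes $p \mid a$ the $p$-adic valuations of $c$ form a strictly increasing arithmetic progression, while at primes $p \mid b$ they strictly decrease, and these opposing trends can be shown to be simultaneously incompatible with the local non-emptiness criterion of Lemma~\ref{lem:kszeta} (in conjunction with its reversal counterpart). This forces $\min(a,b) = 1$, so that after possibly reversing $c$ the ratio becomes a true positive integer, matching the stated parameterization.
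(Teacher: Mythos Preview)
The paper gives no argument of its own here; it simply cites \cite[Theorem~4.4]{AB}. Your first three steps---using \eqref{compat_coord} at $i=2,\ldots,n-2$ together with the hypothesis on $M,N$ to force $c_{j-1}c_{j+1}=c_j^2$, and then invoking Lemma~\ref{lm:divisibility}(2) for the divisibility by $q^n$---are correct and presumably reproduce the reasoning behind that reference. You are also right to flag that an integer geometric progression need not have integer ratio, so that the two monomial parameterisations with $r,s\in\mathbb{N}$ do not literally cover a modulus such as $(4,6,9)$.

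Your proposed resolution of this last point, however, does not work. Lemma~\ref{lem:kszeta} supplies an upper bound for $\#X_{q,w_\ast}^{(p)}(c(\alpha,\beta;s))$ together with the single necessary condition $\alpha\geq n\,v_p(q)$; it is not a characterisation of which local moduli give non-empty sets. At any individual prime $p$ the valuations $v_p(c_j)$ of a geometric progression form an arithmetic progression, and after reversing if necessary the lemma yields only $\min_j v_p(c_j)\geq n\,v_p(q)$---precisely the content of Lemma~\ref{lm:divisibility}(2). There is no ``simultaneous incompatibility'' between increasing valuations at primes dividing $a$ and decreasing valuations at primes dividing $b$: each local set $X_{q,w_\ast}^{(p)}(c)$ is tested in isolation and can be non-empty in either regime, whence the global set can be non-empty by the product decomposition recorded at the start of Section~\ref{sec:cnfm30d32o}. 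To close this gap you need an additional argument (or to consult \cite{AB} directly); what your first three steps actually establish is only that $c_j=q^n m\,a^{j-1}b^{n-1-j}$ for some $m,a,b\in\mathbb{N}$ with $(a,b)=1$.
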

\begin{proof}
  This is part of \cite[Theorem~4.4]{AB}.
\end{proof}

As hinted at above, the factorization properties of Kloosterman sums for $\Gamma(q)^{\natural}$ with $q>1$ are complicated in general. We require the following result given in \cite[Lemma~4.1]{AB}. For $c = ( r, rs, rs^2, \ldots, rs^{n-2}) $ we have
\begin{equation}
  \vert S_{q,w_{\ast}}^v(N,N,c)\vert \leq \vert S_{1,w_{\ast}}^v(N,N',d)\vert\cdot \prod_{p\mid q}X_{q,w_{\ast}}^{(p)}(\overline{d}_p t_p), \label{factori}
\end{equation}
where
\begin{equation*}
  d= ( r',r's',\ldots,r'(s')^{n-2}), \overline{d}_p = (1,\overline{s}_p,\ldots,\overline{s}_p^{n-2}) \text{ and }t_p = (u_p,u_pv_p,\ldots,u_pv_p^{n-2})
\end{equation*}
with $r'=r/(r,q^{\infty})$, $s'=s/(s,q^{\infty})$, $u_p=(r,p^{\infty})$, $v_p=(s,p^{\infty})$ and a representative $\overline{s}_p$ of $s/v_p$ modulo $p^{v_p(q)}$. The new parameter $N'\in \mathbb{Z}^{n-1}$ can in principle computed explicitly, but we do not need this. It suffices to know that if the entries of $N$ are coprime to the entries of $d$, then also the entries of $N'$ are coprime to $d$. 

The trivial bound for Kloosterman sums always is $\vert S^v_{q,w_1}(M,N,c)\vert \leq \# X_{q,w}(c)$. However, in general, it is reasonable to expect that the oscillation of the characters $\theta_M(x)$ and $\theta_N(x)$ leads to some cancellation. Even for $q=1$, these non-trivial bounds are hard to come by in higher rank, but there are some results available. Let us record some important results:
\begin{itemize}
\item Let $n=3$, $c=(c_1,c_2)\in \mathbb{N}^2$ with $(c_1c_2,N_1N_2M_1M_2)=1$. We have the classical bound \cite[(5.9)]{Ste}
  $S_{1,w_l}^v(N,M,c) \ll (c_1c_2)^{1/2+\epsilon} (c_1, c_2).$ 
  Stronger bounds are obtained in \cite{DF}, but we do not need them.  This implies quite easily
  \begin{equation}
    \sum_{\substack{c_1\leq C_1, c_2\leq C_2\\ rc_1\equiv sc_2 \, (\text{mod }t)\\ (c_1c_2, t) = 1}} \frac{\vert S^v_{1,w_l}(N,M,(c_1,c_2))\vert}{c_1c_2} \ll (tC_1C_2)^{\epsilon}\Big(\frac{(r,s,t)}{t} (C_1C_2)^{1/2} +\min(C_1,  C_2)^{1/2}\Big)\label{Weil_on_av}
  \end{equation}
  for $C_1, C_2 \geq 1$, $r, s, t \in \mathbb{Z}$, $t > 0$, $\epsilon > 0$.  Note that we can allow $N, M$ to vary with $c_1, c_2$ as long as they are coprime to $c_1c_2$. 
  
\item For general $n\geq 4$ we have
  \begin{equation}
    S_{1,w_{\ast}}^v(N,M,c) \ll  (c_1\cdots c_{n-1})^{1-\frac{1}{4n}+\epsilon}\label{wast_nontriv}
  \end{equation}
  as long as the entries of $N$ and $M$ are coprime to the entries of $c$. (Here we do not display the dependence on $q$ on the left hand side.) This is \cite[Corollary~2]{BM} (where the explicit saving can be found at the end of \cite[Section~5.4]{BM}).
\end{itemize}

\section{The geometric estimate}\label{sec_g}

In general, sums of the form
\begin{equation}
  \mathcal{S}_{w,N}^{(n)}(X) = \sum_{v\in V}\sum_{  c_1,\ldots,c_{n-1}\leq X}\frac{\vert S_{q,w}^v(N,N,c)\vert}{c_1\cdots c_{n-1}}\label{eq:def_S}
\end{equation}
with $w\in W$ make an appearance on the geometric side of the Kuznetsov formula.  In this section, we derive a crucial estimate for the special Weyl element $w_{\ast}$.  As a warm up, we start with the case $n=3$.

\begin{lemmy}\label{gl_3_geo}
  Let $q,m\in \mathbb{N}$ with $(q,m)=1$ and put $N=(m,1)$. We have
  \begin{equation}
    \mathcal{S}_{w_l,N}^{(3)}(X) \ll (mqX)^{\epsilon}\mathcal{N}_q\Big(1+\frac{X}{q^{6}}\Big). \nonumber
  \end{equation}
  Furthermore, for $w\in \{w_1,w_1'\}$ we have
  \begin{equation}
    \mathcal{S}_{w,N}^{(3)}(X) \ll  q^{\epsilon}\frac{\mathcal{N}_q}{q^2} \frac{X^{1/2}}{q^2}.\nonumber
  \end{equation}
\end{lemmy}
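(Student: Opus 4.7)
The two types of Weyl elements are handled separately.

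For $w \in \{w_1, w_1'\}$, Lemma~\ref{lm:gl3_set_est_vor}(1) restricts the nonvanishing moduli to $c = (\pm m\gamma^2, \gamma)$ in the $w_1$ case (and the mirror form for $w_1'$). Focusing on $w_1$ (the other case follows identically, or by Lemma~\ref{lm:gl3_set_est_vor}(2)), Lemma~\ref{lm:gl3_set_est_vor}(3) combined with $(m,q)=1$ forces $q^2 \mid \gamma$; writing $\gamma = q^2 \gamma'$, the trivial bound $\vert S^v_{q,w_1}(N,N,c)\vert \leq \# X_{q,w_1}(c)$ together with Lemma~\ref{lm:gl3_set_est_vor}(3) at each $p \mid q$ and \eqref{eq:triv_gen_ks} at $p \nmid q$ yields $\# X_{q,w_1}(c) \ll c_1 c_2/q$, hence $\vert S^v_{q,w_1}(N,N,c)\vert/(c_1 c_2) \ll q^{-1}$. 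Summing over $\gamma' \in \mathbb{N}$ in the range $\gamma' \leq (X/(mq^4))^{1/2}$ dictated by $c_1 = mq^4(\gamma')^2 \leq X$ produces $\mathcal{S}^{(3)}_{w_1,N}(X) \ll X^{1/2}/(m^{1/2} q^3)$, matching the target since $\mathcal{N}_q = q$ for $n=3$.

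For $w = w_l = w_\ast$, Lemma~\ref{lm:relevance} restricts the nonvanishing moduli to $c = (q^3 r, q^3 rs)$ or its reverse, and the two cases yield equal contributions by Lemma~\ref{deg3_prime_long}(1). I split $r = r_q r'$ and $s = s_q s'$ with $r_q, s_q \mid q^\infty$ and $(r's', q) = 1$, and apply \eqref{factori} to bound $\vert S^v_{q,w_\ast}(N,N,c)\vert$ by the level-one Kloosterman sum $\vert S^v_{1,w_\ast}(N,N',d)\vert$ with $d = (r', r's')$ times the local ramified factors $\prod_{p \mid q} \# X^{(p)}_{q,w_\ast}(\overline{d}_p t_p)$. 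Lemma~\ref{deg3_prime_long}(3) controls the latter product by $q^{-1+\epsilon} r_q^2 s_q$, while the averaged Weil bound \eqref{Weil_on_av} (or Stevens' pointwise version) applied to the former yields $\vert S^v_{1,w_\ast}(N,N',d)\vert \ll (r')^{2+\epsilon}(s')^{1/2+\epsilon}$ under the coprimality $(m, r's') = 1$; non-coprime pairs are sparse and handled by the trivial estimate $\vert S^v_{1,w_\ast}\vert \ll (r')^2 s'$, at a cost of an $(mqX)^\epsilon$ factor. Combining gives $\vert S^v_{q,w_\ast}(N,N,c)\vert/(c_1 c_2) \ll q^{-7+\epsilon}(r')^\epsilon (s')^{-1/2+\epsilon}$, and summing over $r, s$ with $r \leq X/q^3$ and $rs \leq X/q^3$ produces the target bound.

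The main obstacle is the interplay between the Weil-cancellation hypothesis of Stevens' bound (requiring coprimality with $m$) and the full summation range. The cleanest route splits the $(r',s')$-sum into the generic coprime portion, handled by \eqref{Weil_on_av}, and the sparse non-coprime portion, handled by the trivial bound; the $m$-dependence is then absorbed into the $(mqX)^\epsilon$ factor. The constant term $\mathcal{N}_q$ in the target captures boundary effects at small moduli and the non-coprime contribution, while the $X/q^6$ term reflects the bulk behavior: combining the $q^{-7}$ savings per term with the $\asymp X/q^3$ volume of the $(r,s)$-summation gives a bound of order $X \cdot q^{-10 + o(1)}$, which is a fortiori dominated by $q + X/q^5 = \mathcal{N}_q(1 + X/q^6)$, confirming the claimed estimate.
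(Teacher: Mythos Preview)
Your treatment of $w_1,w_1'$ is fine and matches the paper's argument. The problem is the $w_l$ case.

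You invoke Lemma~\ref{lm:relevance} to restrict the moduli to the form $c=(q^3r,q^3rs)$, but that lemma is stated (and only valid) for $n\geq 4$, where $w_\ast=w_{1,n-2,1}$ is \emph{not} the long Weyl element and the compatibility condition \eqref{compat_coord} is nontrivial. For $n=3$ the element $w_\ast=w_l$ is the long Weyl element; the condition $w(i)+1=w(i+1)$ in \eqref{compat_coord} is never satisfied, so \emph{all} pairs $(c_1,c_2)$ are relevant. The only restriction available is the divisibility $q^3\mid c_1$, $q^3\mid c_2$ from Lemma~\ref{deg3_prime_long}(2); there is no relation $c_1\mid c_2$. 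Your entire $q^{-7}$ bookkeeping, and in particular the claim that the $(r,s)$-sum has volume $\asymp X/q^3$, rests on this nonexistent structure.

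If you redo the argument with independent $c_1,c_2$, combining Lemma~\ref{deg3_prime_long}(3) for the ramified part with Stevens' bound (or \eqref{Weil_on_av} with $t=1$) for the unramified part, you obtain only
\[
\mathcal{S}_{w_l,N}^{(3)}(X)\ll (mqX)^{\epsilon}\,q^{-1}\sum_{\substack{d_1,d_2\mid (mq)^\infty\\ q^3\mid d_1,d_2}}\frac{X}{(d_1d_2)^{1/2}}\ll (mqX)^{\epsilon}\,\frac{X}{q^4},
\]
which is worse than the target $\mathcal{N}_q(1+X/q^6)=q+X/q^5$ by a factor of $q$. The missing ingredient is precisely part~(2) of Lemma~\ref{deg3_prime_long}: when $3v_p(q)\leq \min(v_p(c_1),v_p(c_2))<4v_p(q)$, the Kloosterman set is empty unless $c_1\equiv c_2\pmod{p^{4v_p(q)}}$. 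The paper partitions the primes $p\mid q$ according to whether this borderline situation occurs, and for those primes feeds the resulting congruence $d_1c_1\equiv d_2c_2\pmod{q_{\mathcal{S}}^4}$ into \eqref{Weil_on_av}; the factor $(r,s,t)/t$ there recovers the missing power of $q$. Without exploiting this congruence you cannot reach the stated bound.
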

\begin{proof}
  We start by treating the case $w=w_1'$. The case $w=w_1$ is similar and we omit the details. We apply Lemma~\ref{lm:gl3_set_est_vor} and use trivial bounds along with the Chinese Remainder Theorem  to obtain
  \begin{equation*}
    \mathcal{S}_{w_l,N}^{(3)}(X) \ll q^{\epsilon} \sum_{\substack{q^2 \mid c_1 \leq X\\ q^4 \mid c_2 \leq X\\ c_2 = c_1^2/m}} \frac{\mathcal{N}_q}{q^2} \ll  \frac{\mathcal{N}_q}{q^{2-\epsilon}} \sum_{\substack{\gamma \leq \sqrt{mX}/q^2\\ m\mid \gamma^2}}1 \ll
    (m q)^{\epsilon}\frac{\mathcal{N}_q}{q^{2}}\frac{X^{1/2}}{q^2}. 
  \end{equation*}

  We turn towards $w=w_l=w_{\ast}$. Note that since here we are dealing with the long Weyl element, all moduli are relevant. If $d=(d_1,d_2)$ and $c=(c_1,c_2)$, then we let $\delta_d(c)$ be the function that is $1$ if $X_{q,w_l}^{(p)}((d_1c_1,d_2c_2))\neq\emptyset$ and $\delta_d(c)=0$ otherwise. Then, using Lemma~\ref{deg3_prime_long}, we can estimate
  \begin{equation}
    \frac{\vert S^v_{q,w_l}(N,N,(d_1c_1,d_2c_2))\vert}{d_1c_1d_2c_2} \ll \frac{\mathcal{N}_q}{q^{2-\epsilon}}\delta_d(c)\cdot \frac{\vert S^v_{1,w_l}(N,N',(c_1,c_2))\vert}{c_1c_2} \nonumber
  \end{equation}
  whenever $d_1,d_2\mid (mq)^{\infty}$ and $(c_1c_2,mq)=1$, for some suitable $N'$ depending possibly on $c_1, c_2, d_1, d_2$ with entries coprime to $c_1c_2$. Thus we can write our sum as
  \begin{equation}
    \mathcal{S}_{w_l,N}^{(3)}(X) \ll \max_{v \in V}\frac{\mathcal{N}_q}{q^{2-\epsilon}}\sum_{\substack{d_1,d_2\leq X\\ d_1, d_2\mid (mq)^{\infty}}} \sum_{\substack{c_1\leq X/d_1\\ c_2\leq X/d_2\\ (c_1c_2,mq)=1}} \delta_d(c) \frac{\vert S^v_{1,w_l}(N,N',(c_1,c_2))\vert}{c_1c_2}.\nonumber
  \end{equation}
  To unpack the condition $\delta_d(c)=1$, we write
  \begin{equation}
    d_{i} = d_{i,m}\cdot \prod_{p\mid q}d_{i,p} \nonumber
  \end{equation}
  for $i = 1, 2$, where $d_{i,m}\mid m^{\infty}$ and $d_{i,p}\mid p^{\infty}$. For a subset $\mathcal{S}\subseteq \{p\mid q\}$ we define $q_{\mathcal{S}} = \prod_{p\in \mathcal{S}}p^{v_p(q)}$ and $d_i^{\mathcal{S}} = \prod_{p\not\in \mathcal{S}}d_{i,p}$. We estimate
  \begin{equation}
    \mathcal{S}_{w_l,N}^{(3)}(X) \ll \max_{v \in V}\frac{\mathcal{N}_q}{q^{2-\epsilon}}\sum_{\mathcal{S}\subseteq \{p\mid q\}}\sum_{\substack{d_1,d_2\leq X\\ d_1,d_2\mid (mq)^{\infty}\\ 3v_p(q)\leq v_p(d_{1,p})<4v_p(q)\, \forall p\in \mathcal{S}\\ 4v_p(q)\leq v_p(d_{1,p})\forall p\not\in \mathcal{S}}} \sum_{\substack{c_1\leq X/d_1\\ c_2\leq X/d_2\\ (c_1c_2,mq)=1}} \delta_d(c) \frac{\vert S^v_{1,w_l}(N,N',(c_1,c_2))\vert}{c_1c_2}.\nonumber
  \end{equation}
  Lemma~\ref{deg3_prime_long} imposes the conditions
  \begin{equation*}
    d_{1, p} = d_{2, p}  \quad (p \in \mathcal{S}) \quad \text{and} \quad d_1c_2\equiv d_2c_2 \pmod{q_{\mathcal{S}}^4}.
  \end{equation*}
  Using the estimate \eqref{Weil_on_av} under these circumstances yields
  \begin{multline}
    \mathcal{S}_{w_l,N}^{(3)}(X) \ll \frac{\mathcal{N}_q}{q^{2-\epsilon}}X^{\epsilon}\sum_{\mathcal{S}\subseteq \{p\mid q\}}\sum_{\substack{d_1,d_2\leq X\\ d_1,d_2\mid (mq)^{\infty}\\ 3v_p(q)\leq v_p(d_{1,p})=v_p(d_{2,p})<4v_p(q)\, \forall p\in \mathcal{S}\\ v_p(d_{1,q}),v_p(d_{2,p})\geq 4v_p(q)\forall p\not\in \mathcal{S}}} \left[\frac{X}{q_{\mathcal{S}}^4(d_1^{\mathcal{S}}d_2^{\mathcal{S}})^{\frac{1}{2}}}+\frac{X^{\frac{1}{2}}}{\max(d_1,d_2)^{\frac{1}{2}}}\right]. \nonumber
  \end{multline}
 To estimate the second term, we simply use that  $q^3 \mid d_1, d_2$ to obtain
 $$\frac{\mathcal{N}_q}{q^{2-\epsilon}}X^{\epsilon}\sum_{\mathcal{S}\subseteq \{p\mid q\}}\sum_{\substack{d_1,d_2\leq X\\ q^3 \mid d_1,d_2\mid (mq)^{\infty} }}  \frac{X^{\frac{1}{2}}}{\max(d_1,d_2)^{\frac{1}{2}}} \ll \frac{\mathcal{N}_q}{q^{2 }} X^{1/2} q^{\epsilon-\frac{3}{2}}(mX)^{\epsilon}. $$
For the contribution of the first term we use that $(q/q_\mathcal{S})^4 \mid d_1, d_2$ and 
$$\frac{\mathcal{N}_q}{q^{2-\epsilon}}X^{\epsilon}\sum_{\mathcal{S}\subseteq \{p\mid q\}}\sum_{\substack{d_1,d_2\leq X\\ (q/q_{\mathcal{S}})^4 \mid d_1,d_2\mid (mq)^{\infty} }} \frac{X}{q_{\mathcal{S}}^4(d_1^{\mathcal{S}}d_2^{\mathcal{S}})^{\frac{1}{2}}} \ll \frac{\mathcal{N}_q}{q^{2}}X q^{\epsilon-4}(mX)^{\epsilon}.$$
  Combining these bounds completes the proof.
\end{proof}

We turn towards the higher rank cases. Here we only treat the case $w=w_{\ast}$ and we estimate everything trivially. In contrast to the case $n=3$, we exploit that admissibility of $(w_{\ast},c)$ gives strong constraints on the shape of the moduli $c_1,\ldots,c_{n-1}$.

\begin{prop} \label{pro_geo}
  Let $n\geq 4$ and $q\in \mathbb{N}$ be arbitrary and $N=(m,1,\ldots,1)$ for $m\in \mathbb{N}$. Then we have
  \begin{equation}
    \sum_{v\in V}\sum_{c_1, \ldots, c_{n-1} \leq X} 
    \frac{\vert S_{q,w_{\ast}}^v(N,N,c)\vert }{c_1\cdots c_{n-1}} \ll  (qmX)^{\epsilon} \mathcal{N}_q\cdot \frac{X^{\frac{3}{4}+\frac{1}{4n}}}{q^{\frac{7n-3}{4}}}\nonumber 
  \end{equation}
  for all $X>0$ and $\epsilon > 0$. 
\end{prop}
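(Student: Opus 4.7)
The strategy is to combine three inputs: the structural restriction on admissible moduli from Lemma~\ref{lm:relevance}, the factorization \eqref{factori}, and the pointwise bounds of Lemma~\ref{lem:kszeta} and \eqref{wast_nontriv}. By Lemma~\ref{lm:relevance}, I may restrict the sum to moduli $c=(q^n r, q^n rs,\ldots,q^n rs^{n-2})$; the reversed case is handled identically by symmetry (via the involution $\iota(g) = w_l g^{-\top} w_l^{-1}$ used elsewhere in the paper). Under this parameterization, $c_1\cdots c_{n-1} = q^{n(n-1)} r^{n-1} s^{(n-1)(n-2)/2}$, and the constraint $c_i\leq X$ reduces to $q^n r s^{n-2}\leq X$.

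Decompose $r=r_q r^*$ and $s=s_q s^*$ into $q$-parts and coprime-to-$q$ parts. The factorization \eqref{factori} gives $|S^v_{q,w_\ast}(N,N,c)|\leq |S^v_{1,w_\ast}(N,N',d)|\prod_{p\mid q}\#X^{(p)}_{q,w_\ast}(\bar d_p t_p)$ with $d=(r^*, r^* s^*,\ldots,r^*(s^*)^{n-2})$. For each $p\mid q$ the local modulus is of the form $c(v_p(r), v_p(s); s_0)$ in the notation of Lemma~\ref{lem:kszeta}, which supplies both the support constraint $v_p(r)\geq n v_p(q)$ (equivalently $q^n\mid r_q$) and the bound
\[
\prod_{p\mid q}\#X^{(p)}_{q,w_\ast}(\bar d_p t_p) \ll \frac{\mathcal{N}_q}{q^{n-1}} r_q^{n-1+\epsilon} s_q^{(n-1)(n-2)/2}.
\]
When $(m, r^* s^*)=1$, the entries of $N'=(m',1,\ldots,1)$ inherit coprimality with the entries of $d$, and \eqref{wast_nontriv} yields $|S^v_{1,w_\ast}(N,N',d)|\ll ((r^*)^{n-1}(s^*)^{(n-1)(n-2)/2})^{1-1/(4n)+\epsilon}$. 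In the exceptional case $(m,r^*s^*)>1$ one isolates the common divisor and applies the trivial bound $|S^v_{1,w_\ast}(N,N',d)|\leq d_1\cdots d_{n-1}$ there; the restriction $p\mid r^*s^*$ for some $p\mid m$ reduces the count of admissible $(r^*,s^*)$ by a factor $\sum_{p\mid m}1/p\ll\log\log m$, and the resulting bound is comfortably dominated by the main term once one uses the effective range of $X$ in the application.

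Dividing by $c_1\cdots c_{n-1}$ and collecting,
\[
\frac{|S^v_{q,w_\ast}(N,N,c)|}{c_1\cdots c_{n-1}} \ll \frac{\mathcal{N}_q}{q^{n^2-1}}\cdot \frac{(r_q s_q)^{\epsilon}}{(r^*)^{(n-1)/(4n)}(s^*)^{(n-1)(n-2)/(8n)}}.
\]
Writing $r_q=q^n R$ with $R\mid q^{\infty}$, the constraint becomes $R r^*(s^*)^{n-2}\leq X/(q^{2n} s_q^{n-2})$. A routine two-variable Dirichlet estimate bounds $\sum_{r^*(s^*)^{n-2}\leq Y}(r^*)^{-(n-1)/(4n)}(s^*)^{-(n-1)(n-2)/(8n)}$ by $\ll Y^{(3n+1)/(4n)+\epsilon}$: the $r^*$-sum contributes $\ll Y^{(3n+1)/(4n)}/(s^*)^{(n-2)(3n+1)/(4n)}$, and the subsequent $s^*$-sum converges precisely because the iterated exponent $(n-2)(7n+1)/(8n)$ exceeds $1$ for $n\geq 4$. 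The outer sums over $R, s_q\mid q^{\infty}$ converge uniformly in $q$ to $\ll q^{\epsilon}$ since the effective exponents $(3n+1)/(4n)$ and $(n-2)(3n+1)/(4n)$ are strictly positive. Assembling gives
\[
\mathcal{S}^{(n)}_{w_\ast,N}(X) \ll (qmX)^{\epsilon}\mathcal{N}_q\cdot \frac{X^{3/4+1/(4n)}}{q^{n^2-1+(3n+1)/2}},
\]
which implies the stated bound, since $n^2-1+(3n+1)/2>(7n-3)/4$ for all $n\geq 4$.

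The main obstacle is cleanly handling the coprimality condition between $m$ and the unramified modulus $d$ so that \eqref{wast_nontriv} can be deployed within the $m^{\epsilon}$ budget of the proposition; a secondary technical point is verifying the uniform-in-$q$ convergence of the tail sums over $q^{\infty}$-divisors, which holds precisely because the relevant Dirichlet exponents satisfy the inequalities above in the range $n\geq 4$.
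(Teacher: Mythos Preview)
Your overall strategy matches the paper's: restrict moduli via Lemma~\ref{lm:relevance}, factor via \eqref{factori}, bound the ramified local sets by Lemma~\ref{lem:kszeta}, and bound the unramified Kloosterman sum by \eqref{wast_nontriv}. However, there are two genuine errors in the execution.

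\textbf{Double-counting the $q^n$ divisibility.} Having written $c=(q^nr,q^nrs,\ldots)$, the local modulus $\bar d_p t_p$ for $p\mid q$ has first entry $(q^nr,p^\infty)$, so in the notation of Lemma~\ref{lem:kszeta} one has $\alpha=n\,v_p(q)+v_p(r)$, not $\alpha=v_p(r)$. Consequently the product $\prod_{p\mid q}\#X^{(p)}_{q,w_\ast}$ carries an extra factor $q^{n(n-1)}$ that cancels exactly against the $q^{n(n-1)}$ in $c_1\cdots c_{n-1}$; your prefactor should be $\mathcal{N}_q/q^{n-1}$, not $\mathcal{N}_q/q^{n^2-1}$. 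The same mis-identification leads you to impose the spurious constraint $q^n\mid r_q$ (Lemma~\ref{lem:kszeta}'s condition $\alpha\ge n\,v_p(q)$ is automatic once the $q^n$ is already extracted). With correct bookkeeping the final $q$-exponent is exactly $(7n-3)/4$, not the stronger $n^2-1+(3n+1)/2$ you claim; your ``stronger'' bound is an artifact of this double-counting.

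\textbf{The case $(m,r^*s^*)>1$.} Your proposed fix --- trivially bounding $|S^v_{1,w_\ast}(N,N',d)|\le d_1\cdots d_{n-1}$ on that set and invoking a density factor $\sum_{p\mid m}1/p$ --- does not prove the proposition as stated for all $X>0$; it loses $X^{1/4-1/(4n)}$ relative to the main term, and a $\log\log m$ saving cannot compensate. Appealing to ``the effective range of $X$ in the application'' is not a proof of the proposition. The paper's remedy is simple and works uniformly: split $r,s$ according to $(qm)^\infty$ rather than $q^\infty$, so that the inner unramified sum runs over $(r_2s_2,qm)=1$ and \eqref{wast_nontriv} always applies; the $m$-part of the level-$1$ Kloosterman sum is then bounded trivially by its Kloosterman set (contributing a ratio $\le 1$), and the resulting tail sums over $r_1,s_1\mid (qm)^\infty$ are $\ll (qmX)^\epsilon$ by Rankin's trick.
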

\begin{proof}
  We start by applying Lemma~\ref{lm:relevance} to see that $S_{q,w_{\ast}}^{v}(N,N,c)$ vanishes unless
  \begin{equation}
    c= (r, rs,\ldots,rs^{n-2}) \text{ or }c=(rs^{n-2},\ldots,rs,r), \nonumber
  \end{equation}
  with $s,r\in \mathbb{N}$ and $q^n\mid r$. We will focus on moduli of the first shape. The argument for those of the second one is similar and we omit the details. Next we factor the Kloosterman sums using \eqref{factori}. This yields
  \begin{multline}
    \sum_{\substack{c\in \mathbb{N}^{n-1},\\ c_j\leq X}} \frac{\vert S_{q,w_{\ast}}^v(N,N,c)\vert }{c_1\cdots c_{n-1}}  \ll   \sum_{\substack{r_1s_1^{n-2}\leq X\\ r_1,s_1\mid (qm)^{\infty}}} \prod_{p\mid q} \frac{\sup_{\overline{s}\in \mathbb{Z}_p^{\times}} \# X_{q,w_{\ast}}^{(p)}(d_p(\overline{s},s_1,r_1))}{(r_1,p^{\infty})^{n-1}(s_1,p^{\infty})^{(n-1)(n-2)/2} }\\ \cdot \sum_{\substack{r_2s_2^{n-2}\leq X/(r_1s_1^{n-2})\\ (r_2s_2,qm)=1}}\frac{\vert S_{1,w_{\ast}}^v(N,N',(r_2,r_2s_2,\ldots,r_2s_2^{n-2}))\vert }{r_2^{n-1}s_2^{ (n-1)(n-2)/2}},\nonumber
  \end{multline}
  for $d_p(\overline{s},s_1,r_1)=(p^{v_p(r_1)},\overline{s}p^{v_p(r_1)+v_p(s_1)},\cdots,\overline{s}^{n-2}p^{v_p(r_1)+(n-2)v_p(s_1)})$ and some suitable $N'$ (depending on the various moduli). 

  The unramified sum can be estimated using \eqref{wast_nontriv}.  We obtain
  \begin{multline}
    \sum_{\substack{r_2s_2^{n-2}\leq  X/(r_1s_1^{n-2})\\ (r_2s_2,qm)=1}}\frac{\vert S_{q,w_{\ast}}^v(N,N',(r_2,r_2s_2,\ldots,r_2s_2^{n-2}))\vert }{r_2^{n-1}s_2^{(n-1)(n-2)/2}} \\ \ll  \sum_{\substack{r_2s_2^{n-2}\leq  X/(r_1s_1^{n-2})\\ (r_2s_2,qm)=1}} r_2^{-\frac{(n-1)}{4n}}s_2^{-\frac{(n-1)(n-2)}{8n}}\ll X^{\epsilon}\Big(\frac{X}{r_1s_1^{n-2}}\Big)^{\frac{3}{4}+\frac{1}{4n}}.\nonumber
  \end{multline}
  So far, we have seen that
  \begin{equation}
    \sum_{c_1, \ldots, c_{n-1} \leq X} \frac{\vert S_{q,w_{\ast}}^v(N,N,c)\vert }{c_1\cdots c_{n-1}}\ll X^{\frac{3}{4}+\frac{1}{4n}+\epsilon} \sum_{\substack{r_1s_1^{n-2}\leq X\\ r_1,s_1\mid (mq)^{\infty}}} \prod_{p\mid q} \frac{\sup_{\overline{s}\in \mathbb{Z}_p^{\times}} \# X_{q,w_{\ast}}^{(p)}(d_p(\overline{s},s_1,r_1))}{(r_1,p^{\infty})^{\alpha}(s_1,p^{\infty})^{\beta}},\nonumber
  \end{equation}
  for $\alpha=n-1+\frac{3}{4}+\frac{1}{4n}$ and $\beta=\frac{(n-1)(n-2)}{2}+(n-2)(\frac{3}{4}+\frac{1}{4n}).$ At this point, we insert the results from Lemma~\ref{lem:kszeta} to obtain
  \begin{multline}
    \sum_{c_1, \ldots, c_{n-1} \leq X} \frac{\vert S_{q,w_{\ast}}^v(N,N,c)\vert }{c_1\cdots c_{n-1}} \ll (qmX)^{\epsilon}X^{\frac{3}{4}+\frac{1}{4n}}\frac{\mathcal{N}_q}{q^{n-1}}\sum_{\substack{r_1\leq X\\ q^n\mid r_1\mid (mq)^{\infty}}} (r_1,q^{\infty})^{-\frac{3}{4}-\frac{1}{4n}}  \\
    \cdot \sum_{\substack{s_1^{n-2}\leq X\\ s_1\mid (mq)^{\infty}}} (s_1,q^{\infty})^{-(n-2)(\frac{3}{4}+\frac{1}{4n})}.\nonumber
  \end{multline}
  Note that the $s_1$-sum is absolutely convergent and can be estimated trivially.  On the other hand, we have
  \begin{equation}
    \sum_{\substack{r_1\leq X\\ q^n\mid r_1\mid (mq)^{\infty}}} (r_1,q^{\infty})^{-\frac{3}{4}-\frac{1}{4n}} \ll  (mqX)^{\epsilon}q^{-n(\frac{3}{4}+\frac{1}{4n})},\nonumber
  \end{equation}
  by a standard application of Rankin's trick. The result follows directly.
\end{proof}

\section{The Jacquet--Langlands transfer}\label{sec:cnfm30d6ii}

In this section, we will establish the main ingredient for the proof of Theorem~\ref{th_cocomp}. Recall that $B$ is a division algebra over $\Q$ of degree $n\geq 3$ which splits over $\R$. Furthermore, we have an embedding $\phi\colon B\to \mathrm{Mat}_{n}(\R)$ such that $\phi(B)$ is a $\Q$-form of $\mathrm{Mat}_{n}(\R)$. Let $\mathcal{O}\subset B$ be an order and let $q$ be coprime to $\mathrm{discr}(\mathcal{O})$. Then we are considering the congruence lattices
\begin{equation*}
  \Gamma_{\mathcal{O}}(q) = \phi(\mathcal{O}^1)\cap \Gamma(q).
\end{equation*}
In this setting, we will prove the following:

\begin{prop}\label{main_ing_coc}
  There is an integer $D_{\mathcal{O}}\mid \mathrm{discr}(\mathcal{O})^{\infty}$ that only depends on $\mathcal{O}\subseteq B$ such that 
  \begin{equation}
    N_v(\sigma,\mathcal{F}_{\Gamma_{\mathcal{O}}(q)}(M)) \leq N_v(\sigma,\mathcal{F}_{\Gamma(qD_{\mathcal{O}}),\mathrm{disc}}(M))\nonumber
  \end{equation}
  for any place $v$ where $\Gamma_{\mathcal{O}}(q)$ is unramified.
\end{prop}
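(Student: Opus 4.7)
The plan is to inject each Hecke--Maass form counted on the left into the discrete spectrum of $\Gamma(qD_{\mathcal{O}})$ via the global Jacquet--Langlands correspondence, and then to verify that both the archimedean spectral parameter and the Langlands parameter at the split place $v$ are preserved, so that the statistic $N_v(\sigma,\cdot)$ transfers to an inequality. The only real work lies in controlling the local levels at the primes dividing $\mathrm{discr}(\mathcal{O})$.

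First I would adelize the setup. I would identify $L^2(\Gamma_{\mathcal{O}}(q)\backslash \mathrm{SL}_n(\R)/\mathrm{SO}_n)$ with an adelic $L^2$-space for the unit group of $B$ modulo center, using a level subgroup $K^{B}(q) = \prod_p K^{B}_p(q) \subseteq B^{\times}(\Af)$ satisfying $K^{B}_p(q) = \mathrm{GL}_n(\Z_p)$ for $p\nmid q\,\mathrm{discr}(\mathcal{O})$, $K^{B}_p(q) = K_p(p^{v_p(q)})$ for $p\mid q$ (here $B_p$ splits since $(q,\mathrm{discr}(\mathcal{O}))=1$), and $K^{B}_p(q) = \widehat{\mathcal{O}}_p^{\times}$ for $p\mid \mathrm{discr}(\mathcal{O})$. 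Each $\varpi\in \mathcal{F}_{\Gamma_{\mathcal{O}}(q)}(M)$ then arises from a vector in a cuspidal automorphic representation $\pi^{B}$ of $B^{\times}(\A)$, with multiplicity one in the discrete spectrum of $B^{\times}$.

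Next I would apply the global Jacquet--Langlands transfer (Badulescu, following Arthur--Clozel) to obtain, for each such $\pi^{B}$, an injection $\pi^{B}\mapsto \pi := \jl(\pi^{B})$ landing in the discrete automorphic spectrum of $\mathrm{GL}_n(\A)$; this map is injective with multiplicity one on both sides. Since $B$ splits at $v$ and at $\infty$, the local JL correspondence is the identity at those places, so $\pi^{B}_v\cong \pi_v$ (matching Langlands parameters at $v$, whence $\sigma_{\pi^{B},v}=\sigma_{\pi,v}$) and $\pi^{B}_{\infty}\cong \pi_{\infty}$ (matching archimedean spectral parameters). For the primes $p\nmid q\,\mathrm{discr}(\mathcal{O})$ the same argument gives equality of local fixed-vector dimensions, and for the split $p\mid q$ the factors on both sides coincide tautologically (as $K^{B}_p(q)$ corresponds to $K_p(p^{v_p(q)})$).

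The decisive step is at $p\mid \mathrm{discr}(\mathcal{O})$: I would show that there exists $d_p\in \N$, depending only on $\mathcal{O}_p$, such that for every $\pi^{B}_p$ with $(\pi^{B}_p)^{K^{B}_p(q)}\neq 0$ one has
\begin{equation*}
  \dim \pi_p^{K_p(p^{d_p})} \;\geq\; \dim (\pi^{B}_p)^{K^{B}_p(q)}.
\end{equation*}
This is a finiteness statement about a single Bernstein component of $B_p^{\times}$: the representations with a $\widehat{\mathcal{O}}_p^{\times}$-fixed vector form finitely many Bernstein blocks, local JL maps each block into a specific block on the $\mathrm{GL}_n(\Q_p)$ side, and Bushnell--Kutzko theory bounds the conductor of members of the image block in terms of the block alone, not of the individual representation. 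The required dimensional inequality then follows by choosing $d_p$ large enough that the $K_p(p^{d_p})$-fixed subspaces of all representations in the image block dominate the source dimensions (which are themselves bounded on the block). Setting $D_{\mathcal{O}} := \prod_{p\mid \mathrm{discr}(\mathcal{O})} p^{d_p}$ and multiplying local inequalities produces the global bound.

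The main obstacle will be the last uniformity claim: ensuring $d_p$ really depends only on $\mathcal{O}_p$, and not on the individual $\pi^{B}_p$ or on $q$. The cleanest way I see to secure this is to invoke the finiteness of the relevant Bernstein component and the compatibility of local JL with types, rather than trying to chase the correspondence representation by representation; the simplest cases ($B_p$ a division algebra, so $B_p^{\times}/Z$ compact modulo a finitely generated free quotient, making all fixed-vector spaces finite-dimensional and admissibility automatic) serve as sanity checks but are not by themselves adequate, because $B_p$ may be a matrix algebra over a proper division algebra when $p\mid \mathrm{discr}(\mathcal{O})\setminus \Ram(B)$.
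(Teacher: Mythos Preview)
Your outline is the same global strategy as the paper's: adelize, apply the global Jacquet--Langlands correspondence (Badulescu), note that at the split places $v$ and $\infty$ the local correspondence is the identity so spectral parameters and $\sigma_{\cdot,v}$ match, and reduce everything to a uniform local bound at the primes $p\mid \mathrm{discr}(\mathcal{O})$.

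The only real difference is how the local exponent $d_p$ is pinned down. You argue via Bernstein-block finiteness and type theory that the image representations have bounded conductor; the paper is more concrete. It first uses Bernstein's uniform admissibility to get $\dim(\pi^B_p)^{\widehat{\mathcal{O}}_p^\times}\ll_{\mathcal{O}}1$, then observes (via \cite{BH}) that up to unramified twist there are only finitely many $\pi^B_p$ with a $\widehat{\mathcal{O}}_p^\times$-fixed vector, and finally invokes the depth-preservation theorem of Aubert--Baum--Plymen--Solleveld together with \cite{MY} to set $d_p$ equal to the maximal depth of these finitely many transfers. This sidesteps the issue you flag at the end: one never needs to say that ``local JL maps blocks to blocks'' (which is awkward, since local JL is only defined on essentially square-integrable representations), only that it maps a fixed finite list of representations to a fixed finite list. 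Two further small points: the paper separates off the one-dimensional automorphic representations of $B^\times(\A)$ before applying JL (they contribute $\ll_{\mathcal{O}}1$), and in fact the paper only proves the inequality up to a constant $\ll_{\mathcal{O}}$ rather than the literal $\leq$, which is all that is used downstream.
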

\begin{proof}
  Proving this will amount to an application of the Jacquet--Langlands correspondence.  It will be convenient to work adelically using the language of automorphic representations.  Recall that a cuspidal automorphic representation $\pi$ of $\mathrm{GL}_n(\A)$ has a factorization $\pi=\otimes_v'\pi_v$. We write $\pi^{K(q)}$ for the space of $K(q)$-invariant vectors in $\pi$. At each place $v$ where $\pi_v$ is unramified, in particular at all the places away from $q$, we associate a Langlands parameter $\mu_{\pi_v}$. Following the discussion in \cite[Section~2.6.3 and Remark~6.1]{AB}, we write
  \begin{equation}
    N_v(\sigma,\mathcal{F}_{\Gamma(q'),\mathrm{cusp}}(M)) = \frac{1}{\varphi(q')}\sum_{\substack{\pi \text{ cusp. aut. rep of }\mathrm{GL}_n(\A)\\ \Vert \mu_{\pi_{\infty}}\Vert\leq M,  \sigma_{\pi_v}\geq \sigma}} \dim_{\mathbb{C}}\pi^{K(q')}, \nonumber
  \end{equation}
  for $q' \in \mathbb{N}$.  Note that we have used the fact that $\dim_{\mathbb{C}}\pi^{K(q')}\neq 0$ if and only if $\pi\mid L^2_{\mathrm{cusp}}(X(q'))$.

  Similarly, we can adelize $B(\A) = \otimes_v' B_v$, where $B_v=B\otimes \Q_v$.  We use $\phi\colon B\to \mathrm{Mat}_{n}(\R)$ to identify $B_{\infty}$ with $\mathrm{Mat}_{n}(\R)$. Set $K_{\mathcal{O},\infty}(q) = \phi^{-1}(\mathrm{SO}_n(\R))$ and $K_{\mathcal{O},p}(q) = \phi^{-1}(\Gamma_{\mathcal{O}}(q))\otimes \Z_p$. In this way, we obtain a compact subgroup $K_{\mathcal{O}}(q)= \otimes'_v K_{\mathcal{O},v}(q)\subseteq B(\A)^{\times}$. In this setting, the usual adelization procedure yields
  \begin{equation}
    N_v(\sigma,\mathcal{F}_{\Gamma_{\mathcal{O}}(q)}(M)) \ll_{\mathcal{O}} \frac{1}{\varphi(q)}\sum_{\substack{\sigma \text{ aut. rep of }B^{\times}(\A)\\ \Vert \mu_{\sigma_{\infty}}\Vert\leq M, \sigma_{\sigma_v}\geq \sigma}} \dim_{\mathbb{C}}\sigma^{K_{\mathcal{O}}(q)}. \nonumber
  \end{equation}

  According to \cite[Theorem~A.7]{Ba}, the only residual automorphic representations of $B^{\times}(\A)$ are the one dimensional ones (i.e.\ characters). On the other hand, the global Jacquet--Langlands correspondence gives us an injective map $\sigma \mapsto \mathrm{JL}(\sigma)$ from cuspidal automorphic representations of $B^{\times}(\A)$ to cuspidal automorphic representations of $\mathrm{GL}_n(\A)$. See for example \cite[Theorem~5.1]{Ba} or \cite[Theorem~2.5]{Fl} for a precise statement. In particular, by (strong) multiplicity one we directly obtain the inequalities
  \begin{align}
    N_v(\sigma,\mathcal{F}_{\Gamma_{\mathcal{O}}(q)}(M)) &\ll_{\mathcal{O}}1+ \frac{1}{\varphi(q)}\sum_{\substack{\sigma \text{ cusp. aut. rep of }B^{\times}(\A)\\ \Vert \mu_{\sigma_{\infty}}\Vert\leq M, \,  \sigma_{\sigma_v}\geq \sigma}} \dim_{\mathbb{C}}\sigma^{K_{\mathcal{O}}(q)} \nonumber\\
                                               &= 1+\frac{1}{\varphi(q)}\sum_{\substack{\pi \text{ cusp. aut. rep of }\mathrm{GL}_n(\A)\\ \Vert \mu_{\pi_{\infty}}\Vert\leq M, \,  \sigma_{\pi_v}\geq \sigma\\ \pi=\mathrm{JL}(\sigma)}} \dim_{\mathbb{C}}\sigma^{K_{\mathcal{O}}(q)} .\nonumber
  \end{align} 
  Note that, in order to relate the Langlands parameters at $v$ and $\infty$ we have used that the global Jacquet Langlands correspondence is compatible with the local one at $\infty$ and $v$. We will discuss this in more detail below. 

  To complete the proof it remains to choose $q'=qD_{\mathcal{O}}$ such that
  \begin{equation*}
    \dim_{\mathbb{C}}\sigma^{K_{\mathcal{O}}(q)} \ll_{\mathcal{O}} \dim_{\mathbb{C}}\mathrm{JL}(\sigma)^{K(q')} \label{dim_comp}
  \end{equation*}
  for all cuspidal automorphic representations $\sigma$ of $B^{\times}(\A)$. Indeed in this case we simply have
  \begin{align}
    N_v(\sigma,\mathcal{F}_{\Gamma_{\mathcal{O}}(q)}(M)) &\ll_{\mathcal{O}}1+\frac{1}{\varphi(q')}\sum_{\substack{\pi \text{ cusp. aut. rep of }\mathrm{GL}_n(\A)\\ \Vert \mu_{\pi_{\infty}}\Vert\leq M, \, \sigma_{\pi_v}\geq \sigma\\ \pi=\mathrm{JL}(\sigma)}} \dim_{\mathbb{C}}\pi^{K(q')} \nonumber\\
                                               &\leq 1 +\frac{1}{\varphi(q')}\sum_{\substack{\pi \text{ cusp. aut. rep of }\mathrm{GL}_n(\A)\\ \Vert \mu_{\pi_{\infty}}\Vert\leq M, \, \sigma_{\pi_v}\geq \sigma}} \dim_{\mathbb{C}}\pi^{K(q')} \nonumber\\
                                               &\leq 1+ N_v(\sigma,\mathcal{F}_{\Gamma(q'),\mathrm{cusp}}(M)) \leq N_v(\sigma,\mathcal{F}_{\Gamma(q'),\mathrm{disc}}(M)).\nonumber
  \end{align}

  Choosing $q'$ is a purely local problem. To see this, we recall that the global Jacquet--Langlands correspondence is compatible with local one. More precisely, if $\pi=\otimes_v' \pi_v$ and $\sigma=\otimes_v'\sigma_v$ and $\pi=\mathrm{JL}(\sigma)$, then $\pi_v\cong \sigma_v$ for all places $v\not\in S$ and $\pi_v=\mathrm{JL}_v(\sigma_v)$ for all places $v\in S$. Here, $S$ is a finite set of places, which we can take to divide $\mathrm{discr}(\mathcal{O})$, while $\mathrm{JL}_v$ denotes the local Jacquet--Langlands correspondence. It is clear that \eqref{dim_comp} would follow from the corresponding local estimates
  \begin{equation*}
    \dim_{\mathbb{C}}\sigma_v^{K_{\mathcal{O},v}(q)} \ll_{\mathcal{O}} \dim_{\mathbb{C}}\pi_v^{K_v(q')}
  \end{equation*}
  where $\pi=\otimes_v'\pi_v = \mathrm{JL}(\sigma)$. If $v\not\in S$, then $\pi_v\cong \sigma_v$ and $K_v(qD_{\mathcal{O}}) = K_v(q) \cong K_{\mathcal{O},v}(q)$. Thus we have
  \begin{equation}
    \dim_{\mathbb{C}}\sigma_v^{K_{\mathcal{O},v}(q)} = \dim_{\mathbb{C}}\pi_v^{K_v(q')} \text{ for all }v\not\in S.\nonumber
  \end{equation}
  Finally, we turn to the place $v\in S$ (i.e. $v=p$, $(p,q)=1$ and $p\mid\mathrm{discr}(\mathcal{O})$). Note that by Bernstein's uniform admissibility \cite{Be}, we have
  \begin{equation}
    \dim_{\mathbb{C}}\sigma_v^{K_{\mathcal{O},v}(q)} \ll_{\mathcal{O}} 1.\nonumber 
  \end{equation}
  On the other hand, up to unramified twist, there are only finitely many irreducible admissible unitary representations $\sigma_v$ of $B^{\times}_v$ such that $\sigma^{K_{\mathcal{O},v}(q)}\neq \{0\}$. This follows easily from \cite[Lemma~1]{BH}, for example.  Let $\{\sigma_{v,1}\ldots,\sigma_{v,L}\}$ be the set of all these representations (up to unramified twist) and note that $L=L(\mathcal{O})$ is independent of $q$. Therefore it suffices to find $d_{\mathcal{O},v}$ such that 
  \begin{equation}
    \mathrm{JL}_v(\sigma_{v,i})^{K_v(q')} \neq \{0\} \nonumber
  \end{equation}
  for all $i=1,\ldots,L$. Here we recall that $q'=qD_{\mathcal{O}}$, and we put $D_{\mathcal{O}} = \prod_{p\mid \mathrm{disc}(\mathcal{O})}p^{d_{\mathcal{O},p}}$. The exponents $d_{\mathcal{O},v}$ are easily constructed. To be concrete, we can argue as follows. Let $t=\max_i\rho(\sigma_{v,i})$ be the maximum depth of the representations $\sigma_{v,1},\ldots,\sigma_{v,L}$. By \cite[Corollary~2.8]{ABPS}, the local Langlands correspondence preserves depth, so that $t=\max_i\rho(\mathrm{JL}_v(\sigma_{v,i})).$  An application of \cite[Proposition~2.1]{MY} shows that $\mathrm{JL}_v(\sigma_{v,i})^{K_v(p^{t-1})}\neq \{0\}$. Thus we can put $d_{\mathcal{O},v}=t-1$. By our discussion above, this only depends on $\mathcal{O}$, but not on $q$.
\end{proof}

\section{The endgame}\label{sec:cnfm30d8bc}

We first prove Theorem~\ref{th_mt}. Arguing as in \cite[Section~5]{AB} we obtain
\begin{equation}
  \int_{\substack{\Gamma(q)^{\natural}\\ \| \mu_{\varpi} \| \leq M}}\vert A_{\varpi}(N)\vert^2Z^{2\sigma_{\varpi,v}}d\varpi \ll (qZ)^{\epsilon} \sum_{w\in W}\mathcal{S}_{w,N}^{(n)}(Z')\nonumber
\end{equation}
for some $Z' \ll Z$.  As in \cite{AB}, the notation on the left hand side is shorthand for a spectral decomposition of $L^2(\Gamma^{\natural}\backslash {\rm SL}_n(\mathbb{R})/{\rm SO}_n(\mathbb{R}))$ respecting the unramified Hecke algebra, restricted to $\| \mu_{\varpi} \| \leq M$. The $O$-constant on the right hand side is polynomial in $M$.  The geometric sums $\mathcal{S}_{w,N}^{(n)}(X)$ were defined in \eqref{eq:def_S}. At this point, we drop the continuous contribution from the spectral side and apply Proposition~\ref{pro_four_coeff}. This gives
\begin{equation}
  \sum_{\substack{\varpi  \in {\rm ONB}(L^2_{{\rm cusp}}(X(q)))\\ \| \mu_{\varpi} \| \leq M}}Z^{2\sigma_{\varpi,v}} \ll  (qZ)^{\epsilon} \frac{\mathcal{V}_q}{\mathcal{N}_q} \sum_{w\in W}\mathcal{S}_{w,N}^{(n)}(Z').\label{eq_start_gl3_here}
\end{equation}
Note that when applying Proposition~\ref{pro_four_coeff}, we potentially have to replace $q$ by $q_0q$ for some $q_0$ depending only on $n$. Since this shift can always be absorbed into the implicit constants, we omit it from the notation.

From now on, we suppose that $Z\ll q^{n+2-\epsilon}$. Then, by Lemma~\ref{lm:divisibility} we obtain
\begin{equation}
  \sum_{\substack{\varpi  \in {\rm ONB}(L^2_{{\rm cusp}}(X(q)))\\ \| \mu_{\varpi} \| \leq M}}Z^{2\sigma_{\varpi,v}} \ll (qZ)^{\epsilon} \frac{\mathcal{V}_q}{\mathcal{N}_q}\left(\mathcal{S}_{\mathrm{id},N}^{(n)}(Z')+\mathcal{S}_{w_{\ast},N}^{(n)}(Z')\right).\nonumber
\end{equation}
The contribution of $w=\mathrm{id}$ is simply $\mathcal{N}_q$. On the other hand, we have estimated $\mathcal{S}_{w_{\ast},N}^{(n)}(x)$ in Proposition~\ref{pro_geo}. Since $Z\ll q^{n+2-\epsilon}$ the bound obtained there reads
\begin{equation}
  \mathcal{S}_{w_{\ast},N}^{(n)}(Z) \ll q^{\epsilon} \mathcal{N}_q\left(1+\frac{q^{(n+2)(\frac{3}{4}+\frac{1}{4n})}}{q^{(7n-3)/4}}\right) \ll q^{\epsilon} \mathcal{N}_q. \nonumber
\end{equation}
This yields
\begin{equation}
  \sum_{\substack{\varpi  \in {\rm ONB}(L^2_{{\rm cusp}}(X(q)))\\ \| \mu_{\varpi} \| \leq M}}Z^{2\sigma_{\varpi,v}} \ll  (qZ)^{\epsilon} \mathcal{V}_q, \nonumber
\end{equation}
as long as $Z\ll q^{n+2-\epsilon}$. The statement of Theorem ~\ref{th_mt} follows from this via the observation
\begin{equation}
  N_v(\sigma,\mathcal{F}_{\mathrm{cusp}}) \leq Z^{-2\sigma}\sum_{\substack{\varpi  \in {\rm ONB}(L^2_{{\rm cusp}}(X(q)))\\ \| \mu_{\varpi} \| \leq M}}Z^{2\sigma_{\varpi,v}}\ll  (qZ)^{\epsilon} \mathcal{V}_qZ^{-2\sigma}.\nonumber
\end{equation}
The choice $Z\asymp q^{n+2-\epsilon}$ gives the desired result. (Compare \cite[Section~6.4]{AB}, for example.)\footnote{Note that the bottleneck lies not in treatment of $S_{w_{\ast},N}^{(n)}(Z)$, but rather in that, as soon as we allow $Z\geq q^{n+2}$, other Weyl elements will contribute, and we do not know at the moment how to handle their contribution.}

The proofs of Corollary~\ref{cor_disc} and Corollary~\ref{cor_full_spec} are obtained by following the arguments from \cite[Section~7]{AB} verbatim.\footnote{Note that in \cite[Section~7]{AB}, only trivial estimates for the dimensions of $\pi_p^{K_p(p^m)}$ are used. Sharp estimates are now available in \cite{Su}. These can in principle be used to give more precise bounds on the residual spectrum. We have chosen not pursue this here, since the trivial estimates are sufficient for the density hypothesis.} We omit the details.  Corollaries \ref{cor:count} and \ref{cor:lift} follow as in \cite[Section 8]{AB} and \cite{JK}. Note that the only point in \cite{JK} where the squarefreeness is needed is when the density theorem from \cite{AB} is applied.

Let us turn towards Theorem~\ref{th_gl3}. To prove this, we start from \eqref{eq_start_gl3_here}. Note that as usual, the contribution of $w=\mathrm{id}$ is clearly acceptable.  We are thus left with
\begin{equation}
  \sum_{\substack{\varpi  \in {\rm ONB}(L^2_{{\rm cusp}}(X(q)))\\ \| \mu_{\varpi} \| \leq M}}Z^{2\sigma_{\varpi,v}} \ll (qZ)^{\epsilon}\mathcal{V}_q +  \frac{\mathcal{V}_q}{\mathcal{N}_q}\mathcal{S}_{w_1',N}^{(n)}(Z')+\frac{\mathcal{V}_q}{\mathcal{N}_q}\mathcal{S}_{w_1,N}^{(n)}(Z')+\frac{\mathcal{V}_q}{\mathcal{N}_q}\mathcal{S}_{w_l,N}^{(n)}(Z').\nonumber
\end{equation}
Inserting the estimates from Lemma~\ref{gl_3_geo} yields
\begin{equation}
  \sum_{\substack{\varpi  \in {\rm ONB}(L^2_{{\rm cusp}}(X(q)))\\ \| \mu_{\varpi} \| \leq M}}Z^{2\sigma_{\varpi,v}} \ll (qZ)^{\epsilon}\mathcal{V}_q\left(1+ \frac{Z}{q^6} + \frac{Z^{1/2}}{q^4}\right).\nonumber
\end{equation}
This estimate holds without restriction on $Z$, but for $Z\ll q^{6+\epsilon}$, we have 
\begin{equation}
  \sum_{\substack{\varpi  \in {\rm ONB}(L^2_{{\rm cusp}}(X(q))\\ \| \mu_{\varpi} \| \leq M)}}Z^{2\sigma_{\varpi,v}} \ll (qZ)^{\epsilon}\mathcal{V}_q.\nonumber
\end{equation}
The statement of Theorem~\ref{th_gl3} directly follows.

Finally, we turn towards Theorem~\ref{th_cocomp}. Using Proposition~\ref{main_ing_coc} and Corollary~\ref{cor_disc}, we obtain
\begin{equation}
  N_v(\sigma,\mathcal{F}_{\Gamma_{\mathcal{O}}(q)}(M)) \leq N_v(\sigma,\mathcal{F}_{\Gamma(qD_{\mathcal{O}}),\mathrm{disc}}(M)) \ll   M^K \cdot [\mathrm{SL}_n(\mathbb{Z})\colon \Gamma(qD_{\mathcal{O}})]^{1-\frac{2\sigma}{n-1}+\epsilon}.\nonumber
\end{equation}
But this is all we need, since $[\mathrm{SL}_n(\mathbb{Z})\colon \Gamma(qD_{\mathcal{O}})] \asymp_{\mathcal{O}} [\Gamma_{\mathcal{O}}\colon\Gamma_{\mathcal{O}}(q)].$ \\

\textbf{Acknowledgement: }The first author would like to thank Jessica Fintzen and David Schwein for useful disccussions concerning the theory of supercuspidal representations. 


\end{document}